\theoremstyle{plain}
\newtheorem{theorem}{Theorem}[section]
\newtheorem{thm}{Theorem}[section]
\newtheorem{lemma}[thm]{Lemma}
\newtheorem{corollary}[thm]{Corollary}
\newtheorem{corollary*}[thm]{Corollary*}
\newtheorem{proposition}[thm]{Proposition}
\newtheorem{proposition*}[thm]{Proposition*}
\newtheorem{open problem}[thm]{Open Problem}
\newtheorem{remark}[thm]{Remark}
\theoremstyle{definition}
\newtheorem{defn-thm}[thm]{Definition-Theorem}
\numberwithin{equation}{section}
\title[Surfaces pinched by normal curvature for mean curvature flow ]
{ Surfaces pinched by normal curvature for mean curvature flow in space forms }
\author{Dong Pu}
\address{Center of Mathematical Sciences, Zhejiang University,
             Hangzhou,  310027, People's Republic of China}
\email{pudong@zju.edu.cn}
\author{Jingjing Su}
\address{
Center of Mathematical Sciences, Zhejiang University,
             Hangzhou,  310027, People's Republic of China}
\email{sujj@zju.edu.cn}
\author{Hongwei Xu}
\address{Center of Mathematical Sciences, Zhejiang University,
             Hangzhou,  310027, People's Republic of China}
\email{xuhw@zju.edu.cn}
\thanks{Research supported by the National Natural Science Foundation of China, Grant Nos. 11531012, 11371315.}
\keywords{surface, mean curvature flow, curvature pinching, space form}
\subjclass[2010]{53C44}
\begin{document}


\begin{abstract} In this paper, we investigate the mean curvature flow of compact surfaces in $4$-dimensional space forms. We prove the convergence theorems for the mean curvature flow under certain pinching conditions involving the normal curvature, which generalise Baker-Nguyen's convergence theorem.
\end{abstract}
 \maketitle

\section{Introduction}
Let $M$ be an $n$-dimensional compact submanifold isometrically
immersed in a Riemannian manifold $N^{n+d}$. Denoted by $F_0$ the
isometric immersion. We consider the one-parameter family
$F_{t}=F(\cdot,t)$ of immersions $F_{t}:M\rightarrow N^{n+d}$ with
corresponding images $M_{t}=F_{t}(M)$ which satisfies
\begin{equation}
  \left\{ \begin{array}{l}
\frac{\partial}{\partial t}F(x,t)=H(x,t),\\
F(x,0)=F_0(x),
\end{array} \right.
\end{equation}
where $H(x,t)$ is the mean curvature vector of $M_t$. We call
$F_{t}:M\rightarrow N^{n+d}$ the mean curvature flow with initial
value $F_0$. In 1980's,  Huisken proved the convergence theorems of
the mean curvature flow of hypersurfaces under certain conditions in
a series of papers.
 For the initial hypersurfaces satisfying the convexity condition, Huisken \cite{H1,H2} proved that the
solution of the mean curvature flow converges to a point as the time
approaches the finite maximal time. Motivated by the rigidity theorem for hypersurfaces with constant mean curvature in spheres due to Okumura \cite{Ok}, Huisken \cite{H3}
proved that the mean curvature flow of  hypersurfaces in the unit
sphere $\mathbb{S}^{n+1}$ under a pointwise curvature pinching condition
either converges to a round point in finite time, or
converges to a total geodesic sphere of $\mathbb{S}^{n+1}$ as
$t\rightarrow\infty$.

During the past three decades, there are some important progresses
on the theory of mean curvature flows of hypersurfaces. On the other
hand, a few striking results for mean curvature flows of higher
codimension were obtained by several geometers. For instance,
Chen-Li \cite{CL} and Wang \cite{W1} studied the mean curvature flow
of surfaces in a four dimensional K$\ddot{\rm{a}}$hler-Einstein
manifold. For the mean curvature flow of arbitrary codimension in
the Euclidean space, Andrews and Baker \cite{AB} considered the
convergence problem under the curvature pinching condition for
$|H|^2$ and the squared norm of the second fundamental form $|A|^2$.
They proved the following convergence theorem.
\begin{theorem}\label{AB}
 Let $F_0 :M^{n} \rightarrow \mathbb{R}^{n+d}$ be a smooth compact
submanifold with $|H|_{min}>0$ in a Euclidean space.
Assume $M$ satisfies
\begin{eqnarray}
\label{pinch-cond}
|A|^2\leq\begin{cases}
            \frac{4}{3n}|H|^2, \ &n = 2, 3, \\
            \frac{1}{n-1}|H|^2, \ &n \geq 4.
        \end{cases}
\end{eqnarray}
Then the mean curvature flow with the initial value $F_0$ converges to
a round point in finite time.
\end{theorem}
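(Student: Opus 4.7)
The plan is to adapt Huisken's hypersurface scheme (\cite{H1,H2}) to arbitrary codimension. The novelty is that the second fundamental form $A$ is now vector-valued and the normal connection is nontrivial, so the quartic reaction terms in the evolution of $|A|^2$ acquire extra contributions built from commutators $\{A_\alpha,A_\beta\}$ of the shape operators; the constants in (\ref{pinch-cond}) are calibrated precisely to absorb these. The overall strategy has four stages: evolution equations, preservation of pinching, pinching improvement via iteration, and rescaling to a round point.

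First I would record the evolution equations under mean curvature flow in $\mathbb{R}^{n+d}$:
\[
\tfrac{\partial}{\partial t}|A|^2 = \Delta|A|^2 - 2|\nabla A|^2 + 2R_1, \qquad \tfrac{\partial}{\partial t}|H|^2 = \Delta|H|^2 - 2|\nabla H|^2 + 2R_2,
\]
where $R_1,R_2$ are quartic reaction terms involving traces of products of the shape operators. The first key step is to show the pinching function $Q := |A|^2 - c_n|H|^2$, with $c_n = \tfrac{4}{3n}$ for $n=2,3$ and $c_n = \tfrac{1}{n-1}$ for $n\geq 4$, remains non-positive along the flow. This reduces to the algebraic inequality $R_1 - c_n R_2 \leq 0$ whenever $Q\leq 0$, together with a Kato-type gradient inequality ensuring the combination $|\nabla A|^2 - c_n|\nabla H|^2$ is favourable. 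Verifying the algebraic inequality is where one decomposes $A$ along the $H$-direction in the normal bundle and applies sharp trace estimates on the traceless parts; this decomposition, combined with a stronger low-dimensional trace identity, is what produces the split in pinching constants between $n=2,3$ and $n\geq 4$. The parabolic maximum principle then yields preservation.

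Next I would improve the pinching. Introducing $f_\sigma = (|A|^2 - \tfrac{1}{n}|H|^2)|H|^{-2+2\sigma}$ for small $\sigma>0$, so that $f_0$ measures the deviation of $A$ from a pure multiple of $g\otimes H$, one computes that $f_\sigma$ satisfies a parabolic inequality in which the zero-order term has the right sign for small $\sigma$. A Stampacchia iteration on the superlevel sets $\{f_\sigma>k\}$, combined with the Michael--Simon Sobolev inequality for submanifolds of $\mathbb{R}^{n+d}$, yields $L^p$ bounds for each $p$ and finally an $L^\infty$ bound $f_\sigma \leq C$. Pointwise this gives $|A|^2 - \tfrac{1}{n}|H|^2 \leq C|H|^{2-2\sigma}$, so the surface becomes asymptotically umbilic with all shape operators collinear in the normal bundle.

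Finally I would prove gradient estimates of the form $|\nabla A|^2 \leq \varepsilon|H|^4 + C(\varepsilon)$ by the same technique, rescale so $\max|H|^2 = 1$ at each time, and extract a smooth subsequential limit. The pinching estimates force any such limit to be totally umbilic with constant mean curvature, hence a round sphere in a fixed affine subspace; this yields convergence of the original flow to a round point in finite time. The hardest step, and the one that is genuinely new relative to the hypersurface case, is the algebraic lemma bounding $R_1 - c_nR_2$: the extra terms coming from the nonflat normal bundle must be dominated by the traceless part of $A$, and it is the sharp form of this estimate that dictates the dimension-dependent constants in (\ref{pinch-cond}).
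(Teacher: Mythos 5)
Theorem~\ref{AB} is quoted from Andrews--Baker \cite{AB} and is not reproved in this paper; it serves as background. Your outline faithfully reproduces the Andrews--Baker strategy (evolution equations; preservation of $|A|^2 - c_n|H|^2 \leq 0$ via a maximum principle argument combining the gradient estimate $|\nabla A|^2 \geq \tfrac{3}{n+2}|\nabla H|^2$ with an algebraic bound on the quartic reaction terms under the decomposition of $A$ along and orthogonal to $H$ in the normal bundle; pinching improvement by Stampacchia iteration using the Michael--Simon Sobolev inequality; gradient estimate and rescaling), and this is also precisely the blueprint the present paper adapts — with the extra normal-curvature terms $2\gamma|K^\perp|$ inserted into the pinching quantity and $f_\sigma$ — to prove its own Theorems~\ref{main2} and~\ref{main}. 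One caveat worth flagging: the key reaction-term estimate is not literally $R_1 - c_n R_2 \leq 0$ pointwise; in Andrews--Baker one retains a portion of the favourable gradient term and the algebraic lemma yields a strict negative upper bound for the whole right-hand side of $\partial_t Q$ at a first touching point, so the inequality you want is best stated for the full evolution of $Q$ rather than for the reaction terms in isolation.
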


 In \cite{B}, Baker investigated the mean curvature flow of arbitrary codimension in spheres and obtained the following result.
\begin{theorem}\label{AB2}
Let $F_0 :M^{n} \rightarrow \mathbb{S}^{n+d}(\frac{1}{\sqrt{\bar K}})$ be a smooth compact
submanifold in a sphere with constant curvature $\bar K$.
Assume $M$ satisfies
\begin{equation}
|A|^2\leq\begin{cases}
            \frac{4}{3n}|H|^2+\frac{2(n-1)}{3}\bar K, \ &n = 2,3, \\
            \frac{1}{n-1}|H|^2+2\bar K, \ &n \geq 4.
        \end{cases}
\end{equation}
Then
  the mean curvature flow with the initial value $F_0$
either converges to a round point in finite time, or
converges to a total geodesic sphere of $\mathbb{S}^{n+d}(\frac{1}{\sqrt{\bar K}})$ as
$t\rightarrow\infty$.
\end{theorem}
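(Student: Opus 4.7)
The plan is to adapt the Andrews--Baker strategy from Theorem~\ref{AB} to the spherical ambient, with the ambient curvature $\bar K$ entering the evolution equations as a forcing term that shifts all the relevant pinching thresholds by a constant. First I would derive the evolution equations for the basic curvature quantities along the flow: $\partial_t|H|^2 = \Delta|H|^2 - 2|\nabla H|^2 + 2|\langle H, A_{ij}\rangle|^2 + 2\bar K n |H|^2$ and a parallel equation for $|A|^2$ in which the reaction terms are the usual Simons-type quadratic expression $R_1(A) - R_2(A)$ plus extra lower-order terms proportional to $\bar K$ (coming from the curvature of $\mathbb{S}^{n+d}(1/\sqrt{\bar K})$ via the Gauss, Codazzi, and Ricci equations). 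The signs of these extra $\bar K$ terms are exactly what forces the shift $+2(n-1)\bar K/3$ or $+2\bar K$ in the pinching condition.

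Next I would prove that the pinching inequality is preserved. Setting $Q := |A|^2 - c_n|H|^2 - d_n\bar K$ with $(c_n,d_n)$ the constants from the hypothesis, I compute $(\partial_t - \Delta)Q$. The bad gradient terms combine into a nonpositive expression of the form $-2(|\nabla A|^2 - c_n|\nabla H|^2)$, whose nonpositivity under the pinching constant $c_n$ is precisely the algebraic gradient inequality established in \cite{AB} and recalled in \cite{B}; it is unchanged by $\bar K$. The reaction terms rearrange to $2c_n(R_1 - R_2 - \text{quadratic in }A)$ plus the $\bar K$ corrections, and one checks by a direct algebraic inequality that this is nonpositive whenever $Q \le 0$. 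The maximum principle then preserves $\{Q\le 0\}$.

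The heart of the argument is the pinching improvement, which says that the traceless second fundamental form $\mathring A = A - \frac1n H\otimes g$ satisfies $|\mathring A|^2 \le C_\varepsilon(|H|^2 + \bar K)^{1-\varepsilon} \cdot(|H|^2+\bar K)^\varepsilon \cdot \text{(small)}$; more precisely, one sets $f_\sigma = |\mathring A|^2/(|H|^2 + \beta\bar K)^{1-\sigma}$ for a suitable $\beta>0$ keeping the denominator bounded away from zero (this is where $\bar K$ is essential --- in Euclidean space one needs $|H|_{\min}>0$, but on the sphere the term $\bar K$ plays that role), and runs a Stampacchia iteration on $f_\sigma$, integrating its evolution equation against $(f_\sigma - k)^p_+$ and combining with a Michael--Simon--Sobolev inequality on $M_t$ in $\mathbb{S}^{n+d}$. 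The iteration yields $\sup_M f_\sigma \to 0$ along any sequence of times. I expect this step to be the main obstacle, since the Sobolev inequality in the spherical background and the bookkeeping of the $\bar K$-corrections in every reaction term require care, and the constants in $(2)$ for $n=2,3$ versus $n\ge 4$ come from optimising precisely these estimates.

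Finally I would close with the standard dichotomy. If $|H|_{\max}(t)$ blows up at some finite $T<\infty$, then the improved pinching plus a Huisken-type rescaling $\tilde F = (T-t)^{-1/2}F$ produces a smooth limit that is totally umbilic with constant mean curvature, hence a round sphere; Hamilton's compactness combined with the uniform curvature derivative estimates (obtained by interpolation from a Bernstein-type bound on $|\nabla A|$) upgrades this to $C^\infty$ convergence, giving convergence to a round point. If instead $|H|$ stays bounded for all time, the pinching improvement forces $|A|^2 \to 0$ (in the $n\ge 4$ case, using $|\mathring A|^2 \to 0$ together with the upper bound on $|H|^2$ one obtains from the inequality $|A|^2 \ge \frac{1}{n}|H|^2$ combined with the preserved pinching), and an ODE comparison on $|A|^2_{\max}$ shows exponential decay to zero; the flow exists for all time and the $M_t$ converge smoothly to a totally geodesic $\mathbb{S}^n \subset \mathbb{S}^{n+d}(1/\sqrt{\bar K})$.
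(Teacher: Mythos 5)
This theorem is attributed in the paper to Baker~\cite{B} and is stated as background only; the paper does not reprove it. The paper's own convergence arguments (Sections~3--5, Propositions~\ref{baochi}, \ref{san1}, Lemmas~\ref{yanhua1}, \ref{Poincare1}) adapt precisely the same machinery to the specific case $n=d=2$ with the extra normal-curvature term, and indeed when $k=\tfrac23$ the paper's Theorem~\ref{main2} reduces to the $n=2$, $d=2$ case of Theorem~\ref{AB2}. So the relevant comparison is with Baker's proof in \cite{B} and with the paper's analogous proof for $n=2$.

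Your outline matches that strategy step for step: evolution equations with a $\bar K$-forcing term (Lemma~\ref{ee}); preservation of the pinching cone by the maximum principle, using the gradient inequality $|\nabla A|^2\ge\tfrac{3}{n+2}|\nabla H|^2$ inherited from \cite{AB}; a Stampacchia iteration on $f_\sigma=|\mathring A|^2/(\alpha|H|^2+\beta\bar K)^{1-\sigma}$ driven by the contracted Simons identity and a Michael--Simon Sobolev inequality; a Huisken-type gradient estimate; and a finite-time versus infinite-time dichotomy. You are also right that the constant $\beta\bar K>0$ in the denominator is exactly what replaces the Euclidean hypothesis $|H|_{\min}>0$.

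The one step that does not work as you have written it is the infinite-time branch. You argue that ``the upper bound on $|H|^2$ one obtains from $|A|^2\ge\tfrac1n|H|^2$ combined with the preserved pinching'' forces $|H|^2$ (hence $|A|^2$) to zero. For $\bar K>0$ that combination reads $\tfrac1n|H|^2\le\tfrac1{n-1}|H|^2+2\bar K$, which is vacuously true and yields no bound at all; and even if $|H|$ were a priori bounded, that alone would not make $|H|\to 0$. The correct argument, as in \cite{B} and \cite{H3}, has two separate ingredients: (i) if $T=\infty$ then $\max_{M_t}|H|$ must stay bounded, because unbounded $|H|$ together with the gradient and pinching estimates would force a finite-time singularity; (ii) the Stampacchia improvement gives $|\mathring A|^2\le C(|H|^2+\bar K)^{1-\sigma}e^{-\delta t}\to 0$, and then an ODE comparison on $|H|^2_{\max}$ (using the evolution equation for $|H|^2$ and the smallness of $|\mathring A|$ to control the reaction term) shows $|H|\to 0$. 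Only then does $|A|^2\to 0$ and $M_t\to$ a totally geodesic $\mathbb{S}^n$. Replace your circular inequality with this two-step argument.

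A minor secondary point: the decay estimate on $|\mathring A|^2$ in the spherical case is of the form $C(|H|^2+\bar K)^{1-\sigma}e^{-\delta t}$ for all time (as in Proposition~\ref{san1} of the paper), not merely along a sequence of times; the exponential prefactor $e^{-\delta t}$ is what makes the infinite-time conclusion work, and you should state it that way.
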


Afterwards, Lei-Xu \cite{LX2} obtained a sharp convergence theorem for mean curvature flow of higher codimension in spheres. Set
\begin{equation}
a(x) = n \bar K+ \frac{n}{2 ( n-1 )} x -
\frac{n-2}{2
   ( n-1 )} \sqrt{x^{2} +4 ( n-1 ) \bar K x},
\end{equation} they proved the following theorem.
\begin{theorem}\label{LX2}
Let $F_0 :M^{n} \rightarrow \mathbb{S}^{n+d}(\frac{1}{\sqrt{\bar K}})$ be an n-dimensional $(n\geq6)$ smooth compact
submanifold in a sphere with constant curvature $\bar K$.
Assume $M$ satisfies
\begin{equation}
|A|^2< c(n,|H|,\bar K).
\end{equation}
Then
  the mean curvature flow with the initial value $F_0$
either converges to a round point in finite time, or
converges to a total geodesic sphere of $\mathbb{S}^{n+d}(\frac{1}{\sqrt{\bar K}})$ as
$t\rightarrow\infty$.

Here $c(n,|H|,\bar K)$ is an explicit positive scalar defined by
$$c(n,|H|,\bar K)=\min\{a(|H|^2), b(|H|^2) \},$$
where
\begin{eqnarray*}
  b(x) &=&  a(x_0)+ a'(x_0)(x-x_0)+\frac12a''(x_0)(x-x_0)^2, \\
  x_0 &=& \frac{2n+2}{n-4}\sqrt{n-1}(\sqrt{n-1}-\frac{n-4}{2n+2})^2\bar K.
\end{eqnarray*}
\end{theorem}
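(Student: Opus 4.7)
The plan is to follow the general scheme established by Huisken, Andrews--Baker and Baker for mean curvature flow with pinched second fundamental form, now adapted to the sharper pinching function $c(n,|H|,\bar K) = \min\{a(|H|^2), b(|H|^2)\}$. The function $a$ is chosen so that a codimension-independent reaction inequality closes; it is essentially the classical Li--Li/Chen--Xu/Santos rigidity pinching for submanifolds of constant mean curvature in spheres. The quadratic patch $b$ is the degree-two Taylor polynomial of $a$ at the threshold $x_0$; it is inserted because $a$ itself fails the required algebraic properties beyond $x_0$, whereas the quadratic $b$ is globally tractable and agrees with $a$ to second order at $x_0$.

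The critical step is to show that the pinching $|A|^2 < c(n,|H|,\bar K)$ is preserved along the flow. Compute the evolution equations
$$\partial_t |H|^2 = \Delta |H|^2 - 2|\nabla H|^2 + 2|\langle A, H\rangle|^2 + 2n\bar K |H|^2,$$
and the analogous equation for $|A|^2$ with reaction quartics $R_1, R_2$ (squared norms of $\sum_\alpha A^\alpha A^\alpha$ and the commutators $[A^\alpha,A^\beta]$) as in Andrews--Baker. Introduce $f := |A|^2 - c(|H|^2) + \varepsilon e^{Lt}$ and, by the parabolic maximum principle, at a putative first spatial zero of $f$ one must verify the pointwise algebraic inequality $R_1 + R_2 \le c'(|H|^2)\,|\langle A, H\rangle|^2 + (\text{gradient and ambient terms})$. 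This is carried out piecewise: on $\{|H|^2 \le x_0\}$ one uses the defining quadratic identity satisfied by $a$ (the same identity that governs Chen--Xu/Santos rigidity); on $\{|H|^2 > x_0\}$ one uses that $b$ is an explicit quadratic, with the matching conditions $a^{(k)}(x_0)=b^{(k)}(x_0)$ for $k=0,1,2$ ensuring that the argument transitions continuously across $x_0$.

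With preservation of the pinching in hand, the remaining arguments follow the Huisken--Andrews--Baker template. Stampacchia $L^p$-iteration upgrades the pointwise pinching to $|A|^2 - \tfrac{1}{n}|H|^2 - n\bar K \le C|H|^{2-\eta}$ for some $\eta>0$, and standard parabolic arguments then yield the gradient estimates $|\nabla A|^2 \le \varepsilon|H|^4 + C(\varepsilon)$. The customary dichotomy applies: if $\max_{M_t}|H| \to \infty$ at a finite time $T$, parabolic rescaling together with the improved pinching extracts a round sphere as the blow-up limit, so $M_t$ contracts to a round point; otherwise the flow exists for all $t \ge 0$ with $|H|$ uniformly bounded, and the improved pinching forces $M_t$ to converge smoothly to a totally geodesic $n$-subsphere of $\mathbb{S}^{n+d}(1/\sqrt{\bar K})$.

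The principal obstacle is the algebraic reaction inequality of Step~1, specifically its behaviour at and beyond the junction $|H|^2 = x_0$. The explicit value of $x_0$ is chosen to optimise the crossover: below $x_0$ the concavity and the defining ODE of $a$ yield the desired estimate, while above $x_0$ one must trade $a$ for $b$ and exploit that $b$ is quadratic with known coefficients. The dimension hypothesis $n\ge 6$ enters precisely here, as it is the threshold at which the resulting quadratic reaction inequality for $b$ remains favourable uniformly in $|H|$, so that the maximum-principle argument closes across all scales of $|H|$ compatible with the initial pinching.
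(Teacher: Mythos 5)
This statement is Theorem~\ref{LX2}, which the paper quotes verbatim from Lei--Xu \cite{LX2} as background and motivation; the paper does \emph{not} prove it. There is therefore no internal proof for your attempt to be compared against --- the authors simply cite the result and move on to their own surface-pinching theorems (Theorems~\ref{main2} and~\ref{main}), whose proofs occupy Sections~3--5.

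As a freestanding outline your proposal follows the recognizable Huisken/Andrews--Baker/Baker template, which is indeed the scheme Lei--Xu use, so the architecture is plausible. But you have parked the entire difficulty in one sentence. The real content of the Lei--Xu theorem is the pointwise reaction-term inequality at a first interior zero of $|A|^2 - c(|H|^2)$, where $c = \min\{a,b\}$ is only $C^{1,1}$ and where one must show the competing quartics $R_1$ and $R_2$ close favorably against the (branch-dependent) derivative $c'$. Your appeal to ``the defining quadratic identity satisfied by $a$'' and to ``matching conditions $a^{(k)}(x_0)=b^{(k)}(x_0)$'' does not supply that: the matching is automatic by the definition of $b$ as a Taylor polynomial and by itself proves nothing, and the proposal gives no indication of \emph{why} the quadratic branch $b$ satisfies the reaction inequality for $|H|^2 > x_0$, nor where $n \ge 6$ enters beyond an assertion that it does. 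In short, the proposal is a plausible roadmap but omits the computations that make the theorem a theorem; none of those can be recovered from the present paper, which only states the result.
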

The scalar $c(n,|H|,\bar K)$ in Theorem \ref{LX2} satisfies the following: (i) $c(n,|H|,\bar K)>\frac{1}{n-1}|H|^2+2\bar K$; (ii) $c(n,|H|,\bar K)>\frac76\sqrt{n-1}\bar K$; (iii) $c(n,|H|,\bar K)=a(|H|^2)$ when $|H|^2\geq x_0$.
For mean curvature flow of surfaces in hyperbolic spaces,  Liu-Xu-Ye-Zhao \cite{LXYZ} proved the following convergence theorem.
\begin{theorem}\label{A}
Let $F_0: M^{n}\rightarrow \mathbb{H}^{n+d}(\bar K)$ be a smooth compact
submanifold in a hyperbolic space with constant curvature $\bar K$.
Assume $M$ satisfies
\begin{eqnarray}
\label{pinch-cond}
|A|^2\leq\begin{cases}
            \frac{4}{3n}|H|^2+\frac{n}{2}\bar K, \ &n = 2, 3, \\
            \frac{1}{n-1}|H|^2+2\bar K, \ &n \geq 4.
        \end{cases}
\end{eqnarray}
Then the mean curvature flow with the initial value $F_0$ converges to
a round point in finite time.
\end{theorem}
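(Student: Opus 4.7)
The plan is to follow the Andrews-Baker/Baker framework (Theorems~\ref{AB} and~\ref{AB2}) adapted to a negatively curved ambient space. A key preliminary observation is that the hypothesis here automatically enforces a positive lower bound on $|H|$: combining the pinching with the universal inequality $|A|^2\ge |H|^2/n$ yields, for example, $|H|^2\ge -2n(n-1)\bar K>0$ when $n\ge 4$, and an analogous bound for $n=2,3$. Hence the totally geodesic alternative of Theorem~\ref{AB2} cannot occur, and the only possible long-time behaviour is a finite-time singularity. This also means that the $\bar K$-dependent corrections in the evolution equations will enter perturbatively, with signs that cooperate with the pinching because $\bar K<0$.

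The first step is to derive the standard evolution equations along MCF in $\mathbb{H}^{n+d}(\bar K)$ and prove preservation of the pinching. Setting $Q = |A|^2 - \alpha|H|^2 - \beta\bar K$ for the relevant constants $(\alpha,\beta)$ from the statement, one computes $(\partial_t - \Delta)Q$ and estimates the quartic curvature terms exactly as in Andrews-Baker~\cite{AB} (the Li-Li-type inequality when $n\ge 4$, the refined low-dimensional argument when $n=2,3$). The extra contributions involving $\bar K$ are nonpositive multiples of $Q$ plus terms of favourable sign, so Hamilton's tensor maximum principle yields that $Q\le 0$ is preserved along the flow.

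The main technical obstacle is the pinching \emph{improvement}. Following the contracted-Bianchi strategy of Huisken extended to higher codimension by Andrews-Baker, I would introduce $f_\sigma = (|A|^2-\alpha|H|^2-\beta\bar K)\,|H|^{-2(1-\sigma)}$ for small $\sigma>0$, derive a reaction-diffusion inequality of Huisken-Sinestrari type, and run a Stampacchia iteration on $\int_{M_t}(f_\sigma-k)_+^p\,d\mu_t$, using a Michael-Simon-type Sobolev inequality to close the $L^p$ estimates. The positive lower bound on $|H|$ is essential to keep $f_\sigma$ well defined and to absorb the $\bar K$-corrections generated when differentiating the denominator. Making the Sobolev constant effectively independent of $\bar K$ requires some care, but can be arranged by localising in small geodesic balls, as in Baker~\cite{B}. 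I expect this to be the step demanding most of the work.

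Once $|\mathring A|^2/|H|^{2(1-\sigma)}\to 0$ in $L^\infty$, the flowing submanifold becomes asymptotically umbilic. Because $|H|$ is uniformly bounded below, the volume must decay to zero in finite time $T$, so $|H|_{\max}\to\infty$ at $T$. Parabolic rescaling at a singular point, together with $(T-t)\bar K\to 0$, produces a blowup limit that is a mean curvature flow in $\mathbb R^{n+d}$ on which the improved pinching persists; the limit must therefore be a self-shrinking round sphere, and Huisken's monotonicity formula yields convergence of the original flow to a round point, completing the proof.
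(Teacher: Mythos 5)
You should first note that the paper does not actually prove Theorem~\ref{A}; it is quoted from Liu--Xu--Ye--Zhao~\cite{LXYZ}, and the paper only proves the surface-with-normal-curvature analogue (Theorem~\ref{main}) in Section~5, relying on \cite{LXYZ} for the gradient estimate and the finite-time-extinction lemma. That said, your general plan --- preservation of pinching by the maximum principle, a Stampacchia iteration on a normalised traceless quantity, a gradient estimate, and a rescaling/blow-up argument --- is indeed the route taken in \cite{LXYZ} and mirrored in Section~5 of the present paper.

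Two concrete steps in your sketch are wrong, however. First, the quantity you propose to iterate, $f_\sigma = (|A|^2-\alpha|H|^2-\beta\bar K)\,|H|^{-2(1-\sigma)}$, is nonpositive by the preserved pinching, so bounding $\int(f_\sigma-k)_+^p$ is vacuous and gives no pinching improvement. The correct quantity (used both in \cite{LXYZ} and in the paper's $\tilde f_\sigma$) has the traceless second fundamental form in the numerator: $\tilde f_\sigma=|\mathring A|^2/|H|^{2(1-\sigma)}$, which the pinching bounds by roughly $C|H|^{2\sigma}$ and which is \emph{not} a priori bounded; the Stampacchia iteration is precisely what upgrades this to the uniform estimate $|\mathring A|^2\le C|H|^{2-\delta}$. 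Second, your justification that the flow terminates in finite time is incorrect: a uniform lower bound $|H|^2\ge c_0>0$ only gives $\frac{d}{dt}\mathrm{Vol}(M_t)\le -c_0\,\mathrm{Vol}(M_t)$, i.e.\ exponential decay, not finite-time extinction. The actual mechanism (Lemma~4.5 of \cite{LXYZ}, cited in the paper) is an ODE comparison for $|H|^2_{\min}$: from $\partial_t|H|^2\ge\Delta|H|^2+\tfrac2n|H|^4+2n\bar K|H|^2$ and the pinching-induced strict inequality $|H|^2_{\min}>n^2|\bar K|$, the maximum principle forces $|H|^2_{\min}$ to blow up in finite time, whence $T<\infty$. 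Finally, a small remark: you need not localise the Sobolev inequality in geodesic balls; the Hoffman--Spruck (Michael--Simon type) inequality for submanifolds of a Cartan--Hadamard ambient applies directly, and this is what \cite{LXYZ} and the paper use. With the numerator of $\tilde f_\sigma$ corrected and the finite-time argument replaced by the $|H|^2_{\min}$ ODE, your outline matches the cited proof.
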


Recently, Lei-Xu \cite{LX} proved a convergence theorem of arbitrary codimension in hyperbolic spaces that the initial submanifold $M^n$ of
dimension $n(\ge6)$ under the optimal pinching condition.
\begin{theorem}\label{LX1}
Let $F_0: M^{n}\rightarrow \mathbb{H}^{n+d}(\bar K)$ be an n-dimensional $(n\geq6)$ smooth complete
submanifold in a hyperbolic space with constant curvature $\bar K$.
Assume $M$ satisfies
\begin{eqnarray}
\underset{M}{\sup}\big(|A|^2- a(|H|^2)\big)<0, \ where \ |H|^2+n^2 \bar K>0.
\end{eqnarray}
Then the mean curvature flow with the initial value $F_0$ converges to
a round point in finite time.
\end{theorem}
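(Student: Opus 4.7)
The plan is to follow the Huisken--Hamilton maximum-principle framework, as extended to higher codimension by Andrews--Baker \cite{AB} and Baker \cite{B}, modified in an essential way to accommodate the hyperbolic ambient space and the nonlinear pinching function $a(|H|^2)$. First I would derive the standard parabolic evolution equations along the flow for the induced metric, the second fundamental form, the mean curvature vector, and for the scalars $|A|^2$ and $|H|^2$, carefully tracking the $\bar K$--contributions arising in $\mathbb{H}^{n+d}(\bar K)$. Combining these yields an identity of the form $\partial_t Q = \Delta Q - 2\bigl(|\nabla A|^2 - a'(|H|^2)|\nabla H|^2\bigr) + \mathcal R(Q)$, where $Q:=|A|^2 - a(|H|^2)$ and $\mathcal R$ collects all reaction terms.

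The heart of the argument, and the main obstacle, is preserving the strict pinching $Q \le -\epsilon_0 < 0$ along the flow. I would invoke the parabolic maximum principle applied to $Q$; at a first space--time point where $Q$ reaches $0$, one has $\nabla|A|^2 = a'(|H|^2)\nabla|H|^2$, and one must show that $\mathcal R(Q)|_{Q=0}$ is strictly negative. This is the step where the specific choice of $a$ is crucial: an Okumura-type algebraic inequality for the traceless second fundamental form, together with the explicit formulas for $a'(x)$ and $a''(x)$, should collapse $\mathcal R|_{Q=0}$ into a manifestly negative expression under the running assumption $|H|^2 + n^2\bar K > 0$. The dimension hypothesis $n\ge 6$ should enter through the sign of certain discriminants in this reaction estimate and through the Kato-type inequality giving $|\nabla A|^2 - a'|\nabla H|^2 \ge 0$, which is needed to absorb the cross terms in the spirit of \cite{H3}. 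A parallel maximum-principle argument shows that $|H|^2 + n^2\bar K > 0$ persists, so the flow never enters the degenerate regime where $a$ ceases to be real-valued.

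Once pinching is preserved, Step 3 (improvement of pinching) proceeds along the blueprint of \cite{H3,AB,B}: for small $\sigma>0$, establish an $L^p$ estimate for the rescaled defect $Q_\sigma := Q_+ / |H|^{2-\sigma}$ by integrating its evolution equation against $Q_\sigma^{p-1}$, then upgrade to an $L^\infty$ bound via Stampacchia/Moser iteration, forcing the traceless part of $A$ to be negligible relative to $|H|$. Finally, Shi-type estimates for derivatives of $A$ together with a contradiction argument produce a finite extinction time $T<\infty$; a parabolic rescaling about $T$, using the improved pinching, identifies the blow-up limit as a shrinking round sphere, i.e.\ a round point. The main technical difficulty is clearly the reaction calculation in the preservation step: once the calibration of $a$ inside $\mathcal R(Q)$ is verified, the remaining machinery is a hyperbolic adaptation of the higher-codimension program of \cite{B, LXYZ}.
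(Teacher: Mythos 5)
The theorem you are attempting to prove is not actually proved in this paper. It is Theorem \ref{LX1}, which the authors state in the introduction solely as background: it is a previously established result of Lei and Xu, cited as \cite{LX} (``An optimal convergence theorem for mean curvature flow of arbitrary codimension in hyperbolic spaces''). The present paper's own contributions are Theorems \ref{main2} and \ref{main}, which concern codimension-two surfaces in $\mathbb{S}^4$ and $\mathbb{H}^4$ under a pinching condition involving the normal curvature $|K^\perp|$, not the nonlinear Okumura-type envelope $a(|H|^2)$. There is therefore no proof here against which your attempt can be checked line by line.

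As a standalone assessment, your outline does track the Huisken--Andrews--Baker--Lei--Xu framework that \cite{LX} follows (preservation of pinching by the parabolic maximum principle, improvement of pinching by Stampacchia iteration on a weighted defect, gradient estimates, and blow-up analysis), but several technical points are underdeveloped. Because $a$ is nonlinear, the evolution of $Q=|A|^2-a(|H|^2)$ carries the gradient terms
\begin{equation*}
-2\bigl(|\nabla A|^2 - a'(|H|^2)|\nabla H|^2\bigr) + 2a''(|H|^2)\bigl|\nabla|H|^2\bigr|^2,
\end{equation*}
so one must also control the $a''$ contribution; your sketch drops it. The Kato-type sign $|\nabla A|^2 - a'(|H|^2)|\nabla H|^2 \ge 0$ requires an explicit bound on $a'$ of the form $a'(x)\le \tfrac{3}{n+2}$ (or a comparable constant), which is where part of the $n\ge 6$ hypothesis actually enters; you assert this without exhibiting the bound. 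The Stampacchia test function in the hyperbolic setting is naturally built with denominator $(|H|^2 + n^2\bar K)^{1-\sigma}$ rather than $|H|^{2(1-\sigma)}$, precisely because $a$ is only real-valued on that half-line and the pinching hypothesis forces $|H|^2 + n^2\bar K > 0$. And the heart of the matter --- the strict negativity of the reaction term $\mathcal R(Q)$ at $Q=0$ under the Okumura-type inequality --- is asserted as ``should collapse'' rather than carried out; that computation is the genuinely hard content of \cite{LX} and is not reproduced, or reproducible, from what is written here.
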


Since $a(|H|^2)>\frac{1}{n-1}|H|^2+2\bar K$, the theorem above improves Theorem
\ref{A} for $n \geq 6$. The example in \cite{LX} shows that the
pinching condition in Theorem \ref{LX1} is optimal for arbitrary $n(\geq 6)$.  Note that initial submanifolds in the almost all convergence
results possess positive curvatures. The convergence theorems in Theorem \ref{LX2} and Theorem \ref{LX1} imply
that the Ricci curvatures of the initial submanifolds are positive, but don't imply
the positivity of the sectional curvatures. Therefore, the two
convergence theorems also imply the differentiable sphere theorems
for submanifolds with positive Ricci curvatures.

However, there is no optimal pinching condition for surfaces in space forms under mean curvature flow. Recently, Baker and Nguyen \cite{BN} introduced a new pinching condition evolving the normal curvature $K^\perp$ for surfaces in $\mathbb{R}^4$, and obtained
\begin{theorem}\label{C}
Let $F_0 :M \rightarrow \mathbb{R}^{4}$ be a
  compact surface with $|H|_{min}>0$. Assume $M$ satisfies
\begin{equation}|A|^2+2(1-\frac{4}{3}k)|K^\perp|\leq k|H|^2,
\end{equation}
where $ k\leq \frac{29}{40}$. Then
  the mean curvature flow with the initial value $F_0$ converges to a round point in finite time.
\end{theorem}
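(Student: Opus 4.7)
The strategy is to follow the Huisken--Andrews--Baker paradigm, adapted to the codimension-two normal-curvature pinching introduced by Baker--Nguyen: first show the pinching condition is preserved under the flow, then derive an improved (decaying) pinching estimate via a Stampacchia iteration, and finally conclude convergence to a round point by a blow-up analysis at the maximal existence time.

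I would begin by computing the evolution equations under mean curvature flow in $\mathbb{R}^4$ for $|A|^2$, $|H|^2$ and $|K^\perp|^2$. The first two are classical,
\begin{equation*}
(\partial_t - \Delta)|A|^2 = -2|\nabla A|^2 + 2R_1, \qquad (\partial_t - \Delta)|H|^2 = -2|\nabla H|^2 + 2R_2,
\end{equation*}
with reaction terms quartic in the second fundamental form. Using the Ricci identity on the normal bundle together with the formula $R^{\perp}_{ij\alpha\beta} = \sum_k (h^\alpha_{ik}h^\beta_{jk} - h^\alpha_{jk}h^\beta_{ik})$, one obtains a parallel equation $(\partial_t - \Delta)|K^\perp|^2 = -2|\nabla^\perp K^\perp|^2 + 2R_3$; since the normal bundle here has rank two, $K^\perp$ reduces to a scalar and $R_3$ can be written explicitly in a frame diagonalising one component of $A$.

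Next I would prove preservation of the pinching $Q := |A|^2 + 2(1 - \tfrac{4}{3}k)|K^\perp| - k|H|^2 \leq 0$. Because $|K^\perp|$ is not smooth on $\{K^\perp = 0\}$, the natural route is to replace it by $\sqrt{|K^\perp|^2 + \varepsilon}$, apply the parabolic maximum principle to the regularised pinching, and send $\varepsilon \downarrow 0$ at the end. At a first zero of the regularised $Q$, a Kato-type inequality absorbs part of the gradient contribution and one is reduced to a purely algebraic inequality in the three scalars $|H|^2$, $|\mathring{A}|^2$ and $|K^\perp|$, to be verified under the equality $|A|^2 + 2(1-\tfrac{4}{3}k)|K^\perp| = k|H|^2$. \textbf{This algebraic inequality is the main obstacle}: the threshold $k \leq 29/40$ should emerge precisely as the condition making the discriminant of the resulting quadratic non-positive, so the argument requires tight bookkeeping of all quartic terms, and the codimension-two identity expressing $|K^\perp|$ via the off-diagonal entries of $A$ is essential in reducing to a two-variable problem.

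With preservation in hand, the remaining steps are by now standard. I would introduce $f_\sigma = Q / |H|^{2-\sigma}$ for some small $\sigma > 0$, compute $(\partial_t - \Delta) f_\sigma$, and combine its divergence-form reaction with a Michael--Simon Sobolev inequality to run a Stampacchia iteration on the super-level sets $\{f_\sigma > \kappa\}$, yielding $\sup_M f_\sigma \to 0$ as $t \to T$. Together with scale-invariant gradient-of-$H$ estimates and the fact that the preserved pinching together with $|H|_{\min}>0$ force a finite maximal time at which $|H|$ blows up and the diameter shrinks to zero, a type-I rescaling at $T$ converges smoothly to a round shrinking sphere by the improved pinching, which gives convergence to a round point.
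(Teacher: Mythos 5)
Your overall outline---evolution equations, preservation via the maximum principle with a key quadratic-form analysis, Stampacchia iteration for an improved pinching, then blow-up---correctly mirrors the Baker--Nguyen strategy that both the original paper and this one follow. Two of your intermediate claims, however, are wrong in a way that would derail the argument.

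First, the gradient structure in the evolution of the normal curvature is not what you wrote. Computing $\partial_t R^\perp_{ij\alpha\beta}$ from $R^\perp_{ij\alpha\beta} = h_{ip\alpha}h_{jp\beta} - h_{jp\alpha}h_{ip\beta}$ together with the evolution of $h$ gives
\[
(\partial_t - \Delta)K^\perp \;=\; -2\nabla_{\!evol}K^\perp + (\text{reaction}),\qquad
\nabla_{\!evol}K^\perp = \sum_{p,q}\bigl(\nabla_q h_{1p3}\nabla_q h_{2p4} - \nabla_q h_{2p3}\nabla_q h_{1p4}\bigr),
\]
and $\nabla_{\!evol}K^\perp$ is a genuine bilinear expression in $\nabla A$, not $|\nabla^\perp K^\perp|^2$; it can take either sign and does not contract to the squared norm of a covariant derivative. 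This matters because the whole gradient absorption hinges on the Baker--Nguyen estimate $|\nabla A|^2 \geq 2\nabla_{\!evol}K^\perp$ (Proposition 4.2 in their paper, restated here as inequality (\ref{eqn_gradient3})): combined with $|\nabla H|^2 \leq \tfrac{4}{3}|\nabla A|^2$, this is exactly what makes the gradient contribution to $\partial_t Q$ nonpositive at $\gamma = 1 - \tfrac{4}{3}k$, and what supplies the strict margin needed in the iteration. Note also that one should work with $|K^\perp|$ rather than $|K^\perp|^2$: the pinching is linear in $|K^\perp|$, and the bounded factor $K^\perp/|K^\perp|$ in $\partial_t |K^\perp|$ keeps the bad gradient term at the scale of $|\nabla A|^2$; passing to $|K^\perp|^2$ introduces an extra factor of $K^\perp$ and worsens the scaling. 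Your regularization by $\sqrt{|K^\perp|^2 + \varepsilon}$ is a perfectly sensible way to justify the maximum-principle step at the non-smooth locus of $|K^\perp|$, and you should keep it, but it does not recover the missing $\nabla_{\!evol}K^\perp$ term.

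Second, the quantity you feed into the Stampacchia machine is incorrect. You set $f_\sigma = Q/|H|^{2-\sigma}$ with $Q = |A|^2 + 2\gamma|K^\perp| - k|H|^2$. But the preserved pinching says precisely $Q \leq 0$ everywhere, so $f_\sigma \leq 0$ for all time, the super-level sets $\{f_\sigma > \kappa\}$ with $\kappa > 0$ are empty, and there is nothing to iterate. The correct normalized quantity in the Euclidean case is
\[
f_\sigma = \frac{|\mathring{A}|^2 + 2\gamma|K^\perp|}{\bigl((k - \tfrac12)|H|^2\bigr)^{1-\sigma}},
\]
where one uses $|A|^2 = |\mathring{A}|^2 + \tfrac12|H|^2$; the preserved pinching then reads $f_\sigma \leq \bigl((k - \tfrac12)|H|^2\bigr)^\sigma$, a nontrivial $|H|$-dependent bound which the Poincar\'e inequality coming from Simons' identity, together with the iteration, promotes to a uniform bound. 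The remainder of your plan (gradient estimate for $H$, type-I rescaling at the singular time, improved pinching forcing roundness) is in the right spirit once these two pieces are fixed.
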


Theorem \ref{C} improves the coefficient of $|H|^2$ in Theorem \ref{AB} for codimension two surfaces from $\frac23$ to $\frac{29}{40}$ with the help of the normal curvature. Baker and Nguyen conjectured that the Clifford torus viewed as a surface of codimension two in $\mathbb{R}^{4}$ is the true obstruction to the theorem \ref{C}, corresponding to the optimal constant $k=1$. The mean curvature flow in complex projective spaces was investigated in \cite{LX3,P}. The blowup analysis of the mean curvature flow at singular time was investigated in \cite{CH,LN,LW,XYZ}. For other results and applications of the mean curvature flow, we refer the readers to \cite{H4,LX4,LXYZ1,LXZ1,M,Sm,W2,Z}.

Motivated by above theorems, we consider surfaces pinched by the normal curvature for the mean curvature flow in the simply connected space form $\mathbb{F}^{4}(\bar{K})$ with constant curvature $\bar K$. Putting
\begin{equation}\label{gamma}
  \gamma=\begin{cases}
            1-\frac{3}{2}k, \ &\frac12<k\leq \frac{2}{3}, \\
            1-\frac{4}{3}k, \ &\frac23<k\leq \frac{29}{40},
        \end{cases}
\end{equation}
we prove the following convergence theorems.

\begin{thm}\label{main2}
Let $F_0 :M \rightarrow \mathbb{S}^{4}(\frac{1}{\sqrt{\bar K}})$ be a compact surface immersed in the sphere with
constant curvature $\bar K$. If $M$ satisfies
\begin{equation}\label{cons}|A|^2+2\gamma|K^\perp| \leq k|H|^2+\beta \bar K, \ \ \beta= 4k-2,
\end{equation}
then the mean curvature flow with the initial value $F_0$  converges to a round point in finite time, or
converges to a total geodesic sphere of $\mathbb{S}^{4}(\frac{1}{\sqrt{\bar K}})$ as
$t\rightarrow\infty$.
\end{thm}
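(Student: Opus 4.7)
The plan is to adapt the Baker--Nguyen argument for $\mathbb{R}^4$ (Theorem \ref{C}) to the ambient sphere, absorbing the extra ambient-curvature contributions into the constant $\beta\bar K$ on the right of (\ref{cons}). Introducing
$$Q := |A|^2 + 2\gamma|K^\perp| - k|H|^2 - \beta\bar K,$$
the central task is to prove that $\{Q \leq 0\}$ is preserved by the flow. I would combine the standard evolution equations for $|A|^2$ and $|H|^2$ on a surface in $\mathbb{S}^4(1/\sqrt{\bar K})$ with the evolution of $|K^\perp|^2$ obtained from the Ricci equation on the normal bundle, handling the non-smoothness of $|K^\perp|$ at $\{K^\perp = 0\}$ by approximating with $(|K^\perp|^2+\varepsilon)^{1/2}$ as in \cite{BN}. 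This yields a parabolic inequality of the form $(\partial_t - \Delta)Q \leq \langle V,\nabla Q\rangle + R$, whose reaction term $R$ must be shown to be non-positive whenever $Q \leq 0$. The piecewise value of $\gamma$ in (\ref{gamma}) is precisely the constant that makes the resulting quartic algebraic inequality in the entries of the shape operators $A_3, A_4$ sharp in each subrange of $k$, while $\beta = 4k-2$ is forced by matching the linear-in-$\bar K$ terms arising from the extra sphere contribution $2|A|^2\bar K - 2|H|^2\bar K$ in the evolution of $|A|^2$; note that $\beta > 0$ throughout the range $k > 1/2$.

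Once preservation is established, I would follow the Huisken--Andrews--Baker strategy to upgrade $Q \leq 0$ to a strict pinching that vanishes faster than $|H|^2$. Using the Codazzi equation to produce a gradient bound $|\nabla H|^2 \leq c|\nabla A|^2$ and computing the evolution of
$$f_\sigma := \bigl(|A|^2 + 2\gamma|K^\perp| - k|H|^2 - \beta\bar K\bigr)|H|^{-2+2\sigma},$$
a Stampacchia iteration on $\int f_\sigma^p\,d\mu$ yields one of two alternatives: (i) a finite maximal time $T < \infty$ with $|H|_{\min}(t) \to \infty$ and $|A|^2/|H|^2 \to 1/2$ as $t \to T$, or (ii) $T = \infty$ with $|H|_{\max}(t) \to 0$. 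In case (i), rescaling by $(T-t)^{-1/2}$ and invoking the preserved pinching (the term $\beta\bar K$ becoming negligible in the blow-up limit) reduces to the Euclidean analysis of \cite{BN}, producing smooth convergence of the rescaled flow to a shrinking round $\mathbb{S}^2$ and hence contraction of $M_t$ to a round point. In case (ii), $|H|_{\max}(t) \to 0$ combined with the preserved pinching forces $|A|^2 \to 0$, and interior estimates together with an exponential-decay argument in the spirit of \cite{H3} give smooth convergence of $M_t$ to a totally geodesic $\mathbb{S}^2 \subset \mathbb{S}^4$.

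The main obstacle is the reaction-term inequality in the preservation step. Writing the two shape operators in a local frame adapted to the normal bundle as symmetric $2\times 2$ matrices, the quantities $|A|^2, |H|^2, |K^\perp|$ and the reaction $R$ all become explicit polynomials in their six entries, and one must verify that $R$ is non-positive on the semi-algebraic set $\{Q \leq 0\}$ with the specific constants of (\ref{gamma}) and $\beta = 4k-2$. The split at $k = 2/3$ reflects a change in which extremal configuration of $(A_3, A_4)$ realises the sharp bound in the resulting quartic inequality, producing the two distinct formulas for $\gamma$. Aside from this algebraic lemma, the remaining arguments closely parallel those already worked out in \cite{B,BN} and in Theorems \ref{AB2} and \ref{C}.
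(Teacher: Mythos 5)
Your high-level plan follows the paper: prove that $Q\le 0$ is preserved, improve this to a decaying pinching of the traceless part via a Stampacchia iteration, and finish with a gradient estimate and the $T<\infty$/$T=\infty$ dichotomy. However, the key quantity you run the iteration on is wrong in two ways, and the errors are precisely the place where the sphere case differs from the already-known $\mathbb{R}^4$ result.

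First, your
$$f_\sigma := \bigl(|A|^2 + 2\gamma|K^\perp| - k|H|^2 - \beta\bar K\bigr)\,|H|^{-2+2\sigma}$$
has $Q$ in the numerator. By the preservation step $Q\le 0$, so this $f_\sigma$ is non-positive for all time; bounding $\sup f_\sigma$ or $\int f_\sigma^p\,d\mu$ then tells you nothing about how round the surface is becoming. The quantity that must decay is the \emph{deviation from umbilicity}, namely $|\mathring A|^2 + 2\gamma|K^\perp|$ (with $|\mathring A|^2 = |A|^2 - \tfrac12|H|^2$), and that is the numerator the paper uses in Proposition~\ref{san1} and Lemma~\ref{yanhua1}: $f_\sigma = (|\mathring A|^2 + 2\gamma|K^\perp|)/(\alpha|H|^2+\beta\bar K)^{1-\sigma}$ with $\alpha = k-\tfrac12$. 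Under the preserved pinching this numerator is controlled by $\alpha|H|^2 + \beta\bar K$, and the content of the iteration is that the ratio decays exponentially.

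Second, and more seriously, the denominator $|H|^{2(1-\sigma)}$ cannot be used in the sphere. One of the two allowed outcomes in Theorem~\ref{main2} is convergence to a totally geodesic $\mathbb{S}^2\subset\mathbb{S}^4$, where $H\equiv 0$; and even for finite time $|H|$ need not be bounded away from zero along the flow. With your choice, $f_\sigma$ is unbounded wherever $|H|$ is small, so the $L^p$ estimates and the Stampacchia/De~Giorgi iteration do not start. This is exactly why the paper replaces $|H|^{2(1-\sigma)}$ by $(\alpha|H|^2 + \beta\bar K)^{1-\sigma}$, which is bounded below by $(\beta\bar K)^{1-\sigma}>0$; this modified denominator is also what produces the extra $-\bar K f_\sigma$ term in Lemma~\ref{yanhua1} that makes the Gr\"onwall argument close with exponential decay (Lemma~4.6). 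You have carried over the hyperbolic/Euclidean normalisation (where $|H|_{\min}>0$ is forced) without adapting it to the ambient sphere, which is the only genuinely new analytic point in this case. The rest of your sketch (preservation via the reaction-term algebra with $\gamma$ chosen piecewise, $\beta=4k-2$ forced by the $\bar K$-linear terms, rescaling to the Euclidean picture when $T<\infty$, exponential decay to a geodesic sphere when $T=\infty$) is consistent with the paper's proof.
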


In particular if $k=\frac23$, Theorem \ref{main2} is the same as the case of $n=2$ and $d=2$ in Theorem \ref{AB2}. Consider the compact surface  $M=\mathbb{S}^1(r)\times \mathbb{S}^1(s)\subset \mathbb{S}^3(1)\subset \mathbb{S}^4(1)$ with $r^2+s^2=1$. We have $|A|^2 = |H|^2+2$, which implies that it doesn't satisfy the pinching condition.

\begin{thm}\label{main}
Let $F_0 :M \rightarrow \mathbb{H}^{4}(\bar K)$ be a compact surface immersed in the hyperbolic space with
constant curvature $\bar K$. If $M$ satisfies
\begin{equation}\label{conh}|A|^2+2\gamma|K^\perp| \leq k|H|^2+\beta \bar K, \ \ \beta= 4-\frac{2}{k},
\end{equation}
then the mean curvature flow with the initial value $F_0$  converges to a round point in finite time.
\end{thm}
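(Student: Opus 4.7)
The plan is to follow the Andrews--Baker--Huisken strategy for mean curvature flow convergence, extended to the normal curvature pinching setting by Baker--Nguyen \cite{BN}, and adapted to a hyperbolic ambient background as in the hyperbolic Andrews--Baker case handled by Liu--Xu--Ye--Zhao \cite{LXYZ}. Write
\begin{equation*}
Q := |A|^{2} + 2\gamma |K^{\perp}| - k|H|^{2} - \beta\bar K.
\end{equation*}
The overall goal is to show in succession that (i) $\{Q\leq 0\}$ is preserved along the flow, (ii) the pinching improves quantitatively as $|H|$ grows, (iii) $|\nabla A|$ is controlled by $|H|^{2}$, and (iv) a rescaling/blow-up argument produces a round point at the finite singular time.

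\textbf{Step 1 (evolution equations and lower bound for $|H|$).} I first compute the evolution equations $(\partial_{t}-\Delta)|A|^{2}$, $(\partial_{t}-\Delta)|H|^{2}$, and $(\partial_{t}-\Delta)|K^{\perp}|^{2}$ in the ambient hyperbolic space form $\mathbb{H}^{4}(\bar K)$. These differ from the Euclidean versions of \cite{BN} only by explicit reaction terms proportional to $\bar K$ coming from the Simons-type identity; the normal-bundle evolution is formally unchanged because Ricci's equation in a space form picks up no extra ambient term. Observe next that the pinching hypothesis together with $|A|^{2}\geq |H|^{2}/2$ forces $(k-\tfrac{1}{2})|H|^{2} \geq -\beta\bar K = (4-\tfrac{2}{k})|\bar K|$, hence $|H|^{2}\geq 4|\bar K|/k>0$ on $M_{0}$. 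I will verify that this positive lower bound for $|H|$ is preserved, so $|H|_{\min}>0$ is retained along the flow.

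\textbf{Step 2 (preservation of the pinching).} This is the heart of the argument. Using the evolution equations from Step 1, I compute $(\partial_{t}-\Delta)Q$ and show it has a favorable sign on the zero set $\{Q=0\}$. Since $|K^{\perp}|$ is only locally Lipschitz, I regularize by $\sqrt{|K^{\perp}|^{2}+\varepsilon}$ and apply Hamilton's tensor maximum principle before sending $\varepsilon\to 0$, exactly as in \cite{BN}. In the Euclidean setting, Baker--Nguyen control the reaction polynomial in $|A|^{2}$, $|H|^{2}$, $|K^{\perp}|$ using $|A|^{2}+2\gamma|K^{\perp}|\leq k|H|^{2}$; here one picks up additional terms of the schematic form $c_{1}\bar K|A|^{2}+c_{2}\bar K|H|^{2}$. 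The specific value $\beta=4-\tfrac{2}{k}$ is chosen precisely so that, on $\{Q=0\}$, substitution of $|A|^{2}=k|H|^{2}-2\gamma|K^{\perp}|+\beta\bar K$ into these ambient contributions exactly cancels the new bad terms (after multiplying through by $k$ to clear the $1/k$), reducing the inequality back to the Baker--Nguyen Euclidean one.

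\textbf{Step 3 (improvement of pinching, gradient estimate, and blow-up).} Once preservation is in place, I run a Stampacchia iteration on the evolution inequality for
\begin{equation*}
f_{\sigma}:=\bigl(|A|^{2}-\tfrac{1}{2}|H|^{2}-c\,\bar K\bigr)|H|^{\sigma-2},
\end{equation*}
as in \cite{AB,B,BN}, to show that the traceless part of $A$ decays relative to $|H|^{2}$ at the singular time. A Kato-type gradient estimate yielding $|\nabla A|^{2}\leq \varepsilon|H|^{4}+C_{\varepsilon}$ follows from another integral iteration. Because $|H|_{\min}>0$ is preserved, $|H|$ must blow up at a finite time $T$. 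Rescaling at points where $|A|$ concentrates produces a smooth limit flow in $\mathbb{R}^{4}$; the background curvature $\bar K$ scales away, the preserved and improved pinching forces the limit to be totally umbilic, and hence the limit is a shrinking round $\mathbb{S}^{2}$. A continuous-dependence argument then yields convergence of $M_{t}$ to a round point.

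The main obstacle I expect is Step 2: verifying that the new $\bar K$-reaction terms align exactly with the Euclidean Baker--Nguyen reaction polynomial under the precise constant $\beta=4-\tfrac{2}{k}$, and doing so on both subcases of $\gamma$ in \eqref{gamma}. The individual manipulations are elementary, but checking the polynomial inequality in $(|A|^{2},|H|^{2},|K^{\perp}|)$ after substitution is delicate, particularly across the threshold $k=\tfrac{2}{3}$ where the coefficient $\gamma$ changes formula.
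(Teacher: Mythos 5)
Your overall architecture matches the paper's: preserve the pinching via a maximum-principle argument (the paper's Proposition 3.1 together with the hyperbolic choice of $\beta$ in Remark 3.2), improve the pinching via a Stampacchia iteration (Section 5), obtain a gradient estimate, and finish with a blow-up/rescaling argument. However, your Step 3 has a genuine gap.

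The Stampacchia iteration must be run on the \emph{normal-curvature-augmented} quantity. The paper works with $\tilde f_\sigma := (|\mathring A|^2 + 2\gamma|K^\perp|)/|H|^{2(1-\sigma)}$, not with $(|A|^2-\tfrac12|H|^2 - c\bar K)|H|^{\sigma-2}$ as you propose. The $2\gamma|K^\perp|$ term in the numerator is not cosmetic: the Poincar\'e inequality (the paper's Lemma 5.4) hinges on the Simons-type lower bound $2K|\mathring A|^2 - 2|K^\perp|^2 \geq \epsilon_2\,(|\mathring A|^2 + 2\gamma|K^\perp|)\,|H|^2$ of Remark 4.5, which only closes back onto $\tilde f_\sigma$ because the $|K^\perp|$ term is carried in the numerator. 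Equally, the negative gradient contribution $-2\epsilon_\nabla|\nabla A|^2$ in Lemma 5.2 arises from balancing $|\nabla A|^2$, $2\gamma\nabla_{\!\!evol}K^\perp$, and $|\nabla H|^2$ using the pinching fraction $(|\mathring A|^2 + 2\gamma|K^\perp|)/|H|^2 \leq k - \tfrac12$ together with $|\nabla A|^2 \geq 2\nabla_{\!\!evol}K^\perp$; without the $|K^\perp|$ term in $f_\sigma$, one is thrown back to the Andrews--Baker configuration, which only closes when the pinching coefficient is $\leq\tfrac23$, not the larger $k\leq\tfrac{29}{40}$ claimed. Your extraneous $-c\bar K$ in the numerator also has no counterpart here: in the hyperbolic case the paper's $\tilde f_\sigma$ contains no $\bar K$ at all, whereas in the spherical case the $\bar K$ enters in the \emph{denominator} as $(\alpha|H|^2+\beta\bar K)^{1-\sigma}$.

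Two smaller inaccuracies worth fixing. First, Step 1's assertion that ``the normal-bundle evolution is formally unchanged'' is not quite right: the evolution of $|K^\perp|$ does pick up an ambient reaction term $-4\bar K|K^\perp|$ (Lemma 2.2), inherited from the $2\bar K g_{ij}H_\alpha - n\bar K h_{ij\alpha}$ contribution in $\nabla_{\partial_t}h_{ij}$; this term must be tracked. Second, your statement that $\beta = 4-\tfrac2k$ makes the $\bar K$-reaction terms ``exactly cancel'' is only partly true: after separating $\frac{dQ}{dt}$ into a $\bar K$-free Baker--Nguyen part, a $\bar K$-linear part, and a $\bar K^2$ part, $\beta = 4-\tfrac2k$ makes the coefficient of $|\mathring A_1|^2$ in the $\bar K$-linear part vanish, but the remaining coefficients are made nonnegative rather than zero, and the good sign then comes from $\bar K<0$. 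Finally, the finite-time blow-up in hyperbolic space is not automatic from $|H|_{\min}>0$; the paper appeals to Lemma 4.5 of \cite{LXYZ} to conclude $T<\infty$.
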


In particular if $k=\frac23$, Theorem \ref{main} is the same as the case where $n=2$ and $d=2$ in Theorem \ref{A}. Consider the compact surface  $M=\mathbb{S}^1(r)\times \mathbb{S}^1(s)\subset \mathbb{S}^3(1)\subset \mathbb{H}^4(-1)$ with $r^2+s^2=1$. We have $|A|^2 = |H|^2-2$ and $|H|^2\geq8$, which implies that it doesn't satisfy the pinching condition.

In 1999, De Smet et al. \cite{Smet} proposed the well-known DDVV conjecture which was proved by Ge-Tang \cite{Ge} and Lu \cite{Lu}. Applying the DDVV inequality for codimension two surfaces in the space form, i.e., $2|K^\perp|\leq |A|^2-\frac{|H|^2}{2}$, and taking $k=\frac{29}{40}$ we have the following corollary.
\begin{corollary}\label{cor}Let $F_0 :M \rightarrow \mathbb{F}^{4}(\bar K)$ be a compact surface immersed in the space form with $|H|^2+4\bar K>0 $. Assume that $M$ satisfies
\begin{equation}\label{cor2}|A|^2 \leq \frac{89}{124}|H|^2+\beta \bar K,
\end{equation}
where $\beta=\begin{cases}
            \frac{27}{31}, \ & \bar{K}\geq0, \\
            \frac{1080}{899}, \ & \bar{K}<0,
          \end{cases}$
then the mean curvature flow with the initial value $F_0$  converges to a round point in finite time, or
converges to a total geodesic sphere of $\mathbb{S}^{4}(\frac{1}{\sqrt{\bar K}})$ as
$t\rightarrow\infty$.
\end{corollary}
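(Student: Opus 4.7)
The plan is to deduce Corollary \ref{cor} directly from Theorem \ref{main2} and Theorem \ref{main} (with Theorem \ref{C} handling the flat case $\bar K=0$), by using the DDVV inequality to absorb the normal-curvature term $2\gamma|K^\perp|$ into an expression involving only $|A|^2$ and $|H|^2$. I would fix $k=\tfrac{29}{40}$, the largest value permitted by \eqref{gamma}, so that $\gamma=1-\tfrac{4}{3}k=\tfrac{1}{30}$.

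Starting from the DDVV bound $2|K^\perp|\leq |A|^2-\tfrac{1}{2}|H|^2$, multiplying by $\gamma$ and adding $|A|^2$ to both sides gives
\[
|A|^2+2\gamma|K^\perp|\;\leq\;(1+\gamma)|A|^2-\tfrac{\gamma}{2}|H|^2\;=\;\tfrac{31}{30}|A|^2-\tfrac{1}{60}|H|^2.
\]
Substituting the corollary's hypothesis $|A|^2\leq\tfrac{89}{124}|H|^2+\beta\bar K$, the $|H|^2$-coefficient collapses to $\tfrac{29}{40}=k$ (since $\tfrac{31}{30}\cdot\tfrac{89}{124}-\tfrac{1}{60}=\tfrac{89}{120}-\tfrac{2}{120}=\tfrac{29}{40}$), yielding
\[
|A|^2+2\gamma|K^\perp|\;\leq\;k|H|^2+\tfrac{31\beta}{30}\bar K.
\]

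Next I would verify that the two values of $\beta$ chosen in the corollary are exactly those which make $\tfrac{31\beta}{30}$ match the $\bar K$-coefficient demanded by the relevant theorem. For $\bar K\geq0$, Theorem \ref{main2} requires $4k-2=\tfrac{9}{10}$, and $\tfrac{31}{30}\cdot\tfrac{27}{31}=\tfrac{9}{10}$. For $\bar K<0$, Theorem \ref{main} requires $4-\tfrac{2}{k}=\tfrac{36}{29}$, and $\tfrac{31}{30}\cdot\tfrac{1080}{899}=\tfrac{36}{29}$ (using $899=29\cdot 31$). The auxiliary hypothesis $|H|^2+4\bar K>0$ ensures in the hyperbolic regime that the right-hand side of the reduced pinching inequality is positive, so $|H|_{\min}>0$ and Theorem \ref{main} applies; in the spherical and flat regimes it follows automatically from the pinching together with $|A|^2\geq 0$.

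I do not anticipate any genuine obstacle: the argument is essentially a substitution exercise once the DDVV step is taken, and the choice $k=\tfrac{29}{40}$ is precisely the largest value that makes the upper branch of \eqref{gamma} available, so it extracts the sharpest scalar pinching coefficient obtainable from Theorems \ref{main2} and \ref{main} via DDVV. The only point requiring care is checking the two numerical identities above and invoking the correct ambient theorem (Theorem \ref{C} when $\bar K=0$, Theorem \ref{main2} when $\bar K>0$, Theorem \ref{main} when $\bar K<0$) to conclude the asserted convergence to a round point or to a totally geodesic sphere.
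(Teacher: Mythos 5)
Your argument is correct and is exactly the paper's proof, merely spelled out: apply the DDVV bound $2|K^\perp|\leq |A|^2-\tfrac12|H|^2$ at $k=\tfrac{29}{40}$ (so $\gamma=\tfrac1{30}$), and the corollary's hypothesis collapses to the pinching condition of Theorem~\ref{main2}, Theorem~\ref{main}, or Theorem~\ref{C} with the stated coefficients. The only small slip is the parenthetical claim that in the flat case the positivity of the right-hand side ``follows automatically from the pinching'' — it does not (a minimal surface with $|H|=0$ satisfies $|A|^2\leq\tfrac{89}{124}|H|^2$ trivially); there the hypothesis $|H|^2+4\bar K>0$ is genuinely what supplies $|H|_{\min}>0$, but since you invoke that hypothesis anyway the conclusion is unaffected.
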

For $\bar{K}>0$, $|H|^2+4\bar K>0$ is automatically satisfied. For  $\bar{K}=0$
, $|H|^2+4\bar K>0$ is equivalent to that the mean curvature is nowhere vanishing.
For $\bar{K}<0$, $|H|^2+4\bar K>0$ is implied by condition (\ref{cor2}). Corollary \ref{cor} improves Theorem \ref{AB}, Theorem \ref{AB2} and Theorem \ref{A} for codimension two surfaces. In fact, for $\bar{K}<0$ we have $|A|^2\leq \frac{2}{3}|H|^2+\bar K<\frac{89}{124}|H|^2+\frac{1080}{899} \bar K$ with $|H|^2+4\bar K>0$.

Based on the convergence results and the examples above, we propose the following optimal problem.
\begin{open problem}Let $F_0 :M \rightarrow \mathbb{F}^{4}(\bar K)$ be a compact surface immersed in the space form with $|H|^2+4\bar K>0 $. Assume that $M$ satisfies
\begin{equation}|A|^2+2(1-k)|K^\perp| < k|H|^2+\beta \bar K,
\end{equation}
for $\frac12 < k \leq 1$ and $\beta=\begin{cases}
            4k-2, \ & \bar{K}\geq0, \\
            4-\frac{2}{k}, \ & \bar{K}<0.
          \end{cases}$ Is it possible to prove that
  the mean curvature flow with the initial value $F_0$  converges to a round point in finite time, or
converges to a total geodesic sphere of $\mathbb{S}^{4}(\frac{1}{\sqrt{\bar K}})$ as
$t\rightarrow\infty$?
\end{open problem}

$  \ $
\section{Preliminaries}

Let $( M^{n} ,g )$ be the $n$-dimensional Riemannian submanifold isometrically immersed in a simply connected space form
$\mathbb{F}^{n+d} ( \bar K)$ with constant curvature $\bar K$. Denote by
$\bar{\nabla}$ the Levi-Civita connection of the ambient space
$\mathbb{F}^{n+d} $. We use the same symbol $\nabla$ to represent the
connection of the tangent bundle $T M$ and the normal bundle $N M$. Denote by
$( \cdot )^{\top}$ and $( \cdot )^{\bot}$ the projections onto $T M$ and $N
M$, respectively. For $u,v \in \Gamma ( T M )$, $\xi \in \Gamma ( N M )$, the
connection $\nabla$ is given by $\nabla_{u} v= ( \bar{\nabla}_{u} v )^{\top}$
and $\nabla_{u} \xi = ( \bar{\nabla}_{u} \xi )^{\perp}$. The second fundamental
form of $M$ is defined by
\[ A ( u,v ) = ( \bar{\nabla}_{u} v )^{\perp} . \]

Let $\{ e_{i}   \,|\,  1 \le i \le n \}$ be a local orthonormal frame
for the tangent bundle and $\{ \nu_{\alpha}   \,|\,  n+1 \le \alpha \le
n+d \}$ be a local orthonormal frame for the normal bundle. Let $\{ \omega_{i}
\}$ be the dual frame of $\{ e_{i} \}$. With the local frame, the first and
the second fundamental forms can be written as $g= \sum_{i} \omega^{i} \otimes
\omega^{i}$ and $A= \sum_{i,j, \alpha} h_{i j \alpha}   \omega^{i} \otimes
\omega^{j} \otimes \nu_{\alpha}=\sum_{i,j} h_{i j }   \omega^{i} \otimes
\omega^{j}$, respectively. The mean curvature vector is
given by
\[ H= \sum_{\alpha} H_{\alpha} \nu_{\alpha} , \hspace{1em} H_{\alpha} =
   \sum_{i} h_{i i \alpha} . \]
Let
$\mathring{A} =A - \tfrac{H}{n}  g$ be the
traceless second fundamental form, whose squared norm satisfies
$|\mathring{A}|^{2} = | A |^{2} - \frac{|H|^{2}}{n} $.
Denote by $\nabla^{2}_{i,j} T= \nabla_{i} ( \nabla_{j} T ) -
\nabla_{\nabla_{i} e_{j}} T$ the second order covariant derivative of tensors.
Then the Laplacian of a tensor is defined by $\Delta T= \sum_{i}
\nabla^{2}_{i,i} T$.

The normal curvature tensor in local orthonormal frames for the tangent and normal bundles is given by
\begin{equation}
  {Rm}^{\perp}=R_{ij\alpha\beta}^\bot=h_{ip\alpha}h_{jp\beta}-h_{jp\alpha}h_{ip\beta}.
\end{equation}

We have the following evolution equations for the mean curvature flow.
\begin{lemma}[\cite{AB}]\label{ee}
\begin{eqnarray}
  \nabla_{\partial_t} h_{ij} &=& \Delta h_{ij} +h_{ij}\cdot h_{pq}h_{pq}+h_{iq}\cdot h_{qp}h_{pj}+h_{jq}\cdot h_{qp}h_{pi} \label{ee1} \\
   && -2h_{iq}\cdot h_{jp}h_{pq}+2\bar KHg_{ij}-n\bar Kh_{ij}, \nonumber \\
  \nabla_{\partial_t} H &=& \Delta H +H\cdot h_{pq} h_{pq}+n\bar K H, \\
  \frac{\partial}{\partial t}|A|^2 &=& \Delta|A|^2-2|\nabla A|^2+2R_1+4\bar K|H|^2-2n\bar K |A|^2, \\
  \frac{\partial}{\partial t}|H|^2 &=& \Delta|H|^2-2|\nabla H|^2+2R_2+2n\bar K|H|^2,
\end{eqnarray}

where
\begin{eqnarray}
R_1&=&2\sum\limits_{\alpha,\beta}(\sum\limits_{i,j}h_{ij\alpha}h_{ij\beta})^2+|{Rm}^{\perp}|^2,\\
R_2&=&\sum\limits_{i,j}(\sum\limits_{\alpha}H_\alpha h_{ij\alpha})^2.
\end{eqnarray}
\end{lemma}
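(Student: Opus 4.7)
The plan is to derive the four evolution equations directly from the flow equation $\partial_t F = H$, following the standard Simons-type computation for mean curvature flow but adapted to arbitrary codimension in an ambient space form of constant sectional curvature $\bar K$.

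First I would record the evolutions of the induced metric and of the local orthonormal frames. Differentiating $g_{ij} = \langle \partial_i F,\partial_j F\rangle$ along the flow and using the Weingarten formula gives $\partial_t g_{ij} = -2\langle H,h_{ij}\rangle$, hence $\partial_t g^{ij} = 2\langle H,h^{ij}\rangle$. A companion computation produces the rotation of the normal frame $\{\nu_\alpha\}$ needed to keep it orthonormal along the flow. Both enter when one takes traces or differentiates the scalar components $h_{ij\alpha}$, so recording them cleanly at the outset saves repeated work.

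Next I would compute $\nabla_{\partial_t} h_{ij}$. Differentiating the defining relation for the second fundamental form in time and commuting derivatives through the Ricci identities produces an expression involving $\nabla^2 H$ together with ambient curvature contractions. Applying Simons' identity for arbitrary codimension,
\begin{equation*}
\Delta h_{ij} = \nabla^2_{ij} H + (\text{cubic in } h) + (\text{ambient curvature}),
\end{equation*}
lets one trade $\nabla^2 H$ for $\Delta h_{ij}$. The ambient contribution specialises using $\bar R_{ijkl} = \bar K(g_{ik}g_{jl} - g_{il}g_{jk})$ and collapses precisely into $2\bar K H g_{ij} - n\bar K h_{ij}$, yielding the first stated identity \eqref{ee1}. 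Tracing (and remembering the $\partial_t g^{ij}$ correction) immediately gives the evolution of $H$.

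For the scalar quantities I would apply the Bochner-type identity $\Delta|T|^2 = 2\langle T,\Delta T\rangle + 2|\nabla T|^2$ together with the metric evolution above. The cubic Simons pieces then need to be reorganised into the precise reaction terms $R_1$ and $R_2$ of the lemma; the key rearrangement is the well-known identity
\begin{equation*}
|\mathrm{Rm}^{\perp}|^2 = \sum_{\alpha,\beta}\sum_{i,j}\bigl(h_{ip\alpha}h_{jp\beta}-h_{jp\alpha}h_{ip\beta}\bigr)^2,
\end{equation*}
which, combined with the symmetry of $\sum_{i,j}h_{ij\alpha}h_{ij\beta}$ in the pair $(\alpha,\beta)$, allows one to recognise the resulting cubic contractions as $2R_1$ in the $|A|^2$ equation and as $2R_2$ in the $|H|^2$ equation. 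The main obstacle is essentially bookkeeping: reorganising the cubic-in-$h$ contractions so that they group into exactly $R_1$ and $R_2$ rather than some equivalent but less useful form, and ensuring that the ambient $\bar K$ contributions separate cleanly as the advertised $4\bar K|H|^2$ and $-2n\bar K|A|^2$ terms rather than mixing with the intrinsic pieces. Everything else is mechanical.
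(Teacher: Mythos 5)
The paper does not supply its own proof of this lemma: it is imported wholesale from Andrews--Baker (with the space-form curvature terms as appear in Baker's thesis and in Liu--Xu--Ye--Zhao), so there is no in-paper argument to compare against. Your proposal is precisely the standard derivation used in those references: compute $\partial_t g_{ij}=-2\langle H,h_{ij}\rangle$ and the induced normal-frame rotation, differentiate the defining relation of $h_{ij}$ in time, invoke the codimension-$d$ Simons identity to replace $\nabla^2_{ij}H$ by $\Delta h_{ij}$ plus cubic and ambient-curvature terms, specialise $\bar R$ to the constant-curvature form to obtain $2\bar K H g_{ij}-n\bar K h_{ij}$, and then trace and apply the Bochner identity (together with the $\partial_t g^{ij}$ correction) to obtain the scalar evolutions with reaction terms $R_1$ and $R_2$. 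This is a correct account of the argument and follows the same route as the cited source; the only part left implicit, as you note, is the bookkeeping that reassembles the cubic contractions into exactly $R_1$ and $R_2$, and the normal-curvature contribution via $|\mathrm{Rm}^\perp|^2=\sum_{i,j,\alpha,\beta}\bigl(\sum_p h_{ip\alpha}h_{jp\beta}-h_{jp\alpha}h_{ip\beta}\bigr)^2$, but that is purely mechanical once the framework is set up as you describe.
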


From now we consider the mean curvature flow of surfaces in $\mathbb{F}^{4}( \bar K)$. Let $\{ e_{i} \}$ be a frame diagonalizing the matrix $(h_{ij3})$. If $|H|\neq 0$, we choose $\nu_3=\frac{H}{|H|}$, then
$tr (h_{ij3})=|H|$ and $tr (h_{ij4})=0$. Set
\begin{equation}
  A=A_1+A_2=\begin{bmatrix} |H|/2+a & 0 \\ 0 & |H|/2-a \end{bmatrix} \nu_3 +\begin{bmatrix} b & c \\ c & -b\end{bmatrix} \nu_4,
\end{equation}
we have
\begin{equation}
  |\mathring{A}|^2=|\mathring{A}_1|^2+|\mathring{A}_2|^2=2a^{2}+2b^{2}+2c^{2},
\end{equation}
where $\mathring{A}_1=A_1-\frac{H}{2}g$ and $\mathring{A}_2=A_2$.

For surfaces of codimension two, we denote the normal curvature $R^\perp _{1234}$ by $K^\perp$ and we have
\begin{align}
  K^\perp=2ac&\leq a^{2}+b^{2}+c^{2}=\frac12 |\mathring{A}|^2,\label{Ka} \\
 |\nabla  K^\perp|&\leq 4 |\mathring{A}||\nabla \mathring{A}|.\label{Kac}
\end{align}

The following lemma is the evolution equation of the length of the normal curvature.
\begin{lemma}\label{eeK}
\begin{align}\label{norm}
\frac{\partial  }{ \partial t } |K^{\perp}|= \Delta |K^{\perp}| - 2 \frac{ K^{\perp} }{|K^{\perp}|  }\nabla_{\!\!evol}K^{\perp} + R_3-4\bar K|K^{\perp}|,
\end{align}
where
\begin{eqnarray}
  \nabla_{evol}K^\bot &=& \sum\limits_{p,q}(\nabla_qh_{1p3}\nabla_qh_{2p4}
-\nabla_qh_{2p3}\nabla_qh_{1p4}), \\
  R_3 &=& |K^{\perp}|\left( |A|^2 + 2|\mathring{A}|^2 - 2b^2 \right).
\end{eqnarray}
\end{lemma}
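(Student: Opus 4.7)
The plan is to pass from the non-smooth quantity $|K^\perp|$ to the smooth bilinear scalar $K^\perp$ via the identity $|K^\perp|^2=(K^\perp)^2$, so as to use the evolution of the second fundamental form directly. Wherever $K^\perp\neq 0$ we have $\nabla|K^\perp| = (K^\perp/|K^\perp|)\nabla K^\perp$ and hence $|\nabla|K^\perp||^2 = |\nabla K^\perp|^2$. Differentiating $|K^\perp|^2 = (K^\perp)^2$ in $t$ and in $\Delta$, with the usual product-rule cancellation of $|\nabla K^\perp|^2$ against $|\nabla|K^\perp||^2$, yields the clean identities
\[
\frac{\partial}{\partial t}|K^\perp| = \frac{K^\perp}{|K^\perp|}\frac{\partial K^\perp}{\partial t},\qquad \Delta|K^\perp| = \frac{K^\perp}{|K^\perp|}\Delta K^\perp,
\]
so $(\partial_t-\Delta)|K^\perp| = (K^\perp/|K^\perp|)(\partial_t-\Delta)K^\perp$, and it suffices to compute the right-hand side.

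Using $K^\perp = \sum_p(h_{1p3}h_{2p4}-h_{2p3}h_{1p4})$ I would apply the product rule to both $\partial_t$ and $\Delta$ on each of the four bilinear summands. The Laplacian of a product $fg$ contributes the cross term $2\langle \nabla f,\nabla g\rangle$; summed over $p$ with the signs dictated by $K^\perp$ this is exactly $2\nabla_{evol}K^\perp$, producing the $-2\nabla_{evol}K^\perp$ contribution to $(\partial_t-\Delta)K^\perp$ and hence, after multiplication by $K^\perp/|K^\perp|$, the gradient term in the statement. The remaining ``pointwise'' contribution is obtained by substituting \eqref{ee1} from Lemma~\ref{ee} into each occurrence of $(\partial_t-\Delta)h_{ij\alpha}$.

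The resulting algebraic expression splits into three pieces. First, the term $2\bar K H g_{ij}$ contributes zero, because $H$ lies in the $\nu_3$-direction alone and the resulting sum $2\bar K|H|(h_{124}-h_{214})$ vanishes by the symmetry of the second fundamental form. Second, the term $-n\bar K h_{ij\alpha}=-2\bar K h_{ij\alpha}$ appears in all four factors and produces $-4\bar K K^\perp$, giving $-4\bar K|K^\perp|$ after sign correction. Third, the Simons-type cubic terms must be expanded in the adapted frame, where $A_3=\mathrm{diag}(|H|/2+a, |H|/2-a)$ and $A_4=\left(\begin{smallmatrix}b & c\\c & -b\end{smallmatrix}\right)$. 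The useful identities $A_4^2=(b^2+c^2)I$ and $[A_3,A_4]_{12}=2ac=K^\perp$, together with explicit formulas for $A_\beta^2 A_\alpha+A_\alpha A_\beta^2-2A_\beta A_\alpha A_\beta$ for $\alpha,\beta\in\{3,4\}$, allow one to collect all contributions into $K^\perp$ times a polynomial in $|H|^2, a^2, b^2, c^2$.

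The main technical obstacle is this last algebraic simplification: after expansion, verifying that the cubic contractions collapse to $K^\perp\cdot(|A|^2+2|\mathring{A}|^2-2b^2)$ requires careful bookkeeping, with the $-2b^2$ correction breaking the apparent $b\leftrightarrow c$ symmetry of the traceless factor $A_4$ and arising from the non-trivial interaction $A_4A_3A_4$ with the diagonal matrix $A_3$. A convenient cross-check is the identity $h_{ij\alpha}(\partial_t-\Delta)h_{ij\alpha}=R_1$ for the evolution of $|A|^2$ in Lemma~\ref{ee}, which involves the same cubic contractions and must be recovered consistently by the same computation. Once the algebra is in place, equation \eqref{norm} follows by assembling the gradient, Simons-type, and ambient-curvature pieces and multiplying by $K^\perp/|K^\perp|$.
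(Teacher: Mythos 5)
Your proposal follows essentially the same route as the paper: substitute the evolution of $h_{ij\alpha}$ from Lemma~\ref{ee} into $K^\perp = \sum_p\bigl(h_{1p3}h_{2p4} - h_{2p3}h_{1p4}\bigr)$ via the product rule for $\partial_t$ and $\Delta$, identify the Laplacian cross-terms as $-2\nabla_{\!\!evol}K^\perp$, collect the $\bar K$-terms and the cubic reaction, and finally pass to $|K^\perp|$ (away from its zero set) using $\partial_t|K^\perp|=(K^\perp/|K^\perp|)\partial_t K^\perp$ and $\Delta|K^\perp|=(K^\perp/|K^\perp|)\Delta K^\perp$. As in the paper, the adapted-frame simplification of the cubic contractions down to $R_3=|K^\perp|\bigl(|A|^2+2|\mathring A|^2-2b^2\bigr)$ is asserted rather than carried out in detail, so the two treatments sit at the same level of rigor.
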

\begin{proof}
From the definition of $R_{ij\alpha\beta}^\bot$ and Lemma \ref{ee}, we have
\begin{align*}
\frac{\partial}{\partial t} R ^ { \perp} _{ ij\alpha \beta } &= \Delta R ^ { \perp} _{ ij \alpha \beta } - 2 \sum _{p,q }\left ( \nabla_q h _{ ip\alpha} \nabla _ q h _{ jp\beta} - \nabla _ q h _{ jp \alpha } \nabla _ q h _{ i p \beta }\right ) \\
 &\quad +\sum_{ p } \left (\frac{ d}{ dt} h _{ i p \alpha} h _{ j p \beta } + h _{ i p \alpha} \frac{ d } {dt } h _{ j p \beta } - \frac { d} { dt } h _{ j p \alpha } h _ {i p \beta } - h _{ jp \alpha} \frac{ d}{ dt }h_{ i p \beta }\right),
\end{align*}
where $\frac{ d } {dt } h _{ ij\alpha }$ is the reaction terms of $\nabla_{\partial_t} h_{ij}$, i.e.,
\begin{eqnarray}
  \frac{ d } {dt } h _{ ij\alpha } &=& h_{ij}\cdot h_{pq}h_{pq\alpha }+h_{iq}\cdot h_{qp}h_{pj\alpha }+h_{jq}\cdot h_{qp}h_{pi\alpha } \\
  && -2h_{iq}\cdot h_{jp}h_{pq\alpha }+2\bar Kg_{ij}H_\alpha -n\bar Kh_{ij\alpha}.\nonumber
\end{eqnarray}
Since $R^\perp_{1234}=K ^ \perp$, we denote  by $\nabla_{\!\!evol}K^{\perp}$ the gradient terms $$\sum\limits_{p,q}(\nabla_qh_{1p3}\nabla_qh_{2p4}
-\nabla_qh_{2p3}\nabla_qh_{1p4}),$$
and we have
\begin{equation}
  \frac{\partial}{\partial t}K ^ \perp= \Delta K^{\perp}-2\nabla_{\!\!evol}K^{\perp}+  K^{\perp}\left( |A|^2 + 2|\mathring{A}|^2 - 2b^2 -4\bar K\right).
\end{equation}
Thus from the evolution equation of $K ^ \perp$, we get
\begin{eqnarray}\label{norm}
\frac{\partial  }{ \partial t } |K^{\perp}|&=& \Delta |K^{\perp}| - 2 \frac{ K^{\perp} }{|K^{\perp}|  }\nabla_{\!\!evol}K^{\perp}\\
 &&+ |K^{\perp}|\left( |A|^2 + 2|\mathring{A}|^2 - 2b^2\right) -4\bar K|K^{\perp}|.\nonumber
\end{eqnarray}
\end{proof}

We have the following following estimates for the gradient of the second fundamental form.
\begin{lemma}[\cite{BN} Proposition 4.2]
\begin{align}
|\nabla A | ^ 2 &\geq \frac  {3 }{ 4 } | \nabla H | ^ 2,  \label{eqn_gradient1} \\
|\nabla A | ^ 2 - \frac  {1 }  {2} |\nabla H| ^ 2 &\geq \frac  {1}{3 } | \nabla A | ^ 2, \label{eqn_gradient2}\\
|\nabla A| ^ 2 & \geq  2 \nabla_{\!\!evol} K^{\perp}.
\label{eqn_gradient3}
\end{align}
\end{lemma}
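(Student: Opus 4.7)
The plan is to work at a fixed point and exploit the Codazzi identity, which for a surface in a space form says that $T_{ijk\alpha}:=\nabla_k h_{ij\alpha}$ is totally symmetric in the tangential indices $i,j,k\in\{1,2\}$. For each normal direction $\alpha\in\{3,4\}$ this leaves only four independent components, which I abbreviate
\[
A_\alpha:=T_{111\alpha},\qquad B_\alpha:=T_{112\alpha},\qquad C_\alpha:=T_{122\alpha},\qquad D_\alpha:=T_{222\alpha}.
\]
Counting multiplicities coming from the three symmetric tangential positions gives
\[
|\nabla A|^2 = \sum_\alpha\bigl(A_\alpha^2 + 3B_\alpha^2 + 3C_\alpha^2 + D_\alpha^2\bigr),\qquad
|\nabla H|^2 = \sum_\alpha\bigl((A_\alpha+C_\alpha)^2 + (B_\alpha+D_\alpha)^2\bigr),
\]
so each of \eqref{eqn_gradient1}, \eqref{eqn_gradient2}, \eqref{eqn_gradient3} reduces to a polynomial inequality in these eight scalars.

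For \eqref{eqn_gradient1}, direct expansion gives the sum-of-squares identity
\[
|\nabla A|^2-\tfrac{3}{4}|\nabla H|^2 \;=\; \sum_{\alpha=3,4}\Bigl[\bigl(\tfrac12 A_\alpha-\tfrac32 C_\alpha\bigr)^2 + \bigl(\tfrac12 D_\alpha-\tfrac32 B_\alpha\bigr)^2\Bigr]\;\geq\;0,
\]
and \eqref{eqn_gradient2} is then just the algebraic rearrangement $|\nabla A|^2-\tfrac12|\nabla H|^2\geq\tfrac13|\nabla A|^2$, which is equivalent to $|\nabla A|^2\geq\tfrac34|\nabla H|^2$.

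The substantive step is \eqref{eqn_gradient3}. Substituting the Codazzi-symmetric components into
\[
\nabla_{\!\!evol} K^\perp = \sum_{p,q}\bigl(\nabla_q h_{1p3}\,\nabla_q h_{2p4}-\nabla_q h_{2p3}\,\nabla_q h_{1p4}\bigr)
\]
and collecting terms yields the clean expression
\[
\nabla_{\!\!evol} K^\perp \;=\; A_3 B_4 - A_4 B_3 + 2(B_3 C_4 - B_4 C_3) + C_3 D_4 - C_4 D_3.
\]
I would then verify, by direct expansion, the identity
\begin{align*}
|\nabla A|^2 - 2\,\nabla_{\!\!evol} K^\perp \;=\;& (A_3-B_4)^2 + (A_4+B_3)^2 + 2(B_3-C_4)^2 \\
& + 2(B_4+C_3)^2 + (C_3-D_4)^2 + (C_4+D_3)^2,
\end{align*}
which immediately implies \eqref{eqn_gradient3}.

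The main obstacle is the bookkeeping behind the last identity: one must first apply the Codazzi symmetries consistently in $\nabla_{\!\!evol} K^\perp$ so that only the eight basic quantities remain, and then guess the correct sum-of-squares decomposition. The coefficients $2$ in front of $(B_3-C_4)^2$ and $(B_4+C_3)^2$ are forced by the multiplicity $3$ in $|\nabla A|^2$, and once the signs inside each square are chosen to absorb the corresponding bilinear cross term, the check is a routine expansion. Note that replacing $\nu_4$ by $-\nu_4$ flips the sign of $\nabla_{\!\!evol} K^\perp$ but not of $|\nabla A|^2$, so the same decomposition in fact delivers $|\nabla A|^2\geq 2|\nabla_{\!\!evol} K^\perp|$ at no extra cost.
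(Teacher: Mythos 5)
Your proposal is correct, and the component bookkeeping checks out. I verified each piece: with the Codazzi-symmetric components $A_\alpha,B_\alpha,C_\alpha,D_\alpha$ one indeed has $|\nabla A|^2=\sum_\alpha(A_\alpha^2+3B_\alpha^2+3C_\alpha^2+D_\alpha^2)$ and $|\nabla H|^2=\sum_\alpha((A_\alpha+C_\alpha)^2+(B_\alpha+D_\alpha)^2)$, the sum-of-squares identity for $|\nabla A|^2-\tfrac34|\nabla H|^2$ expands to $\sum_\alpha\bigl(\tfrac14A_\alpha^2+\tfrac94B_\alpha^2+\tfrac94C_\alpha^2+\tfrac14D_\alpha^2-\tfrac32A_\alpha C_\alpha-\tfrac32B_\alpha D_\alpha\bigr)$ on both sides, the expression for $\nabla_{\!\!evol}K^\perp$ is what the four-term $(p,q)$ sum gives (with the $p=1,q=2$ and $p=2,q=1$ contributions coinciding to produce the factor $2$), and the six-square identity for $|\nabla A|^2-2\nabla_{\!\!evol}K^\perp$ balances exactly in both the diagonal and cross terms. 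The final remark that the decomposition actually yields $|\nabla A|^2\geq 2|\nabla_{\!\!evol}K^\perp|$ after flipping $\nu_4$ is also right.

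One contextual note: the paper does not supply its own proof here — it simply cites \cite{BN}, Proposition~4.2 — so there is no in-text argument to compare against. Your component-level sum-of-squares verification is essentially the kind of argument used in \cite{BN}, and it is the natural thing to do in the special case $n=2$, $d=2$. For \eqref{eqn_gradient1} there is also a dimension- and codimension-free route via the orthogonal decomposition $\nabla A=E+F$ with $E_{ijk}=\tfrac{1}{n+2}(\nabla_iH\,g_{jk}+\nabla_jH\,g_{ik}+\nabla_kH\,g_{ij})$, which gives $|\nabla A|^2\geq|E|^2=\tfrac{3}{n+2}|\nabla H|^2$ directly; your sum-of-squares identity is exactly the component form of $|F|^2\geq0$ when $n=2$. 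For \eqref{eqn_gradient3}, which is genuinely a codimension-two statement, your explicit sum-of-squares decomposition is the clean way to do it, and arguably sharper than a blunt Cauchy–Schwarz estimate since it exhibits the equality case.
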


$ \ $
\section{Preservation of curvature pinching}

In this section, we prove the following curvature pinching condition preserves along the mean curvature flow in the space form $\mathbb{F}^{4}(\bar{K})$.
\begin{proposition}\label{baochi}
Let $F_0 :M \rightarrow \mathbb{S}^{4}(\frac{1}{\sqrt{\bar K}})$ be a compact surface immersed in the sphere with
constant curvature $\bar K$. Suppose $ |A| ^ 2 + 2 \gamma | K^\perp| < k |H|^ 2+\beta\bar K $,
 $\beta=4k-2$, then this condition holds
  along the mean curvature flow for all time $t \in [ 0,T )$ where $T\leq\infty$.
\end{proposition}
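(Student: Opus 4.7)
The plan is to apply the parabolic maximum principle to the scalar quantity
$$Q := |A|^2 + 2\gamma|K^\perp| - k|H|^2 - \beta\bar K,$$
which is strictly negative at $t = 0$ by hypothesis. At a putative first space--time zero of $Q$ one has $\nabla Q = 0$, $\Delta Q \leq 0$, and $\partial_t Q \geq 0$; the goal is to derive $(\partial_t - \Delta)Q < 0$ at that point to reach a contradiction.

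First I would compute $(\partial_t - \Delta)Q$ using Lemma \ref{ee} (specialised to $n = 2$) together with Lemma \ref{eeK}. Collecting terms, the right-hand side splits into three pieces: a gradient piece $-2|\nabla A|^2 + 2k|\nabla H|^2 - 4\gamma\,\tfrac{K^\perp}{|K^\perp|}\nabla_{\!\!evol}K^\perp$; an algebraic reaction piece $2R_1 + 2\gamma R_3 - 2kR_2$ that does not involve $\bar K$; and an ambient-curvature piece $4\bar K|H|^2 - 4\bar K|A|^2 - 8\gamma\bar K|K^\perp| - 4k\bar K|H|^2$.

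The gradient piece is handled by the Kato-type estimates of Lemma 2.5: using $|\nabla A|^2 \geq 2|\nabla_{\!\!evol}K^\perp|$ and $|\nabla H|^2 \leq \tfrac{4}{3}|\nabla A|^2$, it is bounded above by $(2\gamma - 2)|\nabla A|^2 + 2k|\nabla H|^2 \leq (2\gamma - 2 + \tfrac{8k}{3})|\nabla A|^2$, which evaluates to $-\tfrac{k}{3}|\nabla A|^2$ when $\gamma = 1 - \tfrac{3k}{2}$ and to exactly $0$ when $\gamma = 1 - \tfrac{4k}{3}$; in both subcases the gradient piece is non-positive, and the piecewise formula \eqref{gamma} is precisely calibrated to achieve this. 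For the ambient-curvature piece I would substitute $|A|^2 = k|H|^2 + \beta\bar K - 2\gamma|K^\perp|$ (valid at the zero of $Q$); the two $|K^\perp|$-terms cancel—this cancellation is what forces the choice $\beta = 4k - 2$—and the piece collapses to $-4(2k-1)\bar K(|H|^2 + 2\bar K)$, manifestly non-positive since $\bar K > 0$ and $k > \tfrac{1}{2}$.

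The main obstacle is the reaction piece $R_1 + \gamma R_3 - kR_2$. Baker--Nguyen's proof of Theorem \ref{C} establishes $R_1 + \gamma R_3 - kR_2 \leq 0$ under the \emph{Euclidean} pinching $|A|^2 + 2\gamma|K^\perp| \leq k|H|^2$ by a case-by-case polynomial analysis in the parameters $a, b, c$ of the decomposition $A = A_1 + A_2$ from Section 2, treating the two ranges of $k$ separately. Here the pinching carries an extra slack $\beta\bar K > 0$, so I would revisit that polynomial inequality and quantify its dependence on the slack, proving an upper bound of the form $R_1 + \gamma R_3 - kR_2 \leq 2(2k-1)\bar K(|H|^2 + 2\bar K)$ at the zero of $Q$. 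This excess is exactly matched by the negative margin produced by the ambient-curvature piece in the previous step, so the two contributions combine to a non-positive total. Once this algebraic inequality is verified, a standard $\varepsilon$-perturbation of $\beta$ sharpens the maximum-principle conclusion to strict preservation of $Q < 0$ on $[0,T)$.
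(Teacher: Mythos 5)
Your setup mirrors the paper: evolve $Q = |A|^2 + 2\gamma|K^\perp| - k|H|^2 - \beta\bar K$, bound the gradient terms with the Kato-type estimates of Lemma~2.5, and show the reaction terms are negative at a first zero. Your observation that the $\bar K$-terms collapse to $-4(2k-1)\bar K(|H|^2+2\bar K)$ once $|A|^2+2\gamma|K^\perp| = k|H|^2+\beta\bar K$ and $\beta=4k-2$ is correct and is a cleaner repackaging than the paper's expansion in the $a,b,c$ variables. However there are two genuine gaps.

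First, the entire weight of the argument rests on the claimed inequality $R_1 + \gamma R_3 - kR_2 \leq 2(2k-1)\bar K(|H|^2+2\bar K)$ at the zero of $Q$, and you do not prove it; you only say you ``would revisit'' the Baker--Nguyen polynomial analysis. This is not a routine deferral. In the Euclidean case Baker--Nguyen substitute $(k-\tfrac12)|H|^2 = |\mathring A|^2 + 2\gamma|K^\perp|$ into $R_2$; here the substitution is $(k-\tfrac12)|H|^2 = |\mathring A|^2 + 2\gamma|K^\perp| - \beta\bar K$, which couples the $\bar K$-terms into the polynomial in $a,b,c$. The paper carries this out explicitly, separates the result into a $\bar K^0$ piece (handled by Baker--Nguyen's numerical estimates), a coefficient of $\bar K$, and a coefficient of $\bar K^2$, and selects $\beta = 4k-2$ precisely to make the latter two coefficients nonpositive (indeed, the $\bar K^2$ coefficient vanishes and the $\bar K$ coefficient becomes $8(k-1)|\mathring A_1|^2 - 4\gamma|K^\perp|$). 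Your description of the choice of $\beta$ as being ``forced'' by the cancellation of $|K^\perp|$-terms is also incorrect: that cancellation happens for every $\beta$, and the constraint really comes from these coefficient signs. And the claimed ``exact matching'' between reaction and ambient pieces does not reflect the actual structure of the estimate: the paper obtains a strict margin from the $\bar K^0$ piece, not a delicate cancellation.

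Second, you omit the $|H| = 0$ case. In $\mathbb{R}^4$ Baker--Nguyen assume $|H|_{\min}>0$, so this never arises, but in $\mathbb{S}^4$ the pinching allows $H = 0$ because of the additional $\beta\bar K > 0$ slack. At such a point the frame choice $\nu_3 = H/|H|$ is unavailable and the entire $a,b,c$ machinery breaks down. The paper handles this case separately: at the zero of $Q$ one has $|A|^2 + 2\gamma|K^\perp| = \beta\bar K$, and combining Li--Li's inequality $2R_1 \leq 3|A|^4$ with $\beta = 4k-2 < \tfrac{4}{3}$ yields the required strict negativity. This case needs a separate argument in any correct proof.
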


\begin{proof}
Suppose the compact surface satisfies $|A| ^ 2 + 2 \gamma | K^\perp| -k |H|^ 2-\beta\bar K<0$ at the initial time.
Denote by
\begin{equation}
  Q(x,t,\bar K)=|A| ^ 2 + 2 \gamma | K^\perp| -k |H|^ 2-\beta\bar K,
\end{equation}
then $Q<0$ also holds at the initial time. If the curvature pinching condition does not preserve, then there is a first point and a time such that $Q(x,t)=0$, where $x$ is the maximal point of $Q$ at time $t$. From Lemma \ref{ee} and Lemma \ref{eeK}, we have
\begin{align}
\frac { \partial }{ \partial t } Q =& \Delta Q - 2 \left( |\nabla A | ^ 2 + 2\gamma \frac{K^\perp}{|K^\perp|}\nabla_{evol} K^\perp - k|\nabla H| ^ 2 \right) + 2 R_1- 2 kR  _2 +2\gamma R_3\notag \\
&
+4\bar K|H|^2-4\bar K|A|^2-4k\bar K|H|^2-8\gamma\bar K| K^\perp| .\notag
\end{align}
From (\ref{eqn_gradient1}), we have
\begin{equation}|\nabla H| ^ 2\leq \frac43|\nabla A|^2\leq \frac32|\nabla A|^2,
\end{equation}
and combine with (\ref{eqn_gradient3}), we have
\begin{equation}\label{Q}
  - 2 \left( |\nabla A | ^ 2 + 2\gamma \frac{K^\perp}{|K^\perp|}\nabla_{evol} K^\perp - k|\nabla H| ^ 2 \right)\leq (-2+2\gamma+\frac83 k)|\nabla A|^2<0.
\end{equation}

If $|H|=0$, the evolution equation of $Q$ and $(\ref{Q})$ implies
\begin{equation}
  \frac { \partial }{ \partial t } Q \leq \Delta Q + 2 R_1 +2\gamma R_3-4\bar K|A|^2-8\gamma\bar K| K^\perp|.
\end{equation}
Since $|A| ^ 2 + 2 \gamma | K^\perp| =\beta\bar K$ at the point $x$ of time $t$, the reaction terms are
\begin{eqnarray}
  &&2 R_1 +2\gamma R_3-4\bar K|A|^2-8\gamma\bar K| K^\perp|  \\
 &=&2 R_1 +2\gamma | K^\perp|(3|A|^{2}-2b^2)-4\bar K|A|^2-8\gamma\bar K| K^\perp| \nonumber\\
   & \leq& 2 R_1 +3|A|^2(\beta\bar{K}-|A|^{2})-4\bar K|A|^2.\nonumber
\end{eqnarray}
Li-Li's inequality \cite{LL} and the definition of $\beta$ imply
\begin{eqnarray}
  &&2 R_1 +2\gamma R_3-4\bar K|A|^2-8\gamma\bar K| K^\perp|\\
  &\leq& 3|A|^{4}+3|A|^2(\beta\bar{K}-|A|^{2})-4\bar K|A|^2 \nonumber \\
   &=& (3\beta-4)\bar K|A|^2 \nonumber\\
   &<&0.\nonumber
\end{eqnarray}
Thus it is a contradiction and the proposition follows in this case.

If $|H|\neq 0$, denote  by $\frac{d Q}{dt}$ the reaction terms of $\frac { \partial }{ \partial t } Q$ and at the supposed point we have
\begin{equation}
  \big(k-\frac{1}{2}\big)|H|^2=|\mathring{A}|^2+2\gamma|K^\perp|-\beta\bar K,
\end{equation}
then

\begin{eqnarray}
  \frac{d Q}{dt} &=& \big(-\frac{1}{k-1/2}+2\big)4a^{2}b^{2}+\big(-\frac{1}{k-1/2}+2\big)\gamma|K^\perp||\mathring{A}_1|^2 \nonumber\\
   && +\big(-\frac{3}{k-1/2}+6\big)\gamma|K^\perp||\mathring{A}_2|^2+\big(-\frac{1}{k-1/2}+2\big)|\mathring{A}_2|^4 \nonumber\\
   && +\big(-\frac{1+2\gamma^2}{k-1/2}+6\big)|K^\perp|^2\nonumber \\
   &&+\bar K \Big(2|\mathring{A}_1|^2\beta+\frac{2|\mathring{A}|^2\beta-|\mathring{A}_1|^2\beta+3\gamma|K^\perp|\beta}{k-1/2}-8|\mathring{A}|^2-16\gamma|K^\perp|\Big)\nonumber\\
   &&-\bar{K}^{2}\big(\frac{\beta^2}{k-1/2}-4\beta\big).\nonumber
\end{eqnarray}
For the sake of simplicity, we can write the above equation as
\begin{eqnarray}
\frac{d Q}{dt} &=&\frac{d Q}{dt}(x,t,0)+\bar K\Big(2|\mathring{A}_1|^2\beta+\frac{2|\mathring{A}|^2\beta-|\mathring{A}_1|^2\beta}{k-1/2}-8|\mathring{A}|^2\Big)\\
&&+\bar K \big (\frac{3\beta}{k-1/2}-16\big)\gamma|K^\perp|\nonumber\\
&&-\bar{K}^{2}\big(\frac{\beta^2}{k-1/2}-4\beta\big).\nonumber
\end{eqnarray}
Using the numerical calculation for the extracted quadratic forms from $\frac{d Q}{dt}(x,t,0)$ as Proposition 4.1 in \cite{BN}, we have $\frac{d Q}{dt}(x,t,0)<0$.

For other terms, i.e., the coefficient of $\bar K$ is
\begin{equation}
  2\big(\frac{\beta}{k-1/2}-4\big)|\mathring{A}_2|^2+\big(\frac{\beta}{k-1/2}+2\beta-8\big)|\mathring{A}_{1}|^2+\big(\frac{3\beta}{k-1/2}-16\big)\gamma|K^\perp|,
\end{equation}
and the coefficient of $\bar {K}^2$ is
\begin{equation}
  -\big(\frac{\beta^2}{k-1/2}-4\beta\big).
\end{equation}
We choose
\begin{equation}
 4\big(k-\frac12\big) \leq \beta \leq \min \{4\big(k-\frac12\big),4-\frac{2}{k},\frac{16}{3}\big(k-\frac12\big) \}=4k-2\ \ for \ \  \frac12<k\leq\frac{3}{4}.
\end{equation}
Then $\frac { \partial }{ \partial t } Q < 0$, which is a contradiction via the maximum principle. Thus, we conclude $Q<0$ is preserved along the mean curvature flow.
\end{proof}

If the equality of the pinching condition holds somewhere on the initial surface, i.e., $|A| ^ 2 + 2 \gamma | K^\perp| = k |H|^ 2+\beta\bar K $, the similar argument in \cite{B} implies that after some short time the surface satisfies $|A| ^ 2 + 2 \gamma | K^\perp| < k |H|^ 2+\beta\bar K$.

\begin{remark}\label{Z0}

(i) When $\bar{K}=0$, the pinching condition in Theorem \ref{C} can be replaced by $|A| ^ 2 + 2 \gamma | K^\perp| < k |H|^ 2.$
(ii) When $\bar{K}<0$, we choose
\begin{equation}
  \beta=\max \{ 4\big(k-\frac12\big),4-\frac{2}{k},\frac{16}{3}\big(k-\frac12\big) \}=4-\frac{2}{k} \ \ for \ \  \frac12<k\leq\frac{3}{4}.
\end{equation}
\end{remark}

$ \ $
\section{Convergence theorem in the sphere}
In this section, we prove the convergence theorem for the mean curvature flow of surfaces in $\mathbb{S}^4(\frac{1}{\sqrt{\bar K}})$. First, we derive an estimate for the traceless second fundamental
form, which guarantees that $M$ becomes
spherical along the mean curvature flow.

\begin{proposition}\label{san1}
There exist constants $C < \infty$ and $\sigma, \delta>0$ both depending only on the initial surface such that for all time $t \in [0,T)$ where $T\leq\infty$, we have the estimate

\begin{align}\label{pinch}
|\mathring A|^ 2 + 2\gamma |K^{\perp}| \leq  C (|H| ^2+\bar{K})^{1 -\sigma}e^{-\delta t}.
\end{align}
\end{proposition}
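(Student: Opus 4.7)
The plan is to run a weighted maximum-principle argument on the scale-invariant ratio
$$f_\sigma \;=\; \frac{|\mathring A|^2 + 2\gamma|K^\perp|}{(|H|^2+\bar K)^{1-\sigma}},$$
for a small $\sigma>0$ to be chosen. The target is an evolution inequality of the shape $\partial_t f_\sigma \le \Delta f_\sigma + (\text{divergence-type gradient term}) - \delta f_\sigma$, from which the maximum principle gives $\sup_M f_\sigma(\cdot,t) \le C e^{-\delta t}$, which on rearrangement is exactly \eqref{pinch}.

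The key preparatory step is to upgrade Proposition \ref{baochi} to a quantitative margin. Because $M_0$ is compact and the hypothesis is strict, there exists $\epsilon_0 > 0$ with $|A|^2 + 2\gamma|K^\perp| \le (k-\epsilon_0)|H|^2 + (\beta - 4\epsilon_0)\bar K$ at $t=0$. After possibly shrinking $\epsilon_0$ so that \eqref{gamma} still holds for $k-\epsilon_0$, Proposition \ref{baochi} applied with the perturbed constants propagates this improved pinching for all $t\in[0,T)$, equivalently giving a uniform reservoir $k|H|^2+\beta\bar K-|A|^2-2\gamma|K^\perp| \ge \epsilon_0(|H|^2+\bar K)$ which is consumed below.

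Next I would compute the evolution of $f_\sigma$. Setting $N=|\mathring A|^2+2\gamma|K^\perp|$ and $D=|H|^2+\bar K$, Lemma \ref{ee}, Lemma \ref{eeK} and the identity $|\mathring A|^2=|A|^2-\tfrac12|H|^2$ yield $\partial_t N = \Delta N - 2G + R_N$ and $\partial_t D = \Delta D - 2|\nabla H|^2 + R_D$, where $G=|\nabla A|^2-\tfrac12|\nabla H|^2+2\gamma\,\tfrac{K^\perp}{|K^\perp|}\nabla_{\!\!evol}K^\perp$ is the combined good gradient term. The quotient rule gives
$$\partial_t f_\sigma = \Delta f_\sigma + \tfrac{2(1-\sigma)}{D}\langle\nabla D,\nabla f_\sigma\rangle + \tfrac{1}{D^{1-\sigma}}\Bigl(-2G + (1-\sigma)\tfrac{2|\nabla H|^2 N}{D} + R_N - (1-\sigma)\tfrac{R_D N}{D}\Bigr).$$
The bad gradient contribution $(1-\sigma)\,2|\nabla H|^2 N/D$ is absorbed into $-2G$ for $\sigma$ small via \eqref{eqn_gradient1}--\eqref{eqn_gradient3} together with the Kato-type inequality $|\nabla|K^\perp||\le 4|\mathring A||\nabla\mathring A|$, using that $N/D$ is bounded by the pinching. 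The reaction combination $R_N-(1-\sigma)\tfrac{R_D N}{D}$ at $\sigma=0$ is essentially the reaction term $\tfrac{dQ}{dt}$ of Proposition \ref{baochi} weighted by $N/D$, so the improved pinching and the same numerical analysis carried out there upgrade it to $-c_0\epsilon_0 N D$. Perturbing in $\sigma$ preserves the strict sign, giving the desired inequality and concluding via the maximum principle.

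The principal obstacle is twofold. First, $|K^\perp|$ fails to be smooth on $\{K^\perp=0\}$; I would handle this by replacing $|K^\perp|$ with the regularisation $\sqrt{(K^\perp)^2+\eta^2}-\eta$, running the same estimates with constants uniform in $\eta$ and passing to the limit $\eta\downarrow 0$ (at supremum points with $K^\perp=0$ the quotient reduces to $|\mathring A|^2/D^{1-\sigma}$ and Huisken's classical scheme applies directly). Second, the numerical coordination of $\sigma$, $\epsilon_0$ and $\delta$ so that the gradient absorption and the reaction strict negativity hold simultaneously over the full admissible range $\tfrac12 < k \le \tfrac{29}{40}$ requires the same case-by-case quadratic-form manipulation as in Baker--Nguyen's Proposition 4.1 \cite{BN}, now perturbed by the $\bar K$-dependent terms of Proposition \ref{baochi}; this book-keeping is where the real technical burden sits.
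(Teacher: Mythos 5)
Your proposal correctly identifies the object $f_\sigma$ and correctly derives the shape of its evolution equation, but the core of your argument---closing the estimate by a pointwise maximum principle after improving the pinching to a strict margin $\epsilon_0$---is not viable, and in fact the paper explicitly warns against it (``Since the absolute term $\sigma|A|^2 f_\sigma$ in the evolution equation is positive, we cannot use the ordinary maximum principle''). The fatal issue is the sign and scaling of the zeroth-order reaction. Writing out the reaction of $f_\sigma$ as in \eqref{fangcheng}, the $\sigma$-free part is $\frac{2}{D^{1-\sigma}}\bigl(R_1+\gamma R_3 - (\alpha N/D + \tfrac12)R_2\bigr)$, where $N=|\mathring A|^2+2\gamma|K^\perp|$ and $D=\alpha|H|^2+\beta\bar K$. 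Expanding $R_1$, $R_2$, $R_3$ in the frame of Section 2, this quantity vanishes \emph{quadratically} in the traceless data $(\mathring A,K^\perp)$: to leading order near umbilical it equals $-\tfrac{|H|^2}{2}(\gamma|K^\perp|+|\mathring A_2|^2)+O(|\mathring A|^4)$, which is identically zero when the second normal component vanishes (a hypersurface-type configuration), leaving only a term of order $N^2/D^{1-\sigma}$. Your claimed lower bound ``$\le -c_0\epsilon_0\,N D$'' is therefore false; there is no negativity proportional to $N\cdot D$, only one proportional to $N^2$. Moreover, the strict margin actually weakens the estimate: it forces $\alpha N/D + \tfrac12 \le k-\epsilon_0$, i.e.\ a \emph{smaller} coefficient in front of the nonnegative quantity $R_2$, which makes the reaction \emph{less} negative, not more. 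Against this degenerate good term you must absorb $+2\sigma|A|^2 f_\sigma$, which is proportional to $N\cdot D^\sigma$; for configurations with $N$ small relative to $\sigma D$ (with $|\nabla A|$ also small) the bad term dominates pointwise, and no choice of $\sigma,\epsilon_0,\delta$ fixes this. Regularising $|K^\perp|$ and case-checking $k$ does not touch this obstruction.

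This is exactly why the paper (following Huisken \cite{H1}, Andrews--Baker \cite{AB}, Baker \cite{B} and Baker--Nguyen \cite{BN}) replaces the pointwise argument by an integral iteration. The missing ingredients in your proposal are: (i) the contracted Simons identity in the form of Lemma \ref{Sim}, giving $\tfrac12\Delta|A|^2 = \langle A,\nabla^2H\rangle + |\nabla A|^2 + 2K|\mathring A|^2 - 2|K^\perp|^2$; (ii) the pinching-dependent lower bound of Lemma \ref{Z2}, $2K|\mathring A|^2 - 2|K^\perp|^2 \ge \epsilon_1 N D$, which \emph{is} of the form you wanted but for a different quantity; (iii) the resulting Poincar\'e-type inequality (Lemma \ref{Poincare1}) that converts $\int f_\sigma^p D\,d\mu_t$ into gradient integrals $\int f_\sigma^{p-1}|\nabla A|^2/D^{1-\sigma}\,d\mu_t$ and $\int f_\sigma^{p-2}|\nabla f_\sigma|^2\,d\mu_t$; (iv) the $L^p$ estimate of Lemma \ref{lem1} obtained by feeding this Poincar\'e inequality against the good gradient term $-2\epsilon_\nabla|\nabla A|^2/D^{1-\sigma}$; and (v) the Stampacchia iteration upgrading $L^p$ decay to the $L^\infty$ estimate \eqref{pinch}. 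The good gradient term $-2\epsilon_\nabla|\nabla A|^2$, which you only use to neutralise the gradient cross terms, is in fact the quantity that ultimately kills the bad $2\sigma|A|^2 f_\sigma$ term---but only at the integral level, never pointwise.
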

Here we want to find the upper bound of $f_{\sigma} := ( |\mathring A|^2 + 2\gamma|K^{\perp}| )/ (\alpha|H|^{2}+\beta\bar{K})^{1-\sigma}$  by a Stampacchia iteration procedure as in \cite{B}, where $\alpha=k-1/2$. In view of the Proposition \ref{baochi}, we substitute $\gamma=1-\frac{4}{3}k-\epsilon_\nabla$ as in \cite{BN} to keep the gradient term negative. Now, we need the evolution equation of $f_{\sigma}$.
\begin{lemma}\label{yanhua1}
For every $\sigma\in (0,1)$ and $\epsilon_\nabla=1-\frac{4}{3}k-\gamma$, we have the evolution equation
\begin{eqnarray}\label{inequ}
\frac \partial { \partial t } f _{ \sigma} &\leq& \Delta f _\sigma + \frac{ 2 \alpha( 1- \sigma) }{ \alpha|H|^{2}+\beta\bar{K} } \langle \nabla _i |H|^2, \nabla _ i f _ \sigma \rangle \\
&&- \frac { 2 \epsilon _\nabla } { (\alpha|H|^{2}+\beta\bar{K}) ^ { 1-\sigma} }| \nabla A | ^ 2 + 2 \sigma | A| ^ 2 f _ \sigma-\bar K f_\sigma.\nonumber
\end{eqnarray}
\end{lemma}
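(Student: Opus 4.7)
The plan is to view $f_\sigma = u/v$ with $u := |\mathring A|^2 + 2\gamma|K^\perp|$ and $v := w^{1-\sigma}$, $w := \alpha|H|^2 + \beta\bar K$, and apply the quotient identity
\begin{equation*}
\partial_t f_\sigma - \Delta f_\sigma \;=\; \frac{\partial_t u - \Delta u}{v} \;-\; f_\sigma\,\frac{\partial_t v - \Delta v}{v} \;+\; \frac{2}{v}\,\nabla f_\sigma \cdot \nabla v.
\end{equation*}
Since $\nabla v/v = (1-\sigma)\nabla w/w = (1-\sigma)\alpha\,\nabla|H|^2/w$, the transport term is exactly the drift $\tfrac{2\alpha(1-\sigma)}{w}\langle\nabla|H|^2, \nabla f_\sigma\rangle$ in the lemma.

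\textbf{Input evolutions.} For $\partial_t u - \Delta u$ I would use the $n=2$ case of Lemma \ref{ee}; combining with the identity $|H|^2 = 2|A|^2 - 2|\mathring A|^2$ yields $\partial_t|\mathring A|^2 - \Delta|\mathring A|^2 = -2(|\nabla A|^2 - \tfrac12|\nabla H|^2) + 2R_1 - R_2 - 4\bar K|\mathring A|^2$. Together with Lemma \ref{eeK} this produces the gradient part $-2(|\nabla A|^2 - \tfrac12|\nabla H|^2) - 4\gamma\tfrac{K^\perp}{|K^\perp|}\nabla_{\!\!evol}K^\perp$ and the reaction part $N := 2R_1 - R_2 + 2\gamma R_3 - 4\bar K u$. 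For $v$, the chain rule on $v = w^{1-\sigma}$ gives $\partial_t v - \Delta v = (1-\sigma)w^{-\sigma}(-2\alpha|\nabla H|^2 + D) + \sigma(1-\sigma)w^{-\sigma-1}|\nabla w|^2$, with $D := 2\alpha R_2 + 4\alpha\bar K|H|^2$; the $|\nabla w|^2$ piece delivers a non-positive contribution $-\sigma(1-\sigma)f_\sigma|\nabla w|^2/w^2$ which I would simply drop.

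\textbf{Gradient estimate.} Collecting all gradient-in-$A$ terms and multiplying by $v$, one is left with
\begin{equation*}
-2|\nabla A|^2 + |\nabla H|^2 - 4\gamma\,\tfrac{K^\perp}{|K^\perp|}\,\nabla_{\!\!evol}K^\perp + \tfrac{2\alpha(1-\sigma)\,f_\sigma v}{w}\,|\nabla H|^2.
\end{equation*}
By Proposition \ref{baochi} one has $f_\sigma v = u \leq w$, so the last term is at most $2\alpha|\nabla H|^2 = (2k-1)|\nabla H|^2$. Applying \eqref{eqn_gradient1} ($|\nabla H|^2 \leq \tfrac43|\nabla A|^2$) and \eqref{eqn_gradient3} ($|\nabla_{\!\!evol}K^\perp| \leq \tfrac12|\nabla A|^2$), the whole expression is bounded by $\big({-2} + \tfrac{8k}{3} + 2\gamma\big)|\nabla A|^2 = -2\epsilon_\nabla|\nabla A|^2$, matching the lemma's gradient term.

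\textbf{Reaction estimate and main obstacle.} The target $\mathcal R := N/v - (1-\sigma)uD/(wv) \leq (2\sigma|A|^2 - \bar K)f_\sigma$, cleared of denominators, rearranges to $Nw - uD + \bar K uw \leq \sigma u(2|A|^2 w - D)$. Two pointwise claims then suffice: (i) $Nw - uD \leq -\bar K uw$, giving LHS $\leq 0$, and (ii) $D \leq 2|A|^2 w$, giving RHS $\geq 0$. Claim (ii) follows in the sphere from $R_2 \leq |A|^2|H|^2$, $|H|^2 \leq 2|A|^2$, and $\beta = 4\alpha$, which together yield $2\alpha\bar K|H|^2 \leq \beta\bar K|A|^2$. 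The hard step is claim (i): Proposition \ref{baochi} only produces negativity of $Nw - uD$ at a first touching point where $u = w$, whereas here one needs the quantitative curvature correction $-\bar K uw$ throughout the pinched region. Producing this requires re-running the quadratic-form analysis of Proposition \ref{baochi} in $|\mathring A_1|^2$, $|\mathring A_2|^2$, $|K^\perp|$ (together with the Li-Li and Cauchy-Schwarz bounds on $R_1$ and $R_2$) while carefully tracking the $\bar K$ and $\bar K^2$ coefficients under the choice $\beta = 4k-2$.
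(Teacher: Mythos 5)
Your quotient‐rule framework produces exactly the same identity the paper obtains by differentiating $f_\sigma$ directly, and your gradient estimate (including the use of $u\leq w$ to bound the drift piece by $2\alpha|\nabla H|^2$ and then $\eqref{eqn_gradient1}$, $\eqref{eqn_gradient3}$ to arrive at $-2\epsilon_\nabla|\nabla A|^2$) is correct and matches the paper's. Claim (ii) is also fine.

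Where you diverge from the paper is in the packaging of the reaction terms, and this is what makes claim (i) look hard. You absorbed the curvature terms coming from the flow equations ($-4\bar K u$ inside $N$, and $4\alpha\bar K|H|^2$ inside $D$) into the same inequality as the $R_1,R_2,R_3$ quadratic forms, so you are asking the quadratic‐form analysis to supply a $-\bar K uw$ margin. The paper never asks this. It keeps the two sources separate: after writing the evolution of $f_\sigma$ out, the pure curvature terms collect into
\begin{equation*}
-4\bar K(2-\sigma)f_\sigma+4\beta\bar K^{2}\,\frac{(1-\sigma)f_\sigma}{\alpha|H|^{2}+\beta\bar K},
\end{equation*}
and since $\beta\bar K/(\alpha|H|^2+\beta\bar K)\leq 1$ this is already $\leq -4\bar K f_\sigma\leq -\bar K f_\sigma$, with no help from the $R$-terms at all. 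The $R$-terms collect into $\frac{2}{w^{1-\sigma}}\bigl(R_1+\gamma R_3-(\alpha\,\tfrac{u}{w}+\tfrac12)R_2\bigr)$, which the paper discards as non‑positive under the pinching condition (the same quadratic‑form estimate underlying Proposition~\ref{baochi}), and the $\sigma$‑piece $\tfrac{2\alpha\sigma R_2 f_\sigma}{w}\leq 2\sigma|A|^2 f_\sigma$ supplies your RHS. In other words, the ``quantitative curvature correction'' you identify as the obstacle is free — it falls out of the evolution equations' $\bar K$-coefficients — and does not require re-running the Baker--Nguyen quadratic‑form calculation with $\bar K$‑tracking.

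What actually does deserve attention (and is a lesser issue you touch on) is the passage from the touching‑point computation in Proposition~\ref{baochi}, where $u=w$, to the pointwise inequality $R_1+\gamma R_3-(\alpha c+\tfrac12)R_2\leq 0$ for all $c=u/w\leq 1$: since the coefficient $-(\alpha c+\tfrac12)R_2$ becomes less negative as $c$ decreases, this is not automatic from monotonicity and is the step implicitly covered by the Baker--Nguyen/Andrews--Baker pointwise estimates. That step, however, is purely about the $R$-terms and involves no $\bar K$. So the structure of your attempt is sound, but the hard step you flagged is in the wrong place: once you split the $\bar K$-terms off as the paper does, claim~(i) dissolves into a trivial curvature‑term estimate plus the standard pointwise $R$-estimate.
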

\begin{proof}
From Lemma \ref{ee} and Lemma \ref{eeK}, we have
\begin{align}\label{shijian}
\frac{\partial }{\partial t}f_{\sigma} &= \frac{\Delta|{A}|^2 - 2|{\nabla A}|^2 + 2R_1 }{ (\alpha|H|^{2}+\beta\bar{K})^{1-\sigma} }  +   \frac{ 2\gamma \left( \Delta| K^{\perp} | - 2 \frac{K^{\perp}}{| K^{\perp}|} \nabla_{evol} K^{\perp}  + R_3 \right)}{ (\alpha|H|^{2}+\beta\bar{K})^{1-\sigma} }&  \\
        &\quad- \frac{1}{2}\frac{ ( \Delta|H|^2 - 2|\nabla H|^2 + 2R_2 ) }{ ( \alpha|H|^{2}+\beta\bar{K})^{1-\sigma} }& \notag\\
        &\quad- \frac{ \alpha(1-\sigma)(|A|^2 + 2\gamma|K^\perp| - \frac{1}{2}|H|^2) }{ (\alpha|H|^{2}+\beta\bar{K})^{2-\sigma} }( \Delta|H|^2 - 2|\nabla H|^2 + 2R_2)&\notag\\
&\quad -4\bar K (2-\sigma)\frac{|\mathring A|^2+2\gamma|K^\perp|}{(\alpha|H|^{2}+\beta\bar{K})^{1-\sigma}}+4\beta{\bar K}^{2} \frac{(1-\sigma)f_{\sigma}}{\alpha|H|^{2}+\beta\bar{K}}.\notag
\end{align}
By direct computation, we have
\begin{align}\label{daoshu}
\Delta f_{\sigma} &= \frac{ \Delta ( |\mathring A|^2 + 2\gamma|K^{\perp}|) }{ ( \alpha|H|^{2}+\beta\bar{K} )^{1-\sigma} } - \frac{ \alpha(1-\sigma)(|\mathring A|^2 + 2\gamma|K^{\perp}|) }{ ( \alpha|H|^{2}+\beta\bar{K} )^{2-\sigma} } \Delta|{H}|^2\\
        &\quad - \frac{ 2\alpha(1-\sigma) }{ ( \alpha|H|^{2}+\beta\bar{K} )^{2-\sigma} } \big\langle\nabla_i (|\mathring A|^2 + 2\gamma|K^{\perp}|), \nabla_i|{H}|^2 \big\rangle   \notag\\
        &\quad+ \frac{ {\alpha}^{2}(2-\sigma)(1-\sigma)(|\mathring A|^2 + 2\gamma|K^\perp|) }{ ( \alpha|H|^{2}+\beta\bar{K} )^{3-\sigma} } |{\nabla|{H}|^2}|^2,\notag
\end{align}
and
\begin{multline}
-\frac{ 2\alpha(1-\sigma) }{ (\alpha|H|^{2}+\beta\bar{K} )^{2-\sigma} } \big\langle \nabla_i (|\mathring A|^2 +2\gamma|K^\perp|), \nabla_i|{H}|^2 \big\rangle \\ = -\frac{ 2\alpha(1-\sigma) }{ \alpha|H|^{2}+\beta\bar{K} } \big\langle \nabla_i|{H}|^2, \nabla_i f_{\sigma} \big\rangle - \frac{ 2{\alpha}^{2}(1-\sigma)^2 }{ (\alpha|H|^{2}+\beta\bar{K})^2 } f_{\sigma}|{ \nabla|{H}|^2 }|^2.\notag
\end{multline}
From (\ref{shijian})and (\ref{daoshu}), we have
\begin{align}\label{fangcheng}
\frac{\partial}{\partial t} f_{\sigma} &= \Delta f_{\sigma} + \frac{ 2\alpha(1-\sigma) }{ \alpha|H|^{2}+\beta\bar{K} } \big\langle \nabla_i|{H}|^2, \nabla_i f_{\sigma} \big\rangle + \frac{ 2\alpha\sigma R_2 f_{\sigma} }{ \alpha|H|^{2}+\beta\bar{K} } \\
        &\quad - \frac{2\left( |{\nabla A}|^2 + 2\gamma \frac{K^{\perp}\nabla_{evol} K^{\perp}}{|K^{\perp}|} - \alpha\frac{ |\mathring A|^2 +2\gamma|{ K^{\perp} }| }{ \alpha|H|^{2}+\beta\bar{K} }|{ \nabla H }|^2-\frac12|{ \nabla H }|^2 \right)}{(\alpha|H|^{2}+\beta\bar{K})^{1-\sigma}} \notag \\
&\quad-4\bar K(2-\sigma)f_\sigma+4\beta{\bar K}^{2} \frac{(1-\sigma)f_{\sigma}}{\alpha|H|^{2}+\beta\bar{K}} \notag \\
&\quad - \frac{ {\alpha}^{2}\sigma(1-\sigma) }{ (\alpha|H|^{2}+\beta\bar{K})^{2} } f_{\sigma}|{ \nabla|{H}|^2 }|^2 - \frac{ 2\alpha\sigma(|\mathring A|^2  + 2\gamma|{ K^{\perp} }|) }{ (\alpha|H|^{2}+\beta\bar{K})^{2-\sigma} }|{\nabla H}|^2  \notag\\
        &\quad+ \frac{2}{ (\alpha|H|^{2}+\beta\bar{K})^{1-\sigma} }\left( R_1 + \gamma R_3 -(\alpha \frac{ |\mathring A|^2 + 2\gamma|{ K^{\perp} }| }{ \alpha|H|^{2}+\beta\bar{K} }+\frac12 )R_2\right).\notag
\end{align}
We discard the non-positive term on the last line in (\ref{fangcheng}) under the pinching condition as the proof in Proposition \ref{baochi}, and the gradient terms on the second line satisfies
\begin{multline}
    -2\left( |{\nabla A}|^2 + 2\gamma \frac{K^{\perp}\nabla_{evol} K^{\perp}}{|K^{\perp}|} - \alpha\frac{ |\mathring A|^2 +2\gamma|{ K^{\perp} }| }{ \alpha|H|^{2}+\beta\bar{K} }|{ \nabla H }|^2-\frac12|{ \nabla H }|^2 \right)  \\ \leq -2 ( |{ \nabla A }|^2 - \gamma|{ \nabla A }|^2-\alpha |\nabla H|^{2}-\frac12|{ \nabla H }|^2)
    \leq -  2\epsilon_{\nabla} |{ \nabla A }|^2.
\end{multline}
We complete the lemma with the following inequality.
\begin{multline}
  -4\bar K(2-\sigma)f_\sigma+4\beta{\bar K}^{2} \frac{(1-\sigma)f_{\sigma}}{\alpha|H|^{2}+\beta\bar{K}} \\
   \leq -4\bar K(2-\sigma)f_\sigma+4\bar K(1-\sigma)f_\sigma=-\bar Kf_\sigma.
\end{multline}
\end{proof}

Since the absolute term $\sigma |A|^2 f_\sigma$ in the evolution equation is positive, we cannot use the ordinary maximum principle. As in \cite{BN,H2}, we need the negative gradient terms to proceed the iteration. Contracting the Simons identity \cite{Si} with the second fundamental form $A$ we get
\begin{align}
\frac 12 \Delta | A | ^ 2 & =  \langle A , \nabla^2 H\rangle + | \nabla A |^2+ Z+2\bar K|\mathring A|^2, \label{Simons}
\end{align}
where
\begin{align*}
Z = \sum_{ i,j,p,\alpha,\beta } H_\alpha h _{ ip\alpha } h _{ ij \beta } h _{ pj \beta }  - \sum_{ \alpha,\beta } \bigg( \sum_{i,j} h_{ ij \alpha } h _{ ji \beta} \bigg)^2 - |{Rm}^{\perp}| ^ 2.
\end{align*}

The following lemma is the identity of $\Delta|A|^2$ for surfaces in the space form.
\begin{lemma}\label{Sim}Let $F_0 :M \rightarrow \mathbb{F}^{2+d}(\bar K)$ be a compact surface immersed in the space form. Then
\begin{equation}
 \frac12\Delta|A|^2= \langle A_ , \nabla^2 H\rangle +|\nabla A|^2+ 2K|\mathring A|^{2} -\sum_{ \alpha,\beta }{|R^\perp_{12 \alpha\beta}|}^2,
\end{equation}
where $K$ is the section curvature.

In particular when $d=2$, $$Z+2\bar K|\mathring A|^2=2K|\mathring A|^{2} -2{|K^{\perp}|}^2,$$
\end{lemma}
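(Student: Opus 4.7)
The plan is to start from the Simons identity \eqref{Simons} already recorded and reduce the stated formula to a purely algebraic identity for the reaction term $Z$. Writing $A_\alpha$ for the $2\times 2$ symmetric matrix $(h_{ij\alpha})$ and using the Gauss equation for a surface, $K = \bar K + \sum_\alpha \det A_\alpha$, together with the $n=2$ identity $|Rm^\perp|^2 = 2\sum_{\alpha,\beta}|R^\perp_{12\alpha\beta}|^2$ (from the antisymmetry of $R^\perp_{ij\alpha\beta}$ in $(i,j)$), the conclusion is equivalent to
\[
Z = 2\Big(\sum_\alpha \det A_\alpha\Big)|\mathring A|^2 - \sum_{\alpha,\beta}|R^\perp_{12\alpha\beta}|^2, \qquad (\ast)
\]
and it is this that I will verify.

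The key tool is the Cayley--Hamilton theorem for $2\times 2$ matrices, $M^2 = (\mathop{\mathrm{tr}} M) M - (\det M) I$, which I would apply twice. First, applied to $A_\beta$, it collapses the cubic piece of $Z$ into
\[
Z_1 := \sum_{\alpha,\beta} H_\alpha \mathop{\mathrm{tr}}(A_\alpha A_\beta^2) = \sum_{\alpha,\beta} H_\alpha H_\beta \mathop{\mathrm{tr}}(A_\alpha A_\beta) - |H|^2 \sum_\alpha \det A_\alpha.
\]
Second, applied to the product $A_\alpha A_\beta$, together with the commutator identity $\mathop{\mathrm{tr}}(A_\alpha^2 A_\beta^2) - \mathop{\mathrm{tr}}(A_\alpha A_\beta A_\alpha A_\beta) = |R^\perp_{12\alpha\beta}|^2$ (which follows from $[A_\alpha,A_\beta]_{12} = R^\perp_{12\alpha\beta}$ and the fact that a $2\times 2$ antisymmetric matrix squares to a scalar multiple of $I$), it reduces the quartic piece $Z_2 := \sum_{\alpha,\beta} (\mathop{\mathrm{tr}}(A_\alpha A_\beta))^2$ to a combination of $\sum H_\alpha H_\beta \mathop{\mathrm{tr}}(A_\alpha A_\beta)$, $|H|^2 \sum_\alpha \det A_\alpha$, $(\sum_\alpha \det A_\alpha)^2$, and $\sum_{\alpha,\beta}|R^\perp_{12\alpha\beta}|^2$.

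Substituting these into $Z = Z_1 - Z_2 - |Rm^\perp|^2$, the cross term $\sum H_\alpha H_\beta \mathop{\mathrm{tr}}(A_\alpha A_\beta)$ cancels and the remainder factorises to $\bigl(|H|^2 - 4\sum_\alpha \det A_\alpha\bigr)\sum_\alpha \det A_\alpha - \sum_{\alpha,\beta}|R^\perp_{12\alpha\beta}|^2$. The elementary identity $|H|^2 - 4\sum_\alpha \det A_\alpha = 2|\mathring A|^2$, a consequence of $\mathop{\mathrm{tr}}(A_\alpha^2) = H_\alpha^2 - 2\det A_\alpha$ and the definition of $|\mathring A|$, then yields $(\ast)$. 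The main obstacle will be the algebraic bookkeeping: keeping track of the various traces so that the $\sum H_\alpha H_\beta \mathop{\mathrm{tr}}(A_\alpha A_\beta)$ contributions from $Z_1$ and $Z_2$ appear with exactly matching coefficients and cancel. The subsidiary case $d=2$ then follows immediately from $(\ast)$, since the only non-vanishing components of $R^\perp_{12\alpha\beta}$ occur at $(\alpha,\beta) \in \{(3,4),(4,3)\}$ and give $\sum_{\alpha,\beta}|R^\perp_{12\alpha\beta}|^2 = 2|K^\perp|^2$.
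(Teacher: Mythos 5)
Your proposal is correct, but it takes a genuinely different route from the paper. The paper re-derives the Laplacian of $A$ for $n=2$ from scratch: starting from $\Delta h_{ij\alpha}=h_{kijk\alpha}$, it applies the Ricci commutation identities and then substitutes the two-dimensional formula $R_{ijkl}=K(g_{ik}g_{jl}-g_{il}g_{jk})$ directly into the intrinsic curvature terms, so that the terms $2K h_{ij\alpha}-KH_\alpha g_{ij}$ appear and the remaining normal-curvature commutator contracts to $-\sum_{\alpha,\beta}|R^\perp_{12\alpha\beta}|^2$. You instead treat the general-$n$ Simons formula (already recorded as equation $\eqref{Simons}$, with the reaction term $Z$) as a black box and convert the claim into the purely algebraic identity
\begin{equation*}
Z = 2\Big(\sum_\alpha\det A_\alpha\Big)|\mathring A|^2-\sum_{\alpha,\beta}|R^\perp_{12\alpha\beta}|^2,
\end{equation*}
which you verify by two applications of Cayley--Hamilton for $2\times2$ matrices plus the commutator fact $\operatorname{tr}(A_\alpha^2A_\beta^2)-\operatorname{tr}(A_\alpha A_\beta A_\alpha A_\beta)=|R^\perp_{12\alpha\beta}|^2$ and the elementary relation $|H|^2-4\sum_\alpha\det A_\alpha=2|\mathring A|^2$. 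I checked this computation and it closes: $Z_1=\sum H_\alpha H_\beta\operatorname{tr}(A_\alpha A_\beta)-|H|^2\sum\det A_\alpha$, while $Z_2=\sum H_\alpha H_\beta\operatorname{tr}(A_\alpha A_\beta)-2|H|^2\sum\det A_\alpha+4(\sum\det A_\alpha)^2-\sum|R^\perp_{12\alpha\beta}|^2$, so the cross terms cancel and $Z_1-Z_2-|Rm^\perp|^2$ factors exactly as claimed; adding $2\bar K|\mathring A|^2$ and using the Gauss equation $K=\bar K+\sum_\alpha\det A_\alpha$ gives the stated identity, and the $d=2$ specialisation is immediate. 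Your route is attractive because it isolates the low-dimensional content of the lemma as a statement of linear algebra and avoids redoing any tensor-calculus bookkeeping (which in the paper's proof is actually somewhat compressed: the commutation $h_{kijk\alpha}\to h_{kkij\alpha}$ requires one more Ricci-identity application than is displayed); the paper's route is more self-contained in that it does not rely on the general Simons formula being quoted.
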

\begin{proof}
When $n=2$, $R_{ijkl}=K(g_{ik}g_{jl}-g_{il}g_{jk})$, where the section curvature $$K=\bar K+\frac{|H|^2}{4}-\frac{|A^{\circ}|^2}{2}.$$ Then
\begin{eqnarray}\label{Simons}
  \Delta h_{ij\alpha} &=& h_{ijkk\alpha}=h_{kijk\alpha} \\
   &=& h_{kikj\alpha}+h_{li\alpha}R_{lkjk}+h_{kl\alpha}R_{lijk}-h_{ki\beta}R_{jk\alpha\beta} \nonumber\\
   &=& h_{kkij\alpha} +2Kh_{ij\alpha}-KH_{\alpha}g_{ij}-h_{ki\beta}R_{jk\alpha\beta}. \nonumber
\end{eqnarray}
Multiplying (\ref{Simons}) by $h_{ij\alpha}$,
\begin{eqnarray}
 \frac12\Delta|A|^2&=&\langle \Delta A , A \rangle+|\nabla A|^2 \\
 &=& \langle A , \nabla^2 H\rangle +|\nabla A|^2+ 2K|\mathring A|^{2} -\sum_{ \alpha,\beta }{|R^\perp_{12 \alpha\beta}|}^2.\nonumber
\end{eqnarray}
\end{proof}

\begin{lemma}\label{Z2}Let $F_0 :M \rightarrow \mathbb{S}^{4}(\frac{1}{\sqrt{\bar K}})$ be a compact surface immersed in the sphere with
constant curvature $\bar K$. If $F$ satisfies pinching condition (\ref{cons}), then there exists a strictly positive constant $\epsilon_1$ such that
\begin{equation}\label{ZZ}
  2K|\mathring A|^{2} -2|K^{\perp}|^2\geq
           \epsilon_1(|\mathring A|^2+2\gamma|K^\perp|)(\alpha|H|^2+\beta \bar{K}).
\end{equation}
\end{lemma}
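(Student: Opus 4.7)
The plan is to bound the left-hand side directly using the Gauss equation for surfaces, the codimension-two estimate $2|K^\perp|\le |\mathring A|^2$ (from \eqref{Ka}), and the pinching condition preserved by Proposition \ref{baochi}. Recall that for the sphere $\beta=4\alpha$ where $\alpha=k-\tfrac12$, and that pinching reads $|\mathring A|^2+2\gamma|K^\perp|\le \alpha|H|^2+\beta\bar K$ after writing $|A|^2=|\mathring A|^2+|H|^2/2$.

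First I would control the subtracted term. From $2|K^\perp|\le|\mathring A|^2$ one gets $2|K^\perp|^2\le \tfrac12|\mathring A|^4$, and invoking pinching once more gives
\begin{equation*}
2|K^\perp|^2 \le \tfrac12|\mathring A|^2(\alpha|H|^2+\beta\bar K).
\end{equation*}
Next I would bound $2K$ from below via the Gauss equation $2K=2\bar K+\tfrac12|H|^2-|\mathring A|^2$ and pinching:
\begin{equation*}
2K \ge (\tfrac12-\alpha)|H|^2 + (2-\beta)\bar K + 2\gamma|K^\perp|.
\end{equation*}
The key algebraic observation is that $\beta=4\alpha$ makes the first two terms proportional to $\alpha|H|^2+\beta\bar K$: namely $(\tfrac12-\alpha)(|H|^2+4\bar K) = \frac{1/2-\alpha}{\alpha}(\alpha|H|^2+\beta\bar K)$.

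Multiplying the lower bound for $2K$ by $|\mathring A|^2$ and subtracting the upper bound for $2|K^\perp|^2$ then yields
\begin{equation*}
2K|\mathring A|^2 - 2|K^\perp|^2 \ge \frac{1-3\alpha}{2\alpha}|\mathring A|^2(\alpha|H|^2+\beta\bar K) + 2\gamma|K^\perp||\mathring A|^2,
\end{equation*}
since $\tfrac{1/2-\alpha}{\alpha}-\tfrac12 = \tfrac{1-3\alpha}{2\alpha}$. Finally, I would pass from the prefactor $|\mathring A|^2$ to $|\mathring A|^2+2\gamma|K^\perp|$: because $2\gamma|K^\perp|\le\gamma|\mathring A|^2$, we have $|\mathring A|^2\ge \frac{1}{1+\gamma}(|\mathring A|^2+2\gamma|K^\perp|)$. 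Discarding the nonnegative term $2\gamma|K^\perp||\mathring A|^2$, this gives the desired estimate with
\begin{equation*}
\epsilon_1 = \frac{1-3\alpha}{2\alpha(1+\gamma)}.
\end{equation*}

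The one thing to verify is that $\epsilon_1>0$ uniformly. Since $\tfrac12<k\le\tfrac{29}{40}$, we have $0<\alpha\le\tfrac{9}{40}$ and thus $1-3\alpha\ge\tfrac{13}{40}>0$, so $\epsilon_1$ is a strictly positive constant. The only real obstacle is bookkeeping: one has to check that the arithmetic miracle $\beta=4\alpha$ lets the $|H|^2$ and $\bar K$ contributions combine into a single multiple of $\alpha|H|^2+\beta\bar K$, and that the $k$-range in \eqref{gamma} keeps $1-3\alpha$ bounded away from zero (the borderline $k=\tfrac56$ lies well beyond $\tfrac{29}{40}$). The hyperbolic case will require a different choice of $\beta$, which is why the analogous lemma there will be handled separately.
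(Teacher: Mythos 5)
Your proposal is correct and follows essentially the same route as the paper: both bound $|K^\perp|^2$ by a multiple of $|\mathring A|^4$ via \eqref{Ka}, use the Gauss equation $K=\bar K+\tfrac14|H|^2-\tfrac12|\mathring A|^2$, invoke the pinching condition $|\mathring A|^2+2\gamma|K^\perp|\le\alpha|H|^2+\beta\bar K$, and finally replace $|\mathring A|^2$ by $(1+\gamma)^{-1}(|\mathring A|^2+2\gamma|K^\perp|)$. Your presentation makes a bit more explicit the algebraic role of $\beta=4\alpha$ in collapsing the $|H|^2$ and $\bar K$ contributions into a single multiple of $\alpha|H|^2+\beta\bar K$, whereas the paper instead factors as $2|\mathring A|^2(K-\tfrac{1-k}{4k-2}|\mathring A|^2)$ and checks the sign of the resulting coefficient, but this is only bookkeeping — the two arguments are the same.
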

\begin{proof}
From (\ref{Ka}) we have
\begin{equation}
  |K^\perp|^2\leq \frac{1}{4}|\mathring A|^4\leq\frac{1-k}{4k-2}|\mathring A|^4, \ for \ \frac12<k\leq\frac34.
\end{equation}
Thus
\begin{equation}
  2K|\mathring A|^{2} -2|K^{\perp}|^2\geq2|\mathring A|^2(K-\frac{1-k}{4k-2}|\mathring A|^2).
\end{equation}
The pinching condition (\ref{cons}) implies
\begin{eqnarray}
  K-\frac{1-k}{4k-2}|\mathring A|^2 &\geq& \bar K+\frac{1}{2}(\frac12|H|^2-|\mathring A|^2)-\frac{1-k}{4k-2}|\mathring A|^2 \\
   &\geq& \frac{3k-1-2k^2}{4(2k-1)}|H|^2+(1-\frac{k}{4k-2}\beta)\bar K \nonumber\\
  &\geq&  \frac{1-k}{4}|H|^2 +(1-\frac{k}{4k-2}\beta)\bar K \nonumber\\
&\geq& (1-k)(\alpha|H|^2+\beta \bar{K}).\nonumber
\end{eqnarray}
\end{proof}

\begin{remark}\label{Zh}When $\bar{K}<0$, under the pinching condition (\ref{conh}) we have
\begin{equation}
  2K|\mathring A|^{2} -2|K^{\perp}|^2\geq
            \epsilon_2(|\mathring A|^2+2\gamma|K^\perp|)|H|^2.\end{equation}
\end{remark}
In fact, the pinching condition implies,
\begin{eqnarray*}
  K-\frac{1-k}{4k-2}|\mathring A|^2
   &\geq& \frac{3k-1-2k^2}{4(2k-1)}|H|^2+(1-\frac{k}{4k-2}\beta)\bar K \\
  &\geq&  \frac{1-k}{4}|H|^2.
\end{eqnarray*}

Now we can get the required Poincar\'{e} inequality.
\begin{lemma}\label{Poincare1}
For every $p\geq 2$ and $\eta>0$ we have the estimate
\begin{eqnarray}
&&\int_{M_t} f ^ p _ \sigma (\alpha|H|^2+\beta \bar{K}) d \mu _t  \\
&&\leq \frac { 3p\eta + 12 }{ \epsilon_1 } \int_{M_t} \frac { f ^ { p-1 }_ \sigma}{ (\alpha| H| ^ 2 +\beta\bar{K})^{1- \sigma }} |\nabla A | ^ 2 d \mu _ t  + \frac{2(p-1)}{\epsilon_1\eta}\int_{M_t} f_{\sigma}^{p-2}|{\nabla f_{\sigma}}|^2 \, d\mu_t.\nonumber
\end{eqnarray}
\end{lemma}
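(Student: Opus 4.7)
The plan is to use Lemma \ref{Z2} to convert the LHS integrand into one involving the Simons-type quantity $2K|\mathring A|^2 - 2|K^\perp|^2$, then apply Lemma \ref{Sim} to express this quantity in terms of $\Delta|A|^2$, $\langle A,\nabla^2 H\rangle$ and $|\nabla A|^2$, and finally integrate by parts to produce only gradient quantities on the right. Concretely, Lemma \ref{Z2} gives
\begin{equation*}
f_\sigma^p(\alpha|H|^2+\beta\bar K) \;=\; \frac{f_\sigma^{p-1}(|\mathring A|^2+2\gamma|K^\perp|)(\alpha|H|^2+\beta\bar K)}{(\alpha|H|^2+\beta\bar K)^{1-\sigma}} \;\leq\; \frac{1}{\epsilon_1}\,u\bigl(2K|\mathring A|^2 - 2|K^\perp|^2\bigr),
\end{equation*}
where $u := f_\sigma^{p-1}(\alpha|H|^2+\beta\bar K)^{\sigma-1}$. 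Substituting the Simons identity of Lemma \ref{Sim} converts $2K|\mathring A|^2 - 2|K^\perp|^2$ into $\tfrac12\Delta|A|^2 - |\nabla A|^2 - \langle A, \nabla^2 H\rangle$.

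The next step is to integrate over $M_t$. The $\Delta|A|^2$ term is moved to the gradient by $\int u \Delta|A|^2 = -\int \langle \nabla u, \nabla|A|^2\rangle$. For the $\nabla^2 H$ term I would integrate by parts once and use the Codazzi identity $\nabla_i A_{ij} = \nabla_j H$, which yields $\int u\langle A,\nabla^2 H\rangle = -\int u|\nabla H|^2 - \int \langle \nabla u\cdot A, \nabla H\rangle$. Combining everything, the favourable piece $+\int u|\nabla H|^2$ generated from the Codazzi step combines with the Simons term $-\int u|\nabla A|^2$ via the pointwise estimate $|\nabla H|^2 \leq \tfrac{3}{2}|\nabla A|^2$ from \eqref{eqn_gradient1}, contributing a bounded multiple of $\int u|\nabla A|^2$ to the final coefficient.

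It remains to control the two cross terms $\int \langle \nabla u, \nabla|A|^2\rangle$ and $\int \langle \nabla u\cdot A, \nabla H\rangle$. Expanding $\nabla u = (p-1)f_\sigma^{p-2}(\alpha|H|^2+\beta\bar K)^{\sigma-1}\nabla f_\sigma + (\sigma-1)\alpha f_\sigma^{p-1}(\alpha|H|^2+\beta\bar K)^{\sigma-2}\nabla|H|^2$ produces a principal piece linear in $\nabla f_\sigma$ and a secondary piece linear in $\nabla|H|^2$. On the principal piece I will apply Cauchy--Schwarz followed by Young's inequality with parameter $\eta$, splitting $ab \leq \tfrac{\eta}{2}a^2 + \tfrac{1}{2\eta}b^2$, which generates exactly the $p\eta$-dependence on $|\nabla A|^2$ and the $(p-1)/\eta$-dependence on $|\nabla f_\sigma|^2$ that appear in the stated inequality. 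The secondary piece, using $|\nabla|H|^2| \leq 2|H||\nabla H|$ together with the elementary bound $\alpha|H|^2 \leq \alpha|H|^2 + \beta\bar K$ (valid since the denominator is positive by Proposition \ref{baochi}), is absorbed into the constant $12$.

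The main technical obstacle is bookkeeping: several gradient contributions (two from $\Delta|A|^2$, two from $\nabla^2 H$, each with a principal and a secondary piece from $\nabla u$) must be tracked simultaneously, and the splitting parameter in Young's inequality must be chosen so that $\eta$ multiplies only $|\nabla A|^2$ and not $|\nabla f_\sigma|^2$. The structural argument mirrors that of \cite{BN,H2,B}; the only genuinely new feature in the spherical setting is the extra $\beta\bar K$ term in the denominator, which causes no difficulty because Proposition \ref{baochi} guarantees $\alpha|H|^2 + \beta\bar K > 0$ along the flow.
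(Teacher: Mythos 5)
Your overall strategy—invoke Lemma \ref{Z2} to convert the left-hand integrand into the Simons-type quantity, substitute Lemma \ref{Sim}, integrate by parts, then close with Young's inequality—is the same engine that drives the paper's proof. The route through it differs: the paper first assembles the full formula for $\Delta f_{\sigma}$ (equation~\eqref{f1}), integrates $f_{\sigma}^{p-1}\Delta f_{\sigma}$ to obtain $-(p-1)\int f_{\sigma}^{p-2}|\nabla f_{\sigma}|^2$ cleanly, and then must separately treat the extra $\Delta|K^{\perp}|$ term by a second integration by parts using~\eqref{Kac}. You short-circuit that by integrating $u\,\Delta|A|^2$ directly against $u=f_{\sigma}^{p-1}(\alpha|H|^2+\beta\bar K)^{\sigma-1}$, so the normal-curvature contribution stays packaged inside $|\nabla f_{\sigma}|$ and never has to be unpacked.

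However, as written, the cross-term estimate has a genuine gap. After $\int u\,\Delta|A|^2 = -\int\langle\nabla u,\nabla|A|^2\rangle$, you propose to bound $|\nabla|A|^2|$ by $2|A||\nabla A|$ and then apply Young's inequality to the principal $\nabla f_{\sigma}$-piece of $\nabla u$. Carrying this out, the $|\nabla A|^2$-half of Young's produces the factor
\begin{equation*}
\frac{|A|^2}{|\mathring A|^2+2\gamma|K^{\perp}|},
\end{equation*}
which is unbounded: it blows up wherever $M_t$ is close to umbilic. The inequality being proved holds uniformly for all $t$, so this cannot be absorbed into a constant. The fix—implicit in the paper's decision to work with $\mathring A$ and $\Delta f_{\sigma}$ from the outset—is to split $A=\mathring A+\tfrac{H}{2}g$, whence $\nabla|A|^2=\nabla|\mathring A|^2+\tfrac12\nabla|H|^2$, and to check that the trace contribution $\tfrac12\nabla|H|^2$ in $\langle\nabla u,\nabla|A|^2\rangle$ is cancelled against the trace part $\tfrac{H}{2}g$ of $A$ in the Codazzi cross term $\int\langle\nabla u\cdot A,\nabla H\rangle$. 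Once these cancel, only $|\mathring A|$-weighted cross terms remain, and the ratio becomes $|\mathring A|^2/(|\mathring A|^2+2\gamma|K^{\perp}|)\le 1$ (using $\gamma\ge 0$), so Young's inequality gives exactly the $p\eta$- and $(p-1)/\eta$-structure of the statement. You need to supply this cancellation explicitly; bounding $|\nabla|A|^2|\le 2|A||\nabla A|$ is not enough. The same issue recurs in your ``secondary piece'' coming from the $\nabla|H|^2$-part of $\nabla u$, which again must be paired with $|\mathring A|$ rather than $|A|$ to land in the constant $12$. (A minor point: \eqref{eqn_gradient1} gives $|\nabla H|^2\le\tfrac43|\nabla A|^2$, not $\tfrac32|\nabla A|^2$; this does not affect the structure but changes the constant.)
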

\begin{proof}
From Lemma \ref{Sim}, we have
\begin{align}\label{f1}
        \Delta f_{\sigma} &= \frac{2\big\langle \mathring A_{ij}, \nabla_i\nabla_j H \big\rangle}{(\alpha|H|^{2}+\beta\bar{K})^{1-\sigma}} + \frac{ 2|\nabla \mathring A |^2  }{ (\alpha|H|^{2}+\beta\bar{K})^{1-\sigma}} +  \frac{2(2K|\mathring A|^{2} -2|K^{\perp}|^2 )}{(\alpha|H|^{2}+\beta\bar{K})^{1-\sigma}} \\
        &\quad  - \frac{2\alpha(1-\sigma)}{\alpha|H|^2+\beta \bar{K}} \langle \nabla_i |H|^2, \nabla_i f_{\sigma} \rangle + \frac{{\alpha}^{2}\sigma(1-\sigma)}{(\alpha|H|^{2}+\beta\bar{K})^2} f_{\sigma}|\nabla |H|^2|^2 \nonumber \\
        &\quad + \frac{ 2\gamma \Delta|K^{\perp}| }{ (\alpha|H|^{2}+\beta\bar{K})^{1-\sigma} }-\frac{ \alpha(1-\sigma)(| \mathring A|^2+2\gamma |K^{\perp}|)}{(\alpha|H|^{2}+\beta\bar{K})^{2-\sigma}}\Delta |H|^2.\nonumber
    \end{align}

\begin{align}\label{f11}
        \Delta f_{\sigma} &\geq \frac{2\big\langle \mathring A_{ij}, \nabla_i\nabla_j H \big\rangle}{(\alpha|H|^{2}+\beta\bar{K})^{1-\sigma}}  +  \frac{2(2K|\mathring A|^{2} -2|K^{\perp}|^2 )}{(\alpha|H|^{2}+\beta\bar{K})^{1-\sigma}} \\
        &\quad  - \frac{2\alpha(1-\sigma)}{\alpha|H|^2+\beta \bar{K}} \langle \nabla_i |H|^2, \nabla_i f_{\sigma} \rangle -\frac{ 2\alpha(1-\sigma)f_{\sigma}}{\alpha|H|^{2}+\beta\bar{K}}(|H|\Delta |H|) \nonumber \\
        &\quad + \frac{ 2\gamma \Delta|K^{\perp}| }{ (\alpha|H|^{2}+\beta\bar{K})^{1-\sigma} }+\frac{ 2(\frac14-\alpha\frac{ |\mathring A |^2 +2\gamma |K^{\perp}| }{\alpha|H|^{2}+\beta\bar{K}})|\nabla H |^2  }{ (\alpha|H|^{2}+\beta\bar{K})^{1-\sigma}}\nonumber \\
\quad &\geq \frac{2\big\langle \mathring A_{ij}, \nabla_i\nabla_j H \big\rangle}{(\alpha|H|^{2}+\beta\bar{K})^{1-\sigma}}  +  \frac{2(2K|\mathring A|^{2} -2|K^{\perp}|^2)}{(\alpha|H|^{2}+\beta\bar{K})^{1-\sigma}}-\frac{ 2\alpha(1-\sigma)f_{\sigma}}{\alpha|H|^{2}+\beta\bar{K}}(|H|\Delta |H|) \nonumber\\
&\quad- \frac{2\alpha(1-\sigma)}{\alpha|H|^2+\beta \bar{K}} \langle \nabla_i |H|^2, \nabla_i f_{\sigma} \rangle + \frac{ 2\gamma \Delta|K^{\perp}| }{ (\alpha|H|^{2}+\beta\bar{K})^{1-\sigma} }.\nonumber
    \end{align}
We multiply the equation (\ref{f11}) by $f_\sigma^{p-1}$ and integrate on both sides as Proposition 5.5 in \cite{B}, then
\begin{eqnarray}\label{p1}
  &&\int_{M_t} \frac{2f_{\sigma}^{p-1}(2K|\mathring A|^{2} -2|K^{\perp}|^2 )}{(\alpha|H|^{2}+\beta\bar{K})^{1-\sigma}} d\mu_t  \\
   &\leq& (3p\eta + 10 ) \int_{M_t} \frac { f ^ { p-1 }_ \sigma}{ (\alpha| H| ^ 2 +\beta\bar{K})^{1- \sigma }} |\nabla H | ^ 2 d \mu _ t  + \frac{3(p-1)}{\eta}\int_{M_t} f_{\sigma}^{p-2}|{\nabla f_{\sigma}}|^2d\mu_t \nonumber \\
   &&  -\int_{M_t}\frac { 2\gamma \Delta|K^{\perp}| }{ (\alpha|H|^{2}+\beta\bar{K})^{1-\sigma} }d \mu_ t. \nonumber
\end{eqnarray}
From Lemma \ref{Z2}, we have
\begin{eqnarray*}
  \int_{M_t} \frac{2f_{\sigma}^{p-1}(2K|\mathring A|^{2} -2|K^{\perp}|^2)}{(\alpha|H|^{2}+\beta\bar{K})^{1-\sigma}} d\mu_t &\geq& \int_{M_t} \frac{2f_{\sigma}^{p-1}(\alpha H^2+\beta \bar{K})(|\mathring A|^2+2\gamma|K^\perp|)\epsilon_1}{(\alpha|H|^{2}+\beta\bar{K})^{1-\sigma}} d\mu_t \\
   &=& \int_{M_t} 2f_{\sigma}^{p}(\alpha H^2+\beta \bar{K})\epsilon_1 d\mu_t.
\end{eqnarray*}
Next we just deal with the last term of (\ref{p1}) with the normal curvature.
\begin{eqnarray}
   && \ \ \int_{M_t}- \frac{ 2f_{\sigma}^{p-1} \Delta |K^{\perp}| }{ (\alpha|H|^2+\beta\bar{K})^{1-\sigma} } \, d\mu_t \\
   &=&  \int_{M_t} 2\nabla_i \left( \frac{ f^{p-1} }{ (\alpha|H|^2+\beta\bar{K})^{1-\sigma} } \right) \nabla_i |K^{\perp}| \, d\mu_t  \nonumber\\
   &=&  2(p-1)\int_{M_t} \frac{ f_{\sigma}^{p-2} \nabla_i f_{\sigma} \nabla_i |K^{\perp}| }{ (\alpha|H|^2+\beta\bar{K})^{1-\sigma} } \, d\mu_t - 4\alpha(1-\sigma)\int_{M_t} \frac{ f_{\sigma}^{p-1} |H|\nabla_i |H| \nabla_i |K^{\perp}| }{ (\alpha|H|^2+\beta\bar{K})^{2-\sigma} } \, d\mu_t. \nonumber
\end{eqnarray}
From (\ref{Kac}) and the Cauchy-Schwarz inequality, we have
\begin{eqnarray}
  &&-\int_{M_t} \frac{ 2f^{p-1} \Delta |K^{\perp}| }{(\alpha|H|^2+\beta\bar{K})^{1-\sigma} } d\mu_t  \\
   &\leq& \frac{ (p-1) }{ \eta } \int_{M_t}  f_{\sigma}^{p-2} | \nabla f_{\sigma} |^2 \, d\mu_t +  (p-1)\eta  \int_{M_t}  \frac{ f_{\sigma}^{p-1} | \nabla A |^2 }{ (\alpha|H|^2+\beta\bar{K})^{1-\sigma} } \, d\mu_t \nonumber\\
   && +16\sqrt{\frac{4\alpha}{3}}\int_{M_t} \frac{ f_{\sigma}^{p-1} \sqrt{\alpha}|H||\mathring A|| \nabla A |^2 }{ (\alpha|H|^2+\beta\bar{K})^{2-\sigma} } \, d\mu_t   \nonumber\\
 &\leq& \frac{ (p-1) }{ \eta } \int_{M_t}  f_{\sigma}^{p-2} | \nabla f_{\sigma} |^2 \, d\mu_t +  \big((p-1)\eta+10\big)  \int_{M_t}  \frac{ f_{\sigma}^{p-1} | \nabla A |^2 }{ (\alpha|H|^2+\beta\bar{K})^{1-\sigma} } \ d\mu_t. \nonumber
\end{eqnarray}
The lemma follows by combining these estimates together.
\end{proof}

Now we show that the $L^p$-norm of $f_\sigma$ is bounded for sufficiently large $p$.
\begin{lemma}\label{lem1}For any $p\geq\frac{12}{\epsilon_\nabla}+1$ and
$\sigma\leq \frac{\epsilon_1\sqrt{\epsilon_\nabla}}{8\sqrt{3p}}$,
there exist a constant $C$ depending only on the initial surface such that for all
$t\in [0, T)$ where $T\leq\infty$, we have
\begin{eqnarray}\label{8-ineq}
\bigg(\int_{M_t}f_\sigma^pd\mu_t\bigg)^{\frac{1}{p}}\leq Ce^{-\bar{K}t}.
\end{eqnarray}
\end{lemma}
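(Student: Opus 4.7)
The plan is to establish the differential inequality
\begin{equation*}
\frac{d}{dt}\int_{M_t} f_\sigma^p\, d\mu_t \leq -p\bar K \int_{M_t} f_\sigma^p\, d\mu_t
\end{equation*}
and integrate in time, which gives $\int_{M_t} f_\sigma^p\, d\mu_t \leq \bigl(\int_{M_0} f_\sigma^p\, d\mu_0\bigr) e^{-p\bar K t}$. Taking the $p$-th root and bounding the initial integral in terms of the initial surface produces the claimed estimate.

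To reach this inequality I would differentiate under the integral sign using the standard identity $\partial_t(d\mu_t)=-|H|^2 d\mu_t$ together with the pointwise evolution from Lemma \ref{yanhua1}. Integrating the Laplacian term by parts against $pf_\sigma^{p-1}$ produces the negative contribution $-p(p-1)\int f_\sigma^{p-2}|\nabla f_\sigma|^2 d\mu_t$, while the reaction-gradient piece of Lemma \ref{yanhua1} provides the negative term $-2p\epsilon_\nabla\int f_\sigma^{p-1}|\nabla A|^2/(\alpha|H|^2+\beta\bar K)^{1-\sigma} d\mu_t$. Three further terms then have to be dealt with: the mixed drift term arising from $\langle \nabla|H|^2,\nabla f_\sigma\rangle/(\alpha|H|^2+\beta\bar K)$, the positive reaction term $2p\sigma\int |A|^2 f_\sigma^p d\mu_t$, and the decay term $-p\bar K\int f_\sigma^p d\mu_t$; the measure correction $-\int |H|^2 f_\sigma^p d\mu_t$ is manifestly non-positive and may be discarded. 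The mixed drift is controlled via Cauchy--Schwarz with $|\nabla|H|^2|\leq 2|H||\nabla A|$ and the bound $|H|^2/(\alpha|H|^2+\beta\bar K)\leq 1/\alpha$: applying Young's inequality with a small parameter, it splits into fractions of the two negative gradient contributions above, and the hypothesis $p\geq 12/\epsilon_\nabla+1$ is exactly what is needed for the absorption to leave a positive residue.

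The main obstacle is the positive term $2p\sigma\int |A|^2 f_\sigma^p d\mu_t$. Here I would use the preserved pinching from Proposition \ref{baochi} to bound $|A|^2\leq C(\alpha|H|^2+\beta\bar K)$, reducing the problem to estimating a constant multiple of $\int f_\sigma^p(\alpha|H|^2+\beta\bar K) d\mu_t$, and then invoke the Poincar\'e-type inequality of Lemma \ref{Poincare1} with the free parameter $\eta$ tuned of order $\sqrt{\epsilon_\nabla/p}$ so as to convert this weighted $L^p$ quantity back into the same two gradient integrals. The quantitative hypothesis $\sigma\leq \epsilon_1\sqrt{\epsilon_\nabla}/(8\sqrt{3p})$ is calibrated exactly so that the resulting coefficients---which behave like $p\sigma\sqrt{p/\epsilon_\nabla}/\epsilon_1$---are strictly dominated by $2p\epsilon_\nabla$ and $p(p-1)$ respectively, so that full absorption goes through with positive residue. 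This delicate balance, with Poincar\'e costs growing like $p^{3/2}$ against a gradient margin of order $p$, is the technical heart of the argument.

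Once the absorption is carried out, the only surviving terms on the right-hand side are $-p\bar K\int f_\sigma^p d\mu_t$ (coming from the $-\bar K f_\sigma$ contribution in Lemma \ref{yanhua1}) together with the non-positive $-\int |H|^2 f_\sigma^p d\mu_t$, which yields the desired differential inequality. A Gr\"onwall argument then completes the proof, with the constant $C$ identified as $(\int_{M_0} f_\sigma^p d\mu_0)^{1/p}$, which is controlled solely by the initial data since $f_\sigma$ is continuous on the compact initial surface.
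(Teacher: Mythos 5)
Your proposal follows the paper's argument essentially step for step: differentiate $\int f_\sigma^p\,d\mu_t$ using Lemma~\ref{yanhua1} and the measure evolution, discard the non-positive terms, split the drift term by Young's inequality (the paper picks $\mu=4/(p-1)$, matching your use of $p\geq 12/\epsilon_\nabla+1$), bound $|A|^2$ via the preserved pinching, absorb $2\sigma p\int(\alpha|H|^2+\beta\bar K)f_\sigma^p$ through Lemma~\ref{Poincare1} with $\eta=8\sigma/\epsilon_1$, and integrate the resulting inequality $\frac{d}{dt}\int f_\sigma^p\leq -p\bar K\int f_\sigma^p$. This is the same route the paper takes, so the proposal is correct.
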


\begin{proof}For $t\geq t_0$, form Lemma \ref{yanhua1}, we have
\begin{eqnarray}\label{9-ineq1}
  \frac{\partial}{\partial t}\int_{M_t}f_\sigma^pd\mu_t &\leq& \int_{M_t}pf_\sigma^{p-1}\frac{\partial}{\partial t}f_\sigma
d\mu_t \\
   &\leq& -p(p-1)\int_{M_t}f_\sigma^{p-2}|\nabla
f_\sigma|^2d\mu_t \nonumber \\
   && +4(1-\sigma)p\int_{M_t}\frac{\alpha f_\sigma^{p-1}}{\alpha|H|^2+\beta \bar{K}
}|H||\nabla|H|||\nabla f_\sigma| d\mu_t \nonumber\\
   && -2p\epsilon_\nabla
\int_{M_t}\frac{f_{\sigma}^{p-1}}{(\alpha|H|^2+\beta \bar{K})^{1-\sigma}}|\nabla
A|^2d\mu_t \nonumber\\
   && +2\sigma p
\int_{M_t}|A|^2f_\sigma^pd\mu_t-p\bar{K}\int_{M_t}f_\sigma^{p}d\mu_t. \nonumber
\end{eqnarray}
In view of the pinching condition we can estimate
\begin{eqnarray}\label{10-ineq1}
   && 4(1-\sigma)p\int_{M_t}\frac{\alpha f_\sigma^{p-1}}{\alpha |H|^2+\beta \bar{K}
}|H||\nabla|H|||\nabla f_\sigma| d\mu_t \\
   &\leq& \frac{2p}{\mu}\int_{M_t}f_\sigma^{p-2}|\nabla
f_\sigma|^2d\mu_t+3p\mu\int_{M_t}\frac{f_\sigma^{p-1}}{(\alpha|H|^2+\beta \bar{K})^{1-\sigma}}|\nabla A|^2d\mu_t. \nonumber
\end{eqnarray}
Substituting (\ref{10-ineq1}) to (\ref{9-ineq1}),  letting
$\mu=\frac{4}{p-1}$ and
$p\geq\frac{12}{\epsilon_\nabla}+1$  we obtain
\begin{eqnarray}
  \frac{\partial}{\partial t}\int_{M_t}f_\sigma^pd\mu_t  &\leq&
-\frac{p(p-1)}{2}\int_{M_t}f_\sigma^{p-2}|\nabla
f_\sigma|^2d\mu_t \\
   &&  -p\epsilon_\nabla
\int_{M_t}\frac{f_{\sigma}^{p-1}}{|H|^{2(1-\sigma)}}|\nabla
A|^2d\mu_t\nonumber \\
   &&  +2\sigma p
\int_{M_t}(\alpha|H|^2+\beta \bar{K})f_\sigma^pd\mu_t-p\bar{K}\int_{M_t}f_\sigma^{p}d\mu_t. \nonumber
\end{eqnarray}
This together with Lemma \ref{Poincare1} implies
\begin{eqnarray}\label{11-ineq1}
  &&\frac{\partial}{\partial t}\int_{M_t}f_\sigma^pd\mu_t \\
  &\leq& -p(p-1)\bigg(\frac{1}{2}-\frac{4\sigma
}{\eta \epsilon_1
}\bigg)\int_{M_t}f_\sigma^{p-2}|\nabla
f_\sigma|^2d\mu_t \nonumber\\
   && -\bigg(p\epsilon_\nabla-\frac{2\sigma p(3p\eta+12)}{\epsilon_1}\bigg)
\int_{M_t}\frac{f_{\sigma}^{p-1}}{(\alpha|H|+\beta \bar{K})^{1-\sigma}}|\nabla
A|^2d\mu_t \nonumber\\
   && -p\bar{K}\int_{M_t}f_\sigma^{p}d\mu_t. \nonumber
\end{eqnarray}
Now we pick
$\eta=\frac{8\sigma}{\epsilon_1}$ and let $\sigma\leq
\min\big\{\frac{\epsilon_1\sqrt{\epsilon_\nabla}}{8\sqrt{3p}}, \frac{\epsilon_1\epsilon_\nabla}{32}
\big\}=\frac{\epsilon_1\sqrt{\epsilon_\nabla}}{8\sqrt{3p}}$ such that
\begin{equation}
 2\sigma (3p\eta+12) \leq \epsilon_\nabla\epsilon_1=\frac14\epsilon_\nabla\epsilon_1+\frac34\epsilon_\nabla\epsilon_1.
\end{equation}
Then (\ref{11-ineq1}) reduces to
\begin{eqnarray*}
\frac{\partial}{\partial t}\int_{M_t}f_\sigma^pd\mu_t
\leq-p\bar{K}\int_{M_t}f_\sigma^{p}d\mu_t,
\end{eqnarray*}
and this implies
\begin{eqnarray}
\int_{M_t}f_\sigma^pd\mu_t\leq
e^{-p\bar{K}t}\int_{M_{t_0}}f_\sigma^{p}d\mu_t.
\end{eqnarray}
\end{proof}

Then we can proceed by a Stampacchia iteration procedure as in \cite{H1} to bound $f_\sigma$ in $L^\infty$ and complete the proof of Proposition \ref{san1}.

Here we need the following gradient estimation to compare the mean curvature at different points.

\begin{proposition}[\cite{B} Theorem 5.8]
For every $\eta>0$, there exists a constant $C_\eta$ depending only on $\eta$ such that for all time, there holds
$$|\nabla H|^2\leq (\eta |H|^4+C_\eta)e^{-\delta t/2}.$$
\end{proposition}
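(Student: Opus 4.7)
The plan is to adapt the classical Huisken gradient estimate, as carried out by Andrews--Baker and by Baker, to the present setting, using a scale-invariant test function together with a Stampacchia iteration analogous to the one in Lemma \ref{lem1}. The crucial input will be the exponential decay $|\mathring A|^2 + 2\gamma|K^\perp| \leq C(|H|^2+\bar K)^{1-\sigma}e^{-\delta t}$ already provided by Proposition \ref{san1}, which supplies precisely the smallness needed to dominate the reaction terms in the evolution of $|\nabla H|^2$.

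First, I would derive, by differentiating the evolution equation of $H$ in Lemma \ref{ee} and commuting covariant derivatives via the Ricci identities in the ambient space form, an equation of the schematic form
\[
\frac{\partial}{\partial t}|\nabla H|^2 = \Delta |\nabla H|^2 - 2|\nabla^2 H|^2 + R_4,
\]
where $R_4$ collects terms of type $A*A*|\nabla H|^2$, $A*\nabla A*\nabla H$, together with $\bar K$-corrections inherited from the ambient curvature. Then, for a small parameter $\sigma' \in (0,\sigma)$ to be chosen, I would introduce the scale-invariant function
\[
g_{\sigma'} = \frac{|\nabla H|^2}{(\alpha|H|^2+\beta\bar K)^{2-\sigma'}}
\]
and compute $\partial_t g_{\sigma'} - \Delta g_{\sigma'}$. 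Using the decay of $|\mathring A|^2$ from Proposition \ref{san1}, the rough cubic reaction $|A|^2 |\nabla H|^2$ splits into a dominant $|H|^2 |\nabla H|^2$ piece, which is exactly absorbed by the normalization factor, plus an exponentially small remainder.

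Following the strategy of Lemma \ref{lem1}, I would then establish a Poincar\'e-type inequality analogous to Lemma \ref{Poincare1} (with Kato's inequality $|\nabla|\nabla H||^2 \leq |\nabla^2 H|^2$ playing the role of the Simons identity step) and run a Stampacchia iteration on $g_{\sigma'}$. This should yield first an $L^p$ decay estimate, and then an $L^\infty$ bound of the form $g_{\sigma'} \leq C e^{-\delta t/2}$; translating back, this gives $|\nabla H|^2 \leq C(\alpha|H|^2+\beta\bar K)^{2-\sigma'} e^{-\delta t/2}$. A Young-type inequality $(\alpha|H|^2+\beta\bar K)^{2-\sigma'} \leq \eta' |H|^4 + C_{\eta'}$, valid since $\sigma' > 0$, finally produces the stated bound with $\eta$ proportional to $\eta'$. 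The main obstacle is to choose $\sigma'$ small enough and $p$ large enough so that all gradient reaction terms simultaneously retain a favorable sign; this mirrors the delicate balancing in Lemma \ref{lem1}, with the added complication that the cubic reaction terms in $A$ must now be dominated using the \emph{decaying} normalized quantity $|\mathring A|^2/(\alpha|H|^2+\beta\bar K)^{1-\sigma}$ rather than an absolute pointwise bound.
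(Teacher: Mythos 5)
This statement is cited from Baker's thesis, and the proof there is a parabolic maximum principle argument, not a Stampacchia iteration: one considers a test function of the schematic form
\[
\bigl(|\nabla H|^2 + N_1(a + |H|^2)\,(|\mathring A|^2 + 2\gamma|K^\perp|) - \eta|H|^4 - C_\eta\bigr)e^{\delta t/2},
\]
computes its evolution, and shows that the maximum principle preserves nonpositivity. The indispensable ingredient is the auxiliary term $N_1(a+|H|^2)(|\mathring A|^2+2\gamma|K^\perp|)$: the evolution of the pinching quantity contains, via inequality (\ref{eqn_gradient2}), a term $\le -\tfrac{2}{3}|\nabla A|^2$, which when multiplied by $(a+|H|^2)$ gives a large negative $-\tfrac{2}{3}N_1(a+|H|^2)|\nabla A|^2$ that swallows the cubic reaction $c(|A|^2+\bar K)|\nabla A|^2$ coming from $\partial_t|\nabla H|^2$ (using the pinching condition to turn $|A|^2$ into $k|H|^2 + \beta\bar K$). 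Proposition \ref{san1} supplies the smallness $|\mathring A|^2 + 2\gamma|K^\perp| \le C(|H|^2+\bar K)^{1-\sigma}e^{-\delta t}$ needed to keep the auxiliary term subordinate and to produce the $e^{-\delta t/2}$ decay.

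Your proposal takes a genuinely different route — Stampacchia on a normalized $g_{\sigma'} = |\nabla H|^2/(\alpha|H|^2+\beta\bar K)^{2-\sigma'}$ — and there is a real gap that would stop it. The Poincar\'e inequality in Lemma \ref{Poincare1} is driven by the Simons identity, which expresses $\Delta|\mathring A|^2$ in terms of $\langle A,\nabla^2 H\rangle + |\nabla A|^2 + Z$, and it is that elliptic structure (together with Lemma \ref{Z2}) that converts $\int f_\sigma^p\,|H|^2$ into weighted gradient integrals. There is no analogue of the Simons identity for $|\nabla H|^2$, and Kato's inequality $|\nabla|\nabla H||^2 \le |\nabla^2 H|^2$ points in the opposite direction and produces nothing integrable-by-parts. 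Without a Poincar\'e inequality for $g_{\sigma'}$, the iteration cannot close. Separately, the claim that the $|H|^2|\nabla H|^2$ reaction is ``exactly absorbed by the normalization'' is incorrect: dividing $R_4 \sim |A|^2|\nabla A|^2$ by $(\alpha|H|^2+\beta\bar K)^{2-\sigma'}$ yields a term of size $|A|^2|\nabla A|^2 / W^{2-\sigma'}$, which is \emph{not} pointwise comparable to $|A|^2 g_{\sigma'}$ (because (\ref{eqn_gradient1}) gives $|\nabla H|^2 \lesssim |\nabla A|^2$, not the reverse), while the ``good'' term $-aR_2\,g_{\sigma'}/W$ from the normalization is comparable only to $-|H|^2 g_{\sigma'}$. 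The two are structurally incommensurable, which is precisely why the classical argument introduces the extra $|\mathring A|^2$-type auxiliary term to furnish a $-|H|^2|\nabla A|^2$ donor. Also, $g_{\sigma'}$ is only scale-invariant when $\sigma' = 0$; for $\sigma' > 0$ it is not, though that is a minor point compared to the missing Poincar\'e step.
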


Now we can complete the proof of the theorem \ref {main2}.
\begin{proof}[proof of theorem \ref {main2}]
From the proof of lemma \ref{Z2} we know $K\geq \epsilon|H|^2$. Combing the gradient estimate we have $\frac{|H|_{min}}{|H|_{max}}\rightarrow 1$ and the diameter of $M_t$ is bounded. By similar arguments as in \cite{B},  we obtain the mean curvature flow with initial value $F$
either converges to a round point in finite time, or
converges to a total geodesic sphere of $\mathbb{S}^{4}(\frac{1}{\sqrt{\bar K}})$ as
$t\rightarrow\infty$.
\end{proof}

$ \ $
\section{Convergence theorem in the hyperbolic space}
In this section we prove the convergence theorem for surfaces that move along the mean curvature flow in $\mathbb{H}^4(\bar K)$. First we set $\tilde{f}_{\sigma} := ( |\mathring A|^2 + 2\gamma|K^{\perp}| )/ |H|^{2(1-\sigma)}$ and get the following proposition for the upper bound of $\tilde{f}_{\sigma}$.

\begin{proposition}\label{san2}
There exist constants $C < \infty$ and $\delta>0$ both depending only on the initial surface such that for all time $t \in [0,T)$ where $T<\infty$, we have the estimate

\begin{align}\label{pinch}
|\mathring A|^ 2 + 2\gamma |K^{\perp}| \leq  C |H| ^{ 2 -\delta}.
\end{align}
\end{proposition}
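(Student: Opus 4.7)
The plan is to mirror the argument of Proposition \ref{san1}, but with the auxiliary function $\tilde f_{\sigma}=(|\mathring A|^{2}+2\gamma|K^{\perp}|)/|H|^{2(1-\sigma)}$ replacing $f_{\sigma}$. Since we work in $\mathbb{H}^{4}(\bar K)$ with $\bar K<0$, the denominator no longer has a $\beta\bar K$ correction, but the pinching condition (\ref{conh}) together with $|H|^{2}+4\bar K>0$ (which is preserved, cf.\ the hyperbolic analogue of Proposition \ref{baochi} indicated in Remark \ref{Z0}(ii)) ensures $|H|$ stays strictly positive on $[0,T)$, so $\tilde f_{\sigma}$ is well-defined. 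Proving $\tilde f_{\sigma}\le C$ uniformly then yields the statement with $\delta=2\sigma$.

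First, I would derive the evolution equation for $\tilde f_{\sigma}$, repeating the computation of Lemma \ref{yanhua1} with the new denominator. The terms generated by $\bar K$ in Lemma \ref{ee} and Lemma \ref{eeK} now produce contributions of the form $-c\bar K\tilde f_{\sigma}$ (nonnegative, since $\bar K<0$) together with lower order terms controlled by $|\bar K|\tilde f_{\sigma}/|H|^{2}$, which in turn is bounded by a constant multiple of $|\bar K|\tilde f_{\sigma}$ thanks to the uniform lower bound on $|H|^{2}+4\bar K$. The essential negative gradient term $-2\epsilon_{\nabla}|\nabla A|^{2}/|H|^{2(1-\sigma)}$ survives as in Lemma \ref{yanhua1} once we substitute $\gamma=1-\tfrac{4}{3}k-\epsilon_{\nabla}$.

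Second, I would replace Lemma \ref{Z2} by Remark \ref{Zh}, which provides exactly the coercive lower bound needed in the hyperbolic case:
\begin{equation*}
2K|\mathring A|^{2}-2|K^{\perp}|^{2}\ge \epsilon_{2}(|\mathring A|^{2}+2\gamma|K^{\perp}|)|H|^{2}.
\end{equation*}
Using this together with Lemma \ref{Sim} and proceeding as in the proof of Lemma \ref{Poincare1} (multiplying the identity for $\Delta\tilde f_{\sigma}$ by $\tilde f_{\sigma}^{p-1}$, integrating by parts, and handling the normal curvature term via (\ref{Kac}) and Cauchy--Schwarz), one obtains a Poincar\'e-type inequality of the shape
\begin{equation*}
\int_{M_{t}}\tilde f_{\sigma}^{p}|H|^{2}\,d\mu_{t}\le C_{1}(p,\eta)\int_{M_{t}}\frac{\tilde f_{\sigma}^{p-1}|\nabla A|^{2}}{|H|^{2(1-\sigma)}}\,d\mu_{t}+\frac{C_{2}(p)}{\eta}\int_{M_{t}}\tilde f_{\sigma}^{p-2}|\nabla\tilde f_{\sigma}|^{2}\,d\mu_{t}.
\end{equation*}

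Third, combining the evolution equation with this Poincar\'e inequality and choosing $p\ge p_{0}(\epsilon_{\nabla})$ and $\sigma=\sigma(p)$ small enough (as in Lemma \ref{lem1}) to absorb the gradient and absolute terms, the differential inequality reduces to
\begin{equation*}
\frac{d}{dt}\int_{M_{t}}\tilde f_{\sigma}^{p}\,d\mu_{t}\le Cp|\bar K|\int_{M_{t}}\tilde f_{\sigma}^{p}\,d\mu_{t},
\end{equation*}
which gives $\|\tilde f_{\sigma}\|_{L^{p}(M_{t})}\le e^{C|\bar K|t}\|\tilde f_{\sigma}\|_{L^{p}(M_{0})}$. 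Since $T<\infty$, this is a genuine uniform $L^{p}$ bound. A Stampacchia iteration following Huisken \cite{H1} (analogously to the step used at the end of Proposition \ref{san1}) then upgrades this to the $L^{\infty}$ estimate $\tilde f_{\sigma}\le C$, completing the proof.

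The main obstacle is the sign of $\bar K$: in the sphere case the $\bar K$-contributions in the evolution equation produced decay factors $e^{-\bar K t}$, whereas here they give exponential \emph{growth}. This is precisely what forces the hyperbolic statement to be restricted to the finite maximal time $T<\infty$ and removes the possibility of a long-time alternative. A secondary technical point is making sure $|H|^{-2(1-\sigma)}$ is integrable against the reaction and gradient terms; this is handled by invoking the uniform lower bound on $|H|^{2}+4\bar K$ preserved under the flow.
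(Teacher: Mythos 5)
Your proposal matches the paper's own proof essentially step for step: the paper defines $\tilde f_\sigma = (|\mathring A|^2 + 2\gamma|K^\perp|)/|H|^{2(1-\sigma)}$, proves a hyperbolic analogue of Lemma \ref{yanhua1} carrying the term $-4\bar K(2-\sigma)\tilde f_\sigma$, replaces Lemma \ref{Z2} with Remark \ref{Zh} to obtain the Poincar\'e inequality with $\epsilon_2$, derives the same differential inequality $\frac{d}{dt}\int\tilde f_\sigma^p\,d\mu_t \le -8\bar Kp\int\tilde f_\sigma^p\,d\mu_t$, and observes that since $T<\infty$ the resulting $e^{-8\bar K p t}$ factor is bounded, before applying Stampacchia iteration. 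Your identification of the key difference from the spherical case — the $\bar K$-terms produce exponential growth rather than decay, so finiteness of $T$ is essential — is exactly what the paper relies on (invoking Lemma 4.5 of \cite{LXYZ} for $T<\infty$), and your handling of well-definedness via the lower bound on $|H|^2$ is correct.
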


The proof of Proposition \ref{san2} is similar as Proposition \ref{san1}. First we need the following two lemmas.
\begin{lemma}\label{yanhua}
For every $\sigma\in (0,1)$ and $\epsilon_\nabla=1-\frac{4}{3}k-\gamma$, we have the evolution equation
\begin{eqnarray}\label{inequ}
  \frac \partial { \partial t } \tilde{f} _{ \sigma} &\leq& \Delta \tilde{f} _\sigma + \frac{ 2 ( 1- \sigma) }{ |H|^2 } \langle \nabla _i |H|^2, \nabla _ i \tilde{f} _ \sigma \rangle - \frac { 2 \epsilon _\nabla } { |H| ^ { 2 ( 1-\sigma)} }| \nabla A | ^ 2 \\
   && + 2 \sigma | A| ^ 2 \tilde{f} _ \sigma-4\bar K(2-\sigma)\tilde{f}_\sigma.\nonumber
\end{eqnarray}
\end{lemma}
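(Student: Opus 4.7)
The plan is to mirror the proof of Lemma \ref{yanhua1}, replacing the sphere-case denominator $(\alpha|H|^2+\beta\bar K)^{1-\sigma}$ by the simpler hyperbolic denominator $|H|^{2(1-\sigma)}$ throughout. Two structural simplifications arise: all chain-rule factors of $\alpha$ disappear, and the $\bar K^2$ terms produced by the $\beta\bar K$ in the sphere-case denominator are absent. The only genuinely new feature is that the $\bar K$-linear terms now combine to $-4(2-\sigma)\bar K\tilde f_\sigma$ rather than to $-\bar K f_\sigma$.

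Writing $\tilde f_\sigma = N/D$ with $N = |\mathring A|^2 + 2\gamma|K^\perp|$ and $D = (|H|^2)^{1-\sigma}$, I would first compute $\partial_t N$ using Lemma \ref{ee} together with $|\mathring A|^2 = |A|^2 - \tfrac{1}{2}|H|^2$ (valid for $n=2$) and Lemma \ref{eeK}, then invoke the quotient rule
\[
\partial_t \tilde f_\sigma = \frac{\partial_t N}{|H|^{2(1-\sigma)}} - \frac{(1-\sigma)\tilde f_\sigma}{|H|^2}\,\partial_t |H|^2.
\]
Expanding $\Delta \tilde f_\sigma$ by the same pattern and subtracting, the mixed $\langle \nabla N, \nabla |H|^2\rangle$ terms rearrange into the drift $\tfrac{2(1-\sigma)}{|H|^2}\langle \nabla_i |H|^2, \nabla_i \tilde f_\sigma\rangle$, exactly as in the display (\ref{daoshu}). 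The aggregated reaction expression of the form $R_1 + \gamma R_3 - \bigl(\tfrac{|\mathring A|^2 + 2\gamma|K^\perp|}{|H|^2} + \tfrac{1}{2}\bigr)R_2$ is non-positive under the pinching condition (\ref{conh}): since $\bar K<0$ forces $|\mathring A|^2+2\gamma|K^\perp|\le \alpha|H|^2+\beta\bar K \le \alpha|H|^2$, the coefficient of $R_2$ is bounded by $\alpha+\tfrac{1}{2}=k$, and the quadratic-form analysis of Proposition \ref{baochi} then applies verbatim. The leftover $2\sigma R_2\tilde f_\sigma/|H|^2$ is absorbed into $2\sigma|A|^2\tilde f_\sigma$ via the Cauchy--Schwarz bound $R_2\le|A|^2|H|^2$, and the combined gradient contributions reduce to $-2\epsilon_\nabla |\nabla A|^2/|H|^{2(1-\sigma)}$ by the Baker--Nguyen gradient inequalities (\ref{eqn_gradient1})--(\ref{eqn_gradient3}) with $\gamma = 1 - \tfrac{4}{3}k - \epsilon_\nabla$.

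It remains to tally the $\bar K$-linear terms. A short computation using Lemma \ref{ee} shows that $\partial_t|\mathring A|^2$ contributes $4\bar K|H|^2 - 4\bar K|A|^2 - \tfrac{1}{2}\cdot 4\bar K|H|^2 = -4\bar K|\mathring A|^2$; combined with the $-8\gamma\bar K|K^\perp|$ from $2\gamma\,\partial_t|K^\perp|$ in Lemma \ref{eeK}, this sums to $-4\bar K N$ and produces $-4\bar K\tilde f_\sigma$ upon division by $D$. The quotient-rule correction $-\tfrac{(1-\sigma)\tilde f_\sigma}{|H|^2}\cdot 4\bar K|H|^2 = -4(1-\sigma)\bar K\tilde f_\sigma$ then combines with it to yield precisely $-4(2-\sigma)\bar K\tilde f_\sigma$, as claimed. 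I expect the main obstacle to be not any single computation but the bookkeeping in the previous step: without the $\beta\bar K$ cushion in the denominator one has to verify carefully that the coefficient of $R_2$ in the discarded quadratic form still satisfies the $\le k$ bound once $\beta\bar K<0$ is in place, so that the Proposition \ref{baochi} argument continues to deliver non-positivity; the absorption of $2\sigma R_2\tilde f_\sigma/|H|^2$ into $2\sigma|A|^2\tilde f_\sigma$, by contrast, reduces to the trivial inequality $|H|^2\le|H|^2$ and is automatic.
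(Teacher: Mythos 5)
Your proposal is correct and follows exactly the route the paper intends: the paper gives no separate proof of Lemma \ref{yanhua}, pointing instead to the sphere-case Lemma \ref{yanhua1}, and your derivation mirrors that proof with the denominator $(\alpha|H|^2+\beta\bar K)^{1-\sigma}$ replaced by $|H|^{2(1-\sigma)}$; the gradient-term bookkeeping, the Cauchy--Schwarz absorption of $2\sigma R_2\tilde f_\sigma/|H|^2$, and the discard of the $R_1+\gamma R_3-(\cdot)R_2$ reaction piece all proceed as in the sphere case. The one genuinely new piece of bookkeeping --- the $\bar K$-linear tally $-4\bar K N/D$ from the reaction terms combining with the quotient-rule correction $-4(1-\sigma)\bar K\tilde f_\sigma$ to give $-4(2-\sigma)\bar K\tilde f_\sigma$ --- you carry out correctly, so the proof matches the paper's intent.
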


\begin{lemma}\label{Poincare}
For every $p\geq 2$ and $\eta>0$ we have the estimate
\begin{eqnarray}
  \int_{M_t} \tilde{f} ^ p _ \sigma | H| ^ 2 d \mu _t  &\leq& \frac { 3p\eta + 12}{ \epsilon_2 } \int_{M_t} \frac { \tilde{f} ^ { p-1 }_ \sigma}{ | H| ^ { 2 ( 1- \sigma )}} |\nabla A | ^ 2 d \mu _ t \\
   &&  + \frac{2(p-1)}{\epsilon_2\eta}\int_{M_t} \tilde{f}_{\sigma}^{p-2}|{\nabla \tilde{f}_{\sigma}}|^2 \, d\mu_t. \nonumber
\end{eqnarray}
\end{lemma}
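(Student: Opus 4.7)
The plan is to mirror the proof of Lemma~\ref{Poincare1} line by line, with the weight $(\alpha|H|^2+\beta\bar K)^{1-\sigma}$ replaced throughout by $|H|^{2(1-\sigma)}$ and with Lemma~\ref{Z2} replaced by Remark~\ref{Zh}. Since the denominator no longer involves $\bar K$, several of the bookkeeping terms in \eqref{f1}--\eqref{f11} simplify, and the resulting left-hand reaction integral naturally produces $\int_{M_t}\tilde f_\sigma^p|H|^2\,d\mu_t$ rather than its sphere-case analogue.

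First I would apply Lemma~\ref{Sim} to express $\tfrac12\Delta|A|^2$, and compute $\Delta\tilde f_\sigma$ directly from the definition $\tilde f_\sigma = (|\mathring A|^2+2\gamma|K^\perp|)/|H|^{2(1-\sigma)}$. Discarding the non-negative term proportional to $\sigma(1-\sigma)\tilde f_\sigma|\nabla|H|^2|^2/|H|^4$ yields a pointwise inequality of the same shape as \eqref{f11}, but with $\alpha|H|^2+\beta\bar K$ replaced by $|H|^2$. I would then multiply by $\tilde f_\sigma^{p-1}$, integrate over $M_t$, and use integration by parts on the $\langle A,\nabla^2 H\rangle$, $|H|\Delta|H|$, and $\Delta|K^\perp|$ contributions. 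The elementary bound $|\mathring A|^2 \leq \tilde f_\sigma |H|^{2(1-\sigma)}$ (from $\gamma\geq 0$) together with the gradient estimate $|\nabla H|^2\leq \tfrac43|\nabla A|^2$ lets me dominate each interior integrand by $C\tilde f_\sigma^{p-1}|\nabla A|^2/|H|^{2(1-\sigma)} + C\tilde f_\sigma^{p-2}|\nabla \tilde f_\sigma|^2$ via Young's inequality with parameter $\eta$. Applying Remark~\ref{Zh} to the remaining reaction integrand produces the lower bound $2\epsilon_2\int_{M_t}\tilde f_\sigma^p |H|^2\,d\mu_t$ for the left-hand side, and transposition gives the stated inequality.

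The delicate step, exactly as in Lemma~\ref{Poincare1}, is the term $-\int_{M_t} \tilde f_\sigma^{p-1}\Delta|K^\perp|/|H|^{2(1-\sigma)}\,d\mu_t$. Integration by parts converts it into a sum of $\int \tilde f_\sigma^{p-2}\langle\nabla \tilde f_\sigma,\nabla|K^\perp|\rangle/|H|^{2(1-\sigma)}\,d\mu_t$ and $\int \tilde f_\sigma^{p-1}|H||\nabla H|\,|\nabla|K^\perp||/|H|^{2(2-\sigma)}\,d\mu_t$. Using \eqref{Kac} to replace $|\nabla|K^\perp||$ by $4|\mathring A||\nabla A|$, then $|\mathring A|\leq \tilde f_\sigma^{1/2}|H|^{1-\sigma}$, and finally Cauchy--Schwarz absorbs both into $|\nabla A|^2$ and $|\nabla \tilde f_\sigma|^2$ pieces with exactly the coefficients $(3p\eta+12)/\epsilon_2$ and $2(p-1)/(\epsilon_2\eta)$ claimed in the statement. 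Tracking these constants precisely is the main content of the routine computation; no new geometric input is needed beyond Remark~\ref{Zh} and the identity of Lemma~\ref{Sim}.
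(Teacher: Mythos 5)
Your proposal is correct and follows essentially the same route as the paper: the paper's own proof writes out $\Delta\tilde f_\sigma$ via Lemma~\ref{Sim}, multiplies by $\tilde f_\sigma^{p-1}$, integrates, invokes Remark~\ref{Zh} for the lower bound $\int 2\epsilon_2\tilde f_\sigma^p|H|^2\,d\mu_t$, and then simply states that ``the lemma follows by the proof of Lemma~\ref{Poincare1}.'' Your account supplies precisely the deferred bookkeeping (integration by parts on $\langle A,\nabla^2 H\rangle$, $|H|\Delta|H|$, and $\Delta|K^\perp|$; the bound $|\nabla|K^\perp||\leq 4|\mathring A||\nabla A|$ with $|\mathring A|\leq\tilde f_\sigma^{1/2}|H|^{1-\sigma}$; Young's inequality with parameter $\eta$) and reproduces the intended argument with the substitution $(\alpha|H|^2+\beta\bar K)\mapsto|H|^2$.
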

\begin{proof}
From Lemma \ref{Sim}, we have
\begin{align*}
        \Delta \tilde{f}_{\sigma} &= \frac{2\langle \mathring A_{ij}, \nabla_i\nabla_j H \rangle}{|{H}|^{2(1-\sigma)}} + \frac{ 2 |\nabla \mathring A |^2}{ |{H}|^{2(1-\sigma)}}  +  \frac{2(2K|\mathring A|^{2} -2|K^{\perp}|^2 )}{|{H}|^{2(1-\sigma)}} \\
        &\quad  - \frac{2(1-\sigma)}{|H|^2} \langle \nabla_i |H|^2, \nabla_i \tilde{f}_{\sigma} \rangle + \frac{\sigma(1-\sigma)}{(|H|^2)^2} \tilde{f}_{\sigma}|\nabla |H|^2|^2  \\
        &\quad - (1-\sigma)\tilde{f}_{\sigma}\frac{\Delta |H|^2}{|H|^2}+ \frac{ 2\gamma \Delta|K^{\perp}| }{ |H|^{2(1-\sigma) } }.
    \end{align*}
We then multiply by $\tilde{f}_\sigma^{p-1}$ on both sides and integrate. From Remark \ref{Zh}, we have
\begin{eqnarray*}
  \int_{M_t} \frac{2\tilde{f}_{\sigma}^{p-1}}{|{H}|^{2(1-\sigma)}}(2K|\mathring A|^{2} -2|K^{\perp}|^2 ) d\mu_t &\geq& \int_{M_t} \frac{2\tilde{f}_{\sigma}^{p-1}H^2(|\mathring A|^2+2\gamma|K^\perp|)\epsilon_2}{|{H}|^{2(1-\sigma)}} d\mu_t \\
   &\geq& \int_{M_t} 2\tilde{f}_{\sigma}^{p}H^2\epsilon_2 d\mu_t.
\end{eqnarray*}
Then the lemma follows by the proof of Lemma \ref{Poincare1}.
\end{proof}

Now we show that the $L^p$-norm of $\tilde{f}_\sigma$ is bounded for sufficiently large $p$.
\begin{lemma}\label{lem1}For any $p\geq\frac{12}{\epsilon_\nabla}+1$ and
$\sigma\leq \frac{\epsilon_2\sqrt{\epsilon_\nabla}}{8\sqrt{3p}}$,
there exist a constant $C$ depending only on the initial surface such that for all
$t\in [0,T)$ where $T<\infty$, we have
\begin{eqnarray}\label{8-ineq}
\bigg(\int_{M_t}\tilde{f}_\sigma^pd\mu_t\bigg)^{\frac{1}{p}}\leq C.
\end{eqnarray}
\end{lemma}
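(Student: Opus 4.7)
The plan is to follow the Stampacchia-type energy argument used in the sphere case (Lemma after Proposition \ref{san1}), with the two cosmetic changes that $\tilde f_\sigma$ replaces $f_\sigma$, $|H|^2$ replaces $\alpha|H|^2+\beta\bar K$, and the positive reaction term $-4\bar K(2-\sigma)\tilde f_\sigma$ (recall $\bar K<0$, so this is positive) replaces the sign-favourable term $-\bar K f_\sigma$.

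First, I would differentiate $\int_{M_t}\tilde f_\sigma^{p}\,d\mu_t$ in $t$, using the evolution $\partial_t d\mu_t=-|H|^2 d\mu_t$ together with Lemma \ref{yanhua}. Integration by parts on the $\Delta\tilde f_\sigma$ term produces the good term $-p(p-1)\int \tilde f_\sigma^{p-2}|\nabla\tilde f_\sigma|^2\,d\mu_t$. The cross gradient $\frac{2(1-\sigma)}{|H|^2}\langle\nabla|H|^2,\nabla\tilde f_\sigma\rangle$ is split by Young's inequality with parameter $\mu=\frac{4}{p-1}$ into a piece absorbed by the good term and a piece of the form $3p\mu\cdot\tilde f_\sigma^{p-1}|H|^{-2(1-\sigma)}|\nabla A|^2$, which is then absorbed into the $-2p\epsilon_\nabla$ factor of $|\nabla A|^2$ provided $p\ge\frac{12}{\epsilon_\nabla}+1$. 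At this stage only the $|A|^2$ and the $-4\bar K(2-\sigma)$ reaction terms remain as potential obstructions.

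Next, I would bound $|A|^2=|\mathring A|^2+\tfrac12|H|^2\le C|H|^2$ via the pinching condition (\ref{conh}), together with the hypothesis $|H|^2+4\bar K>0$ which follows from the preserved pinching in the hyperbolic case (see Remark \ref{Z0}(ii)). This converts the $2\sigma|A|^2\tilde f_\sigma$ contribution into a multiple of $\sigma\tilde f_\sigma^{p}|H|^2$, which is precisely the quantity controlled by the Poincar\'{e} inequality of Lemma \ref{Poincare}. Applying that lemma and then choosing $\eta=\tfrac{8\sigma}{\epsilon_2}$ and $\sigma\le\tfrac{\epsilon_2\sqrt{\epsilon_\nabla}}{8\sqrt{3p}}$ exactly as in the spherical case makes the coefficients in front of both $\int\tilde f_\sigma^{p-2}|\nabla\tilde f_\sigma|^2$ and $\int\tilde f_\sigma^{p-1}|H|^{-2(1-\sigma)}|\nabla A|^2$ nonpositive, and these two terms are discarded.

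What is left, in contrast to the spherical case, is not a decay inequality but a Gr\"onwall-type inequality
\[
\frac{d}{dt}\int_{M_t}\tilde f_\sigma^{p}\,d\mu_t \;\le\; C_{p,\sigma,\bar K}\int_{M_t}\tilde f_\sigma^{p}\,d\mu_t,
\]
with $C_{p,\sigma,\bar K}\le 4p(2-\sigma)|\bar K|$ coming from the leftover reaction term. Here is precisely where the hypothesis $T<\infty$ in Proposition \ref{san2} is essential: integrating the Gr\"onwall inequality over $[0,T)$ yields
\[
\Bigl(\int_{M_t}\tilde f_\sigma^{p}\,d\mu_t\Bigr)^{1/p}\le e^{C_{p,\sigma,\bar K}T/p}\Bigl(\int_{M_0}\tilde f_\sigma^{p}\,d\mu_0\Bigr)^{1/p}=:C,
\]
which is the claimed bound. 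The main technical obstacle is the bookkeeping needed in step two to verify that the $|A|^2$ reaction term in hyperbolic space really is controlled by $|H|^2$ alone (since $|H|^2+4\bar K>0$ is preserved and $|\mathring A|^2\lesssim|H|^2$ via the pinching), so that the Poincar\'{e} absorption argument of Lemma \ref{Poincare} goes through in the same shape as its spherical counterpart \ref{Poincare1}; the finiteness of $T$ then takes the place of the favourable sign $\bar K>0$ in the final step.
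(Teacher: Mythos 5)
Your plan is correct and follows essentially the same route as the paper: differentiate $\int\tilde f_\sigma^p\,d\mu_t$ using Lemma \ref{yanhua}, absorb the cross-gradient term via Young's inequality with $\mu=4/(p-1)$, replace $|A|^2$ by a multiple of $|H|^2$ using the pinching, apply the Poincar\'e inequality of Lemma \ref{Poincare} with $\eta=8\sigma/\epsilon_2$, and let the leftover $-4(2-\sigma)\bar Kp$ reaction term produce a Gr\"onwall inequality that is integrated over the finite interval $[0,T)$. The only cosmetic difference is that the paper rounds $4(2-\sigma)|\bar K|$ up to $8|\bar K|$ and cites Lemma 4.5 of \cite{LXYZ} for the finiteness of $T$, which you instead take as granted from the statement of Proposition \ref{san2}.
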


\begin{proof}For $t\geq t_0$, form Lemma \ref{yanhua}, we have
\begin{eqnarray}\label{9-ineq}
  \frac{\partial}{\partial t}\int_{M_t}\tilde{f}_\sigma^pd\mu_t&\leq& \int_{M_t}p\tilde{f}_\sigma^{p-1}\frac{\partial}{\partial t}\tilde{f}_\sigma
d\mu_t \\
  &\leq&\ -p(p-1)\int_{M_t}\tilde{f}_\sigma^{p-2}|\nabla
\tilde{f}_\sigma|^2d\mu_t \nonumber\\
   && +4(1-\sigma)p\int_{M_t}\frac{\tilde{f}_\sigma^{p-1}}{|H|^2
}|H||\nabla|H|||\nabla \tilde{f}_\sigma| d\mu_t \nonumber\\
   && -2p\epsilon_\nabla
\int_{M_t}\frac{\tilde{f}_{\sigma}^{p-1}}{|H|^{2(1-\sigma)}}|\nabla
A|^2d\mu_t \nonumber\\
   && +2\sigma p
\int_{M_t}|A|^2\tilde{f}_\sigma^pd\mu_t-4(2-\sigma)\bar{K}p\int_{M_t}\tilde{f}_\sigma^{p}d\mu_t. \nonumber
\end{eqnarray}
we have
\begin{eqnarray}\label{10-ineq}
   && 4(1-\sigma)p\int_{M_t}\frac{\tilde{f}_\sigma^{p-1}}{|H|^2
}|H||\nabla|H|||\nabla \tilde{f}_\sigma| d\mu_t \\
   &\leq& \frac{2p}{\mu}\int_{M_t}\tilde{f}_\sigma^{p-2}|\nabla
\tilde{f}_\sigma|^2d\mu_t+3p\mu\int_{M_t}\frac{\tilde{f}_\sigma^{p-1}}{|H|^{2(1-\sigma)}}|\nabla A|^2d\mu_t. \nonumber
\end{eqnarray}
Substituting (\ref{10-ineq}) to (\ref{9-ineq}),  letting
$\mu=\frac{4}{(p-1)}$ and
$p\geq\frac{12}{\epsilon_\nabla}+1$  we obtain
\begin{equation*}
\begin{split}
\frac{\partial}{\partial t}\int_{M_t}\tilde{f}_\sigma^pd\mu_t \leq&\
-\frac{p(p-1)}{2}\int_{M_t}\tilde{f}_\sigma^{p-2}|\nabla
\tilde{f}_\sigma|^2d\mu_t\\
&\ -p\epsilon_\nabla
\int_{M_t}\frac{\tilde{f}_{\sigma}^{p-1}}{|H|^{2(1-\sigma)}}|\nabla
A|^2d\mu_t\\
&\ +2\sigma p
\int_{M_t}|H|^2\tilde{f}_\sigma^pd\mu_t-8\bar{K}p\int_{M_t}\tilde{f}_\sigma^{p}d\mu_t.
\end{split}
\end{equation*}
This together with Lemma \ref{Poincare} implies
\begin{eqnarray}\label{11-ineq}
  \frac{\partial}{\partial t}\int_{M_t}\tilde{f}_\sigma^pd\mu_t &\leq& -p(p-1)\bigg(\frac{1}{2}-\frac{4\sigma
}{\eta \epsilon_2
}\bigg)\int_{M_t}\tilde{f}_\sigma^{p-2}|\nabla
\tilde{f}_\sigma|^2d\mu_t \\
   &&  -\bigg(p\epsilon_\nabla-\frac{2\sigma p(3p\eta+12)}{\epsilon_2}\bigg)
\int_{M_t}\frac{\tilde{f}_{\sigma}^{p-1}}{|H|^{2(1-\sigma)}}|\nabla
A|^2d\mu_t \nonumber\\
   && -8\bar{K}p\int_{M_t}\tilde{f}_\sigma^{p}d\mu_t. \nonumber
\end{eqnarray}
Now we pick
$\eta=\frac{8\sigma}{\epsilon_2}$ and
let $\sigma\leq
\min\big\{\frac{\epsilon_2\epsilon_\nabla}{32}
,\frac{\epsilon_2\sqrt{\epsilon_\nabla}}{8\sqrt{3p}}\big\}$.
Then (\ref{11-ineq}) reduces to
\begin{eqnarray*}
\frac{\partial}{\partial t}\int_{M_t}\tilde{f}_\sigma^pd\mu_t
\leq-8\bar{K}p\int_{M_t}\tilde{f}_\sigma^{p}d\mu_t.
\end{eqnarray*}
This implies
\begin{eqnarray}
\int_{M_t}\tilde{f}_\sigma^pd\mu_t\leq
e^{-8\bar{K}pt}\int_{M_{t_0}}\tilde{f}_\sigma^{p}d\mu_t.
\end{eqnarray}
 For $t\geq t_0$,
from Lemma 4.5 in \cite{LXYZ} we know that $T$ is finite and $\int_{M_t}\tilde{f}_\sigma^pd\mu_t$ is bounded.
\end{proof}

 Then we can proceed by a Stampacchia iteration procedure to bound $\tilde{f}_\sigma$ in $L^\infty$ and complete the proof of Proposition \ref{san2}.
Remember that $T$ is the maximum existence time of mean curvature flow, then we have the following gradient estimation.

\begin{proposition}[\cite{LXYZ} Theorem 5.1]
For every $\eta>0$, there exists a constant $C_\eta$ independent of $t$ such that for all $t\in [0,T)$, there holds
$$|\nabla H|^2\leq \eta |H|^4+C_\eta$$
\end{proposition}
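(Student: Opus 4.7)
The plan is to adapt the Huisken-type gradient estimate from \cite{H2}, as extended to higher codimension by Baker \cite{B} and to the hyperbolic setting by Liu--Xu--Ye--Zhao \cite{LXYZ}. The target is to bound $g_\sigma := |\nabla A|^2 / |H|^{2+\sigma}$ uniformly on $[0,T)$ for some small $\sigma > 0$. Once this is established, \eqref{eqn_gradient1} gives $|\nabla H|^2 \leq \tfrac{4}{3}|\nabla A|^2 \leq C|H|^{2+\sigma}$, and splitting into the regions $\{|H| \leq R\}$ and $\{|H| > R\}$ (or applying Young's inequality with exponents $p = 4/(2+\sigma)$ and $q = 4/(2-\sigma)$) converts this into $|\nabla H|^2 \leq \eta |H|^4 + C_\eta$.

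First I would derive the evolution of $|\nabla A|^2$ on $M_t \subset \mathbb{H}^4(\bar K)$ using Lemma \ref{ee} and the standard Ricci commutators in the space form. A direct computation yields
\begin{equation*}
\partial_t |\nabla A|^2 \leq \Delta |\nabla A|^2 - 2|\nabla^2 A|^2 + C\bigl(|A|^2 + |\bar K|\bigr)|\nabla A|^2,
\end{equation*}
since the cubic reaction terms of the schematic form $A \ast A \ast \nabla A \ast \nabla A$ are controlled by $|A|^2|\nabla A|^2$ and the ambient-curvature contribution carries a factor of $|\bar K|$. Applying the preserved pinching $|A|^2 \leq k|H|^2 + \beta \bar K$ from Proposition \ref{baochi} bounds this by $C(|H|^2+1)|\nabla A|^2$. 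Combining with the evolution of $|H|^{-(2+\sigma)}$ produces, modulo an admissible drift $V$,
\begin{equation*}
\partial_t g_\sigma \leq \Delta g_\sigma + \langle V, \nabla g_\sigma \rangle + C_1 \frac{|\nabla A|^2}{|H|^\sigma} - \frac{2|\nabla^2 A|^2}{|H|^{2+\sigma}}.
\end{equation*}
The dangerous absolute term $C_1 |\nabla A|^2 / |H|^\sigma$ cannot be discarded on its own; following \cite{H2,B}, I would pass to the auxiliary function $h := g_\sigma + N \tilde f_\sigma$ for a sufficiently large constant $N$ and exploit the strictly negative gradient contribution $-2\epsilon_\nabla|\nabla A|^2/|H|^{2(1-\sigma)}$ in the evolution of $\tilde f_\sigma$ (Lemma \ref{yanhua}). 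Since Proposition \ref{san2} supplies a uniform upper bound on $\tilde f_\sigma$, a uniform $L^\infty$ bound on $h$ via the parabolic maximum principle translates directly into a uniform bound on $g_\sigma$.

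The main obstacle is the behavior of $g_\sigma$ at points where $|H|$ is small or vanishes, where the weight is singular and the maximum principle is not directly applicable. I would resolve this by the standard regularization of replacing $|H|^{2+\sigma}$ by $(|H|^2+\varepsilon)^{1+\sigma/2}$, establishing $\varepsilon$-uniform bounds, and then sending $\varepsilon \to 0$. A secondary point is the negative ambient curvature $\bar K < 0$, which contributes reaction terms of the wrong sign that are not absorbable by pinching alone; however, the maximal existence time $T$ is finite in this setting (as invoked in the proof of Lemma \ref{lem1} via Lemma 4.5 of \cite{LXYZ}), so an allowed Gr\"onwall factor $e^{C t} \leq e^{C T}$ can be absorbed into the final constant $C_\eta$. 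This is precisely why the conclusion is a time-independent bound of the form $\eta|H|^4 + C_\eta$ on $[0,T)$ rather than the exponentially decaying bound available in the spherical case.
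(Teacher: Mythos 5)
The paper does not itself prove this proposition --- it cites Theorem~5.1 of \cite{LXYZ} (the hyperbolic analogue of Theorem~5.8 of \cite{B}, which is cited in Section~4) --- but your proposed reconstruction contains a scaling error that prevents the maximum-principle step from closing. The coupling $h := g_\sigma + N\tilde f_\sigma$, with $g_\sigma = |\nabla A|^2/|H|^{2+\sigma}$ for $\sigma$ small, cannot work as stated: the absolute term you need to absorb is $C_1|\nabla A|^2/|H|^\sigma$ (the reaction $\lesssim |A|^2|\nabla A|^2 \lesssim |H|^2|\nabla A|^2$ divided by $|H|^{2+\sigma}$), while the negative contribution supplied by $N\tilde f_\sigma$ via Lemma~\ref{yanhua} is $-2N\epsilon_\nabla|\nabla A|^2/|H|^{2(1-\sigma)}$. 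The ratio of bad to good is $|H|^{2-3\sigma}$, unbounded as $|H|\to\infty$ for $\sigma<2/3$ --- precisely the regime where the estimate is needed. The Gr\"onwall escape you offer does not repair it: the coefficient $|H|^2_{\max}(t)$ multiplying $g_\sigma$ in the bad term satisfies $|H|^2_{\max}\gtrsim (T-t)^{-1}$ near the singular time, so its integral over $[0,T)$ diverges even though $T<\infty$.

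The Huisken-style argument works at the weight $|H|^4$, not $|H|^{2+\sigma}$. One studies $|\nabla H|^2$ (unnormalized) together with an auxiliary function such as $N|H|^2\big(C_0|H|^{2-\delta}-|\mathring A|^2 - 2\gamma|K^\perp|\big)$, which is nonnegative by Proposition~\ref{san2} and whose evolution furnishes a negative term $\approx -N|H|^2|\nabla A|^2$ at the \emph{same} weight as the reaction; the maximum principle then gives $|\nabla H|^2\leq C|H|^{4-\delta}+C'$, and Young's inequality yields the stated bound. Two secondary remarks: (i) you discarded the favourable contribution of $R_2\geq\tfrac12|H|^4$ through $\partial_t|H|^2$, which enters $\partial_t g_\sigma$ at weight $-\tfrac{2+\sigma}{2}|\nabla A|^2/|H|^\sigma$, the same weight as your bad term, and should not be dropped before comparing constants; (ii) the $\varepsilon$-regularization at $|H|=0$ is superfluous here, since the preserved hyperbolic pinching \eqref{conh} together with $|A|^2\geq|H|^2/2$ forces $|H|^2\geq\beta|\bar K|/(k-\tfrac12)>0$ everywhere.
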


Now we can complete the proof of the theorem \ref {main}.
\begin{proof}[proof of theorem \ref {main}]
First, from Remark \ref{Zh} we know $K\geq \epsilon|H|^2$. Combing the gradient estimate we have $\frac{|H|_{max}}{|H|_{min}}\rightarrow 1$ which is known with the proof in \cite{H1}, so the diameter of $M_t$ tends to zero. By similar rescaling arguments as Theorem 6.1 in \cite{BN},  then we obtain the rescaled  mean curvature flow that converges to an umbilical  surface.
\end{proof}

In particularity, these convergence theorems in space forms also imply the differentiable sphere theorem.
\begin{corollary}
Let $F_0 :M\rightarrow \mathbb{F}^{4}(\bar K)$ be a compact surface immersed in the space form with $|H|^2+4\bar K>0 $. If $M$ satisfies
\begin{equation}|A|^2+2\gamma|K^\perp| \leq k|H|^2+\beta \bar K,
\end{equation}
where $\beta=\begin{cases}
            4k-2, \ & \bar{K}\geq0,\\
            4-\frac{2}{k}, \ & \bar{K}<0,
          \end{cases}$
 then $M$ is diffeomorphic to the unit-sphere.
\end{corollary}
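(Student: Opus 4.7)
The plan is to reduce the corollary directly to the convergence theorems of the previous sections, splitting on the sign of $\bar K$, and then to read off the diffeomorphism type of $M$ from the long-time behaviour of the mean curvature flow starting at $F_0$.

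First I would match the hypothesis with the correct convergence theorem. When $\bar K>0$ the pinching assumption is exactly the assumption of Theorem \ref{main2}, so the flow $F_t$ either contracts $M$ to a round point in finite time or converges as $t\to\infty$ to a totally geodesic sphere $\Sigma\cong\mathbb S^2$ sitting inside $\mathbb S^4(1/\sqrt{\bar K})$. When $\bar K<0$, Remark \ref{Z0} and Theorem \ref{main} apply (the assumption $|H|^2+4\bar K>0$ is used to make the pinched quantity $k|H|^2+\beta\bar K$ strictly positive and to permit the analogue of Lemma \ref{Z2} established in Remark \ref{Zh}), and the flow contracts $M$ to a round point in finite time. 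When $\bar K=0$ the hypothesis $|H|^2+4\bar K>0$ says $|H|_{\min}>0$, and Theorem \ref{C} together with Remark \ref{Z0}(i) again gives a round point.

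Next I would extract the diffeomorphism type. On the open interval of existence the maps $F_t:M\to\mathbb F^4(\bar K)$ are smooth embeddings depending smoothly on $t$, so $M$ is smoothly isotopic to $M_t$ for every such $t$. In the totally geodesic case the smooth convergence $F_t\to F_\infty$, combined with the fact that $\Sigma$ is diffeomorphic to the standard $\mathbb S^2$, produces for large $t$ a diffeomorphism between $M_t$ and $\mathbb S^2$, hence $M\cong\mathbb S^2$. In the round-point case one applies the standard Huisken-type rescaling $\widetilde F(\cdot,t)=(T-t)^{-1/2}\bigl(F(\cdot,t)-p\bigr)$, where $T$ is the maximal time and $p$ is the limit point; the pointwise estimates in Proposition \ref{san1} (respectively Proposition \ref{san2}) together with the gradient estimates quoted before the proofs of Theorems \ref{main2} and \ref{main} show that $\widetilde M_t$ converges smoothly to a unit round $\mathbb S^2$ as $t\nearrow T$. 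Thus $M\cong M_t\cong\widetilde M_t\cong\mathbb S^2$ for $t$ close to $T$.

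The one piece that needs care, rather than genuine difficulty, is the rescaling step: one must verify that the $C^\infty$ convergence of $\widetilde M_t$ to a round sphere (and not merely $C^0$ convergence of the surfaces to a point) really does yield a diffeomorphism with $\mathbb S^2$. This is the content of the rescaling and asymptotic roundness arguments already invoked at the end of the proofs of Theorems \ref{main2} and \ref{main}, so no new analytic input is needed here; the corollary follows by assembling those conclusions with the trivial observation that diffeomorphism type is preserved by smooth isotopy.
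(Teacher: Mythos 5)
Your proof is correct and follows the route the paper itself implicitly uses: the corollary is stated without proof immediately after the convergence theorems, and is obtained exactly by applying Theorem \ref{main2} for $\bar K>0$, Theorem \ref{C} together with Remark \ref{Z0}(i) for $\bar K=0$, and Theorem \ref{main} with Remark \ref{Z0}(ii) for $\bar K<0$, then reading off that a compact surface flowing smoothly (after Huisken-type rescaling in the round-point case) to a round $\mathbb{S}^2$, or converging to a totally geodesic $\mathbb{S}^2$, must itself be diffeomorphic to $\mathbb{S}^2$ since the abstract domain $M$ is fixed along the flow. One small imprecision: you phrase the final step in terms of the $F_t$ being ``smooth embeddings'' and $M_t$ being ``smoothly isotopic'' to $M$, but embeddedness is not generally preserved by mean curvature flow in codimension two and is not needed; what matters is that each $F_t$ is an immersion from the fixed compact surface $M$ and that the rescaled limit is a diffeomorphism onto a round sphere, which forces $M\cong\mathbb{S}^2$ (equivalently, one can simply note that the pinching gives $K\geq\epsilon|H|^2+\text{const}>0$, so Gauss--Bonnet plus orientability from the limit gives the same conclusion).
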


\subsection*{Acknowledgment}
This paper forms Chapter 4 of the first named author's doctoral dissertation \cite{Pu} at Zhejiang University.
He wishes to express his gratitude to Professor Fang Zheng  and Professor Hongwei Xu for their guidance during the preparation.

$$ \ \ \ \ $$

\end{document}